\newtheorem{definition}{Definition}
\newtheorem{lemma}{Lemma}
\newtheorem{theorem}{Theorem}
\newtheorem{corollary}{Corollary}
\newtheorem{proof}{Proof}
\newtheorem{remark}{Remark}
\newtheorem{condition}{Condition}
\providecommand{\keywords}[1]{\textit{\quad Key words }:  #1}
\begin{document}

\title{Sharp minimax optimality of LASSO and SLOPE under double sparsity assumption}

\author[1]{Zhifan Li}
\author[1]{Yanhang Zhang}
\author[1, 2]{Jianxin Yin}

\affil[1]{\footnotesize School of Statistics, Renmin University of China}
\affil[2]{\footnotesize Center for Applied Statistics and School of Statistics, Renmin University of China,
 	\texttt{jyin@ruc.edu.cn} }

\date{}
\maketitle \sloppy

\begin{abstract}
  This paper introduces a rigorous approach to establish the sharp minimax optimalities of both LASSO and SLOPE within the framework of double sparse structures, notably without relying on RIP-type conditions.
  Crucially, our findings illuminate that the achievement of these optimalities is fundamentally anchored in a sparse group normalization condition, complemented by several novel sparse group restricted eigenvalue (RE)-type conditions introduced in this study. We further provide a comprehensive comparative analysis of these eigenvalue conditions.
  Furthermore, we demonstrate that these conditions hold with high probability across a wide range of random matrices. Our exploration extends to encompass the random design, where we prove the random design properties and optimal sample complexity under both weak moment distribution and sub-Gaussian distribution.
  \end{abstract}

\keywords{ double sparsity, minimax optimality, restricted eigenvalue condition, random design.
}

\section{Introduction}

In the past few decades, sparsity has become a fundamental concept in modern statistical learning, particularly when dealing with a large number of covariates relative to the number of observations.
A common scenario involves assuming that only a small proportion of variables significantly influence the response variable, a concept well-studied as individual sparsity \cite{T1996, zhang2010, raskutti2011minimax, bellec2018slope}. However, recent real-world applications have revealed that sparsity patterns can exhibit more complex structures.
For instance, in some cases, the covariates have certain group structures, where groups of variables are either included entirely in the model or excluded altogether, commonly referred to as an "all-in or all-out" selection approach \cite{Y2006, huang2010, tsy2011, zhang2023}.

Beyond group sparsity, another complex sparsity structure called double sparse structure considers sparsity within a group. In this case, it is further assumed that within each selected group, only a small number of its constituent variables are active. This challenging problem is often referred to as sparse group selection and has garnered considerable attention in the fields of machine learning and statistics.
In their respective works, \cite{friedman2010note} and \cite{simon2013sparse} introduced a novel approach known as sparse group Lasso. This method addresses the challenge of sparse group selection by combining the Lasso penalty \cite{T1996} with the group Lasso penalty \cite{Y2006}.
The individual-level Lasso penalty is incorporated to encourage sparsity at the variable level, effectively selecting only a small subset of relevant variables. At the same time, the group Lasso penalty operates at a higher level, promoting sparsity among entire groups of variables, leading to a group selection behavior.

Numerous remarkable approaches have been developed to accelerate the convergence of sparse group Lasso \cite{ida2019fast, zhang2020efficient}. The study conducted by \cite{chatterjee2012sparse} explored the use of sparse group Lasso as a specialized method for investigating regularization with tree hierarchy. Additionally, several studies, including \cite{rao2013sparse}, have successfully applied sparse group Lasso in the context of multitasking learning.
Moreover, \cite{ahsen2017error} presented a comprehensive framework for analyzing error bounds of various techniques, encompassing Group Lasso, sparse group Lasso, and Group Lasso with tree overlap. Furthermore, to improve the consistency of variable selection, \cite{poignard2020asymptotic} introduced the adaptive sparse group Lasso method. These methods have demonstrated promising performance across various applications.
However, it is crucial to emphasize that a noticeable research gap exists concerning the theoretical guarantees of these methods, such as sample complexity and statistical accuracy.

\cite{cai2019sparse} initially established the minimax lower bounds for the estimation error in double sparse linear regression. Subsequently, \cite{li2022minimax} extended these findings to the $\ell_u(\ell_q)$-balls for $u,q \in [0,1]$.
Furthermore, \cite{cai2019sparse} demonstrated the sample complexity and minimax upper bounds for sparse group Lasso, revealing that the method achieves minimax optimality up to a logarithmic term. Notably, the technology employed in \cite{cai2019sparse} is based on the primal-dual witness (PDW) approach \cite{wainwright2009sharp, meinshausen2009lasso}, which plays a critical role in analyzing convex $M$-estimation problems. PDW has been applied in various studies, including logistic regression \cite{wainwright2006high}, Gaussian graphical models \cite{raskutti2008model}, and \cite{ravikumar2010high}.
It is important to note that the use of PDW technology relies on the incoherence condition. While non-correlated conditions are commonly employed in many regression studies, it is worth knowing that, at least within the linear regression framework, the incoherence condition is not necessary \cite{bickel2009simultaneous}.

In the context of high-dimensional linear regression, the restricted isometry property (RIP) \cite{candes2006robust, candes2007dantzig} or the restricted eigenvalue (RE) condition \cite{bickel2009simultaneous} serves as useful tools. \cite{buhlmann2011statistics} provides examples of design matrices that satisfy RIP conditions but do not meet non-dependent conditions. Additionally, \cite{raskutti2011minimax} has demonstrated that in high-dimensional linear regression, the RE condition is required even without considering algorithms solvable in polynomial time. This finding suggests that the RE or RIP condition is necessary for analyzing high-dimensional regression problems, while the incoherence condition may not be required.

The theoretical analysis of sparse group Lasso \cite{cai2019sparse} is challenging due to the RE condition. However, recent work by \cite{bellec2018slope} has provided inspiration by proving the minimax optimality of Lasso using RE conditions. Instead of approaching the problem from the perspective of convex optimization and support set recovery, \cite{bellec2018slope} establishes an upper bound for the convex function based on the oracle inequality of empirical Gaussian complexity, effectively controlling random errors using a "randomness removing tool."
Drawing from this idea and employing a mixed convex penalty formulation, we successfully demonstrated that a convex function composed of mixed penalty forms can also be obtained for the double sparse structures. Specifically, we identified a suitable convex penalty for sparse group Lasso, represented as $\lambda \|\cdot\|_1 + \lambda_g \|\cdot\|_{1, 2}$.

Notably, the work of \cite{bellec2018slope} addresses an open question regarding the minimax optimality of Lasso. While it was previously known that the estimation error of Lasso was minimax sub-optimal (i.e., $\Omega(\frac{s}{n}\log p)$ \cite{bickel2009simultaneous}), it remained uncertain whether sharp optimality could be achieved (i.e., improved to $\Omega(\frac{s}{n}\log \frac{p}{s})$) and under what matrix conditions such optimality would be attainable. \cite{bellec2018slope} answers both questions, presenting significant advancements in the proof techniques for Lasso. 
Taking inspiration by \cite{bellec2018slope}, we demonstrate that sparse group Lasso can achieve the sharp optimality under our proposed RE condition, surpassing the result of \cite{cai2019sparse}. 

Another key focus of \cite{bellec2018slope} is the minimax optimal results for Slope, a regression penalty method proposed by \cite{bogdan2015slope} that incorporates adaptive sparsity requirements. 

We establish the minimax property of Slope under the condition of a correlation matrix, employing a completely different technical approach than the previous proof under an orthogonal design matrix by \cite{su2016slope}. This work builds upon the profound theory of Slope initially proposed by \cite{abramovich2006adapting, abramovich2007optimality} in Gaussian model choice, and it further improves upon the constant order of the minimax result established by \cite{wu2013model}.
\subsection{Our contributions}
The main contributions of our work can be summarized as follows:
\begin{itemize}
  \item We present a novel approach to address the supremum of Gaussian random error under the double sparse parameter space. Without relying on RIP-type conditions, we derive the envelope function $N(u)$ using a sparse group normalization condition. This analysis reveals that the combination of normalization and restricted eigenvalue condition is sufficient for studying sparse group Lasso. 
  \item Leveraging this novel technique, we establish estimation upper bound corresponding to sparse group Slope. By constructing the envelop function $N(u)$, we can derive the non-increasing weighted tuning parameter for double sparse Slope. This method is different from previous analysis about Slope, i.e. \cite{bogdan2015slope, groupslope}, and can be extended to more complicated sparse structure. We also establish the minimax lower bound for double sparsity regression. Follow the previous idea, only condition \ref{sgnorm} is sufficient, which is more general than previous works \cite{cai2019sparse,li2022minimax}.
  \item Besides, we establish the theories under random design matrix. To be precise, we derive the sufficient sample size to provide normalization condition \ref{sgnorm} and eigenvalue condition \ref{ssgre}, \ref{wsgre}. We consider this part under two circumstances: the traditional sub-Gaussian random design and the original weak distribution condition followed by \cite{lecue2017sparse}.
\end{itemize}

\section{Statement of the problem}

\subsection{Model description}
We consider the linear regression problem, which is stated as
\begin{equation*}
	y = X\beta^*+\xi,
\end{equation*}
where $y \in \mathbb{R}^n$ is the response variable, $X \in \mathbb{R}^{n \times p}$ is the design matrix, $\beta^* \in \mathbb{R}^p$ is the underlying coefficient, and $\xi \in \mathbb{R}^n$ is the error term in which each entry draws independently from $\mathcal{N}(0,\sigma^2)$. 

In our settings, the $p$ variables can be divided into $m$ non-overlapping groups.
In particular, $G_j \subseteq [p]$ represents the index set of the $j$-th group and $\cup_{j \in [m]} G_j = [p]$.
Without loss of generality, we assume that each group has the same number of variables, that is, $|G_j| = d$ for each $j \in [m]$.
We say that $\beta$ is $(s,s_0)$-sparse if
\begin{equation*}
	\|\beta\|_{0,2} = \sum_{j=1}^{m}\mathbf{I}(\beta_{G_j}\ne 0) \le s\quad \text{and} \quad\|\beta\|_{0} = \sum_{i=1}^p\mathbf{I}(\beta_i\ne 0)\le ss_0.
\end{equation*}
Here $s$ and $s_0$ are two positive constants that control the sparsity across and within the groups, respectively.
Specifically, $s$ directly imposes a sparsity constraint at group level,
and $s_0$ can be interpreted as the average sparsity per group in the true groups. 
Therefore, $\mathcal{DS}(s,s_0)$ denotes the double sparse parameter space we are interested in:
$$
\mathcal{DS}(s,s_0):=\{\beta\in \mathbb{R}^p:\|\beta\|_{0,2}\le s,~\|\beta\|_{0}\le ss_0 \}.$$

A widely studied approach to sparse group selection is the sparse group Lasso\citep{friedman2010note,simon2013sparse,cai2019sparse}.
The minimization problem of sparse group Lasso is
\begin{equation}\label{sparse group Lasso}
	\hat{\beta} = \arg\min_{\beta\in \mathbb{R}^{p}} \|y-X\beta\|_n^2 + \lambda\|\beta\|_1 + \lambda_g\|\beta\|_{1,2},
\end{equation}
where $\lambda, \lambda_g > 0$ are the tuning parameters.
Here $\|\beta\|_1$ and $\|\beta\|_{1,2}$ correspond to the penalties of the ordinary Lasso \citep{T1996} and group Lasso \citep{Y2006}, respectively.
The specific forms of these norms are defined as
$$
\|\beta\|_{1} = \sum_{i=1}^{p}|\beta_i|\quad \text{and}\quad \|\beta\|_{1,2} = \sum_{j=1}^m \|\beta_{G_j}\|_2.
$$
To achieve the minimax optimality of $\hat \beta$, the relationship between $\lambda$ and $\lambda_g$ in \eqref{sparse group Lasso} should satisfy
$
\lambda_g \asymp \sqrt{s_0}\lambda.
$
Hence the penalty term of sparse group Lasso can be written as
$$
\lambda\left(\|\beta\|_1+\sqrt{s_0}\|\beta\|_{1,2}\right).
$$

\subsection{Notation and preliminaries}\label{notation}

For any positive integer $p$, we denote the set $\{1, 2, \ldots, p\}$ as $[p]$. For any $a, b \in \mathbb{R}$, we use $a \vee b = \max(a, b)$ and $a \wedge b = \min(a, b)$. The floor function $\left \lfloor a \right \rfloor$ is the largest integer no greater than $a$, and the ceiling function $\left \lceil a \right \rceil$  is the
smallest integer no less than $a$.We use the notation $a \precsim b$ to denote the existence of a constant $C$ independent of $n$ such that $a \le Cb$ holds uniformly, and correspondingly, we use $a \succsim b$. If $a \precsim b$ and $a \succsim b$ hold simultaneously, we denote $a \asymp b$.  
Given an $n$-dimensional vector $u$, we define $\|u\|_n^2 \coloneqq \frac{1}{n}\sum_{i=1}^n u_i^2$. 
Given index set $S$, $u_S \in \mathbb{R}^{|S|}$ represents the subvector of $u$ indexed by set $S$, and $X_S \in \mathbb{R}^{n\times |S|}$ represents the submatrix of $X$ indexed by set $S$.
For a matrix $U$, $\|U\|_{op}$ represents the spectral norm of $U$.

In order to analyze element-wise sparsity and group sparsity simultaneously, we use the notation "*" to denote sorting by either the absolute values of elements or the $\ell_2$ norm of vectors. Given a $p$-dimensional vector $u$, which can be arranged as a $d \times m$ matrix $U$, we define the following three types of sorting:

\begin{itemize}
	\item {\bf Element-wise sorting:} Sort all the absolute values of elements in $u$ in descending order. Let $u_{i^*}$ denote the $i$-th largest absolute value in $u$. The sorted vector with absolute values is denoted as $u^*$.
	\item {\bf Group-wise sorting:} Consider the $j$-th group of $u$, i.e., the $j$-th column of $U$. Sort the absolute values of each column in descending order, and the sorted matrix is denoted as $U^*$. For $j$-th column, let $U^*_{i,j}$ denote the $i$-th largest absolute value in the $j$-th column.
	\item {\bf Group sorting:} Given each column $U_j,j\in [m]$ of matrix $U$, sort them in descending order in terms of their $\ell_2$-norm, i.e., $\|U_j\|_2$. Let $\|U_{j^*}\|_2$ denote the $j$-th largest $\ell_2$-norm among these $m$ groups. 
\end{itemize}

To simplify the notations, denote $\phi = \frac{1}{\sqrt{n}}\xi^TX$ and $\Phi \in \mathbb{R}^{d \times m}$ as its matrix form. Denote $\Phi_S = \frac{1}{\sqrt{n}}\xi^T X_S$.
To further analyze the double sparse structure, we consider two types of index sets as follows:
\begin{itemize}
	\item Firstly, randomly select $s$ groups from $m$ groups. Then, choose $s \times s_0$ elements from the selected groups to form a subvector with dimensions $s \times s_0$. The collection of index sets corresponding to these subvectors is referred to as the set family $\mathbb{S}_1(s, s_0)$.
	\item Firstly, randomly select $s$ groups from $m$ groups. Then, choose $s_0$ elements from each selected group, resulting in a subvector with dimensions $s \times s_0$. The index sets corresponding to the subvectors obtained in this manner form a set family denoted by $\mathbb{S}_2(s, s_0)$.	Obviously, $\mathbb{S}_2(s,s_0) \subsetneq \mathbb{S}_1(s,s_0)$.
\end{itemize}

After introducing the notations and problems in Section \ref{notation}, we proceed to control the errors of the error terms in Section \ref{error}.
Building upon the results from Section \ref{error}, we derive the estimation error bounds for sparse group Lasso and sparse group SLOPE in Section \ref{sec:lasso} and Section \ref{sec:slope}, respectively.
Next, we extend our considerations to the case of random design in Section \ref{sec:random}.
Lastly, we discuss and interpret our results in Section \ref{discussion}.

\section{Bound on the stochastic errors}\label{error}
In this section, we consider to bound the stochastic errors based on a sequence of lemmas.
First, we present a sparse group normalization condition for the design matrix $X$, which is crucial for our study of sparse group penalty.
In Section \ref{sec:lower}, we establish the minimax lower bounds for the double sparse linear regression under sparse group normalization condition, 
and show that the upped bounds for sparse group Lasso and sparse group SLOPE are rate-optimal.

\begin{condition}[Sparse Group Normalization]\label{sgnorm}
For each $X_{G_j}$, given any subset $S\subseteq G_j$ with $|S|=s_0$, the submatrix $X_{S}\in \mathbb{R}^{n\times s_0}$ satisfies $\frac{1}{\sqrt{n}}\|X_{S}\|_{op}\le 1$. In other words,
$$
\sup_{j \in [m]}\sup_{|S|=s_0,S\subseteq G_j}\|X_{S}\|_{op}\le \sqrt{n}.
$$
\end{condition}
According to condition \ref{sgnorm}, given $S\subseteq G_j$ for some $j \in [m]$,  we obtain
$$
\|\Phi_{S}\|_{2}^{2}  = \frac{1}{n}\|\xi^TX_{S}\|_2^2\le 	\sup_{j \in [m]}\sup_{|S|=s_0,S\subseteq G_j}\frac{1}{n}\|X_{S}\|_{op}^2\cdot\sigma^2\chi^2_{s_0}.
$$
Therefore, we obtain
\begin{equation*}
	\|\Phi_{S}\|_{2}^{2} \overset{(L)}{\preceq} \sigma^2\chi_{s_0}^2,
\end{equation*}
which implies that for any given $t> 0 $, we have $\mathbf{P}(\frac{1}{\sigma^2}\|\Phi_{S}\|_{2}^{2} \ge t)\le \mathbf{P}(Z\ge t)$, where $Z \sim \chi_{s_0}^2$ is the $\chi^2$ random variable with $s_0$ degrees of freedom.

\begin{remark}
Sparse group normalization is inspired by the group normalization proposed in \cite{tsy2011}. 
When $s_0=d$, sparse group normalization condition is equivalent to the group normalization condition defined in \cite{tsy2011}. 
Therefore, condition \ref{sgnorm} is a less stringent assumption than group normalization.
On the other hand, it boils down to the traditional column normalization used in \cite{bellec2018slope,raskutti2011minimax} when $d = 1$.

Considering the optimal error bounds for group Lasso, \cite{tsy2011} made an important breakthrough by using $\chi^2$ distribution instead of Gaussian to tackle random errors . In the following proofs, the $\chi^2$ distribution of $s_0$ degrees is a key point for obtaining a minimax optimal bound for double sparse structure. Here we only need $\frac{1}{n}\|X_{S}\|_{op}^2$ be an absolute constant, which is assumed to be 1 without loss of generality.
\end{remark}

Assume that $s_0$, $d$, and $m$ are known fixed constants in the following discussion. For a given $s \in [m]$, we consider two forms of subvectors, which can be denoted as
\begin{equation*}
	\Psi_{1}(s) = \sup_{S\in \mathbb{S}_1(s,s_0)}\frac{1}{ss_0\sigma^2}\|\Phi_S\|_2^2	\\
\end{equation*}
and
\begin{equation*}
	\Psi_{2}(s) = \sup_{S\in \mathbb{S}_2(s,s_0)}\frac{1}{ss_0\sigma^2}\|\Phi_S\|_2^2.
\end{equation*}
Next, we derive the upper bound of $\Psi_{1}(s)$ and $\Psi_{2}(s)$ based on condition \ref{sgnorm}.
\begin{lemma}\label{phi}
Assume that condition \ref{sgnorm} holds.
Then, we have
	\begin{equation}\label{s1}
		\mathbf{P}\left\{\Psi_{1}(s) \ge \frac{16}{3}\left(\log\frac{2ed}{s_0}+ \frac{2}{s_0}\log \frac{4em}{s}\right)\right\} \le \frac{s}{4m},
	\end{equation}
	and
	\begin{equation}\label{s2}
		\mathbf{P}\left\{\Psi_{2}(s) \ge \frac{16}{3}\left(\log\frac{2ed}{s_0}+ \frac{2}{s_0}\log \frac{4em}{s}\right)\right\} \le \frac{s}{4m}.
	\end{equation}
\end{lemma}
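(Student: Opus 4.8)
The plan is to bound $\Psi_2$ and $\Psi_1$ by a peeling (layer‑cake) argument that replaces exponential/Chernoff control — which is not available here — by a first‑moment bound, thereby neutralizing the statistical dependence between groups. Since $\mathbb S_2(s,s_0)\subsetneq\mathbb S_1(s,s_0)$ forces $\Psi_2(s)\le\Psi_1(s)$, the estimate \eqref{s2} is a consequence of \eqref{s1}; nonetheless it is cleaner to explain the mechanism on \eqref{s2} first. For a group $j$ write $R_j:=\sup_{|S|=s_0,\,S\subseteq G_j}\sigma^{-2}\|\Phi_S\|_2^2$, so that every $S\in\mathbb S_2(s,s_0)$ splits over its $s$ active groups and $\Psi_2(s)=\frac1{ss_0}\sum_{k=1}^s R_{(k)}$, the normalized sum of the $s$ largest $R_j$'s. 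The deterministic inequality $\sum_{k=1}^s R_{(k)}\le s t_0+\sum_{j=1}^m (R_j-t_0)_+$, valid for every $t_0\ge0$, then yields, with $t_0=\tfrac12 s_0\tau$ and $\tau=\frac{16}{3}\!\left(\log\frac{2ed}{s_0}+\frac{2}{s_0}\log\frac{4em}{s}\right)$,
\begin{equation*}
\mathbf P\{\Psi_2(s)\ge\tau\}\le\mathbf P\Big\{\textstyle\sum_{j=1}^m (R_j-t_0)_+\ge\tfrac12 ss_0\tau\Big\}\le\frac{2m\,\mathbf E[(R_1-t_0)_+]}{ss_0\tau},
\end{equation*}
the last step being Markov's inequality together with linearity of expectation. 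This is the crucial move: linearity is indifferent to the fact that the $R_j$ are dependent through the common noise $\xi$.

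It remains to control the single expectation $\mathbf E[(R_1-t_0)_+]$. Using the $\binom{d}{s_0}$ subsets of $G_1$ and the domination $\sigma^{-2}\|\Phi_S\|_2^2\overset{(L)}{\preceq}\chi^2_{s_0}$ from Condition \ref{sgnorm}, a union bound gives $\mathbf P(R_1>w)\le\binom{d}{s_0}\mathbf P(\chi^2_{s_0}>w)$ for all $w$, whence $\mathbf E[(R_1-t_0)_+]\le\binom{d}{s_0}\,\mathbf E[(\chi^2_{s_0}-t_0)_+]$. Since $t_0=\tfrac12 s_0\tau$ is of order $s_0\log\frac{2ed}{s_0}+\log\frac{4em}{s}$, the tail $\mathbf P(\chi^2_{s_0}>t_0)\le (t_0/s_0)^{s_0/2}e^{-(t_0-s_0)/2}$ makes $\binom{d}{s_0}\mathbf E[(\chi^2_{s_0}-t_0)_+]$ smaller than $(2ed/s_0)^{-s_0/3}\,2^{-s_0}(4em/s)^{-8/3}$ up to a harmless polynomial factor; substituting this back makes the displayed ratio at most $s/(4m)$. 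The constant $\tfrac{16}{3}$ is exactly what leaves a $\tfrac43$‑multiple of $\log\frac{2ed}{s_0}$ and an $\tfrac83$‑multiple of $\log\frac{4em}{s}$ in the exponent, which is what overcomes the combinatorial factor $\binom{d}{s_0}$ and the prefactor $m$ while producing the sharp $\log(d/s_0)$ and $\log(m/s)$ rates rather than $\log d$ and $\log m$.

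For \eqref{s1} the same philosophy applies, but a set $S\in\mathbb S_1(s,s_0)$ may place more than $s_0$ indices in a single group, so $\Psi_1$ is not a top‑$s$ average of the $R_j$. I would first peel at the coordinate level: writing $c_i=\sigma^{-2}\Phi_i^2$ and $E_j=\sum_{i\in G_j}(c_i-t_0)_+$, the bound $\sup_{S\in\mathbb S_1}\sigma^{-2}\|\Phi_S\|_2^2\le ss_0 t_0+\sum_{k=1}^s E_{(k)}$ reduces matters to the top‑$s$ sum of the group excesses $E_j$, to which I apply a second peeling and Markov exactly as above. The genuinely new point — and the main obstacle — is the tail of $E_1$: when at most $s_0$ coordinates of $G_1$ exceed $t_0$ one has $E_1\le R_1\overset{(L)}{\preceq}\chi^2_{s_0}$ and the previous estimate applies, whereas the event that more than $s_0$ coordinates exceed $t_0$ is contained in $\{R_1\ge s_0 t_0\}$ and is therefore also governed by the $\chi^2_{s_0}$ tail, a crude full‑group bound $E_1\le\sigma^{-2}\|\Phi_{G_1}\|_2^2$ (whose eigenvalues are at most $\lceil d/s_0\rceil$) being enough to keep the resulting integral convergent on this rare event.

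Finally I would record why a more direct route fails, which motivates the whole design. Condition \ref{sgnorm} controls only the operator norm of size‑$s_0$ blocks inside a single group, so for a generic $S\in\mathbb S_2$ the pooled Gram matrix $\tfrac1n X_S^T X_S$ can have operator norm of order $s$; a Chernoff/union bound over all of $\mathbb S_1$ or $\mathbb S_2$ then loses a factor $s$ in the exponent (because $\log|\mathbb S_2|\asymp ss_0\log\frac{d}{s_0}+s\log\frac{m}{s}$ while the per‑set exponent degrades to order $s_0\tau$) and cannot reach the stated thresholds. Replacing exponential control by the first‑moment peeling above is precisely what circumvents both the unbounded pooled eigenvalues and the inter‑group dependence.
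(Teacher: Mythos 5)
Your treatment of \eqref{s2} is correct, and it is a genuinely different route from the paper's. The paper controls $\Psi_2(s)$ through its moment generating function: AM--GM (convexity) pulls the supremum over group choices outside the exponential, the within-group supremum costs a factor $\binom{d}{s_0}$ at the MGF level, and Chernoff with tilting $t=\tfrac38 s_0$ finishes. You instead use the order-statistics identity $\Psi_2(s)=\frac{1}{ss_0}\sum_{k=1}^{s}R_{(k)}$, the layer-cake bound $\sum_{k=1}^{s}R_{(k)}\le st_0+\sum_{j=1}^m(R_j-t_0)_+$, and Markov on first moments; the numbers close, since with $t_0=\tfrac12 s_0\tau$ one gets $\mathbf{E}[(R_j-t_0)_+]\le\binom{d}{s_0}\tfrac83 2^{s_0}e^{-3t_0/8}\le\tfrac83\bigl(\tfrac{4em}{s}\bigr)^{-2}$ and hence a final bound $\tfrac{s}{3e^2 m s_0\tau}\le\tfrac{s}{4m}$. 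One remark: your premise that Chernoff control ``is not available here'' because of inter-group dependence is mistaken --- the paper's proof is exactly a Chernoff argument, with dependence neutralized by AM--GM rather than by linearity of expectation; your first-moment device is an alternative, not a necessity.

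The genuine gap is in \eqref{s1}, which is the statement your whole plan rests on (you correctly reduce \eqref{s2} to it). Your two-level peeling spends $ss_0\cdot a\tau$ at the coordinate level and $s\cdot bs_0\tau$ at the group level, so Markov requires $a+b<1$ with denominator $(1-a-b)ss_0\tau$. But in your case 1 ($\le s_0$ exceedances, $E_j\le R_j$), making $m\,\mathbf{E}[(R_j-bs_0\tau)_+]$ small enough forces $\tfrac38 bs_0\tau\ge s_0\log\tfrac{2ed}{s_0}+2\log\tfrac{4em}{s}=\tfrac{3}{16}s_0\tau$ up to additive constants, i.e.\ $b\ge\tfrac12$ (any fixed $b<\tfrac12$ fails for large $m/s$, since $(\tfrac{4em}{s})^{2-4b}$ cannot be absorbed by the factor $s_0\tau$). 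Symmetrically, in case 2 the event $C_2$ is only controlled through $\mathbf{P}(R_j\ge s_0 a\tau)\le\binom{d}{s_0}2^{s_0}e^{-\frac38 as_0\tau}$, and your crude bound $E_j\le Q_j:=\sigma^{-2}\|\Phi_{G_j}\|_2^2$ has $\|Q_j\|_{L_q}\ge\mathbf{E}[Q_j]=\tfrac1n\|X_{G_j}\|_F^2\asymp d$ in the worst case, so any H\"older step gives $\mathbf{E}[E_j\mathbf{1}_{C_2}]\lesssim d\cdot\mathbf{P}(C_2)^{1-1/q}$; beating both $(\tfrac{m}{s})^{2}$ and the extra factor $d$ forces $a>\tfrac12$ as well (for $s_0=1$ it forces $a-\tfrac12\gtrsim\tfrac{\log d}{2\log(2ed)}$, which tends to $\tfrac12$). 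Thus the requirements $a\ge\tfrac12$, $b\ge\tfrac12$, $a+b<1$ are jointly contradictory: the argument cannot reach the constant $\tfrac{16}{3}$, and the case-2 ``convergent integral'' claim is false in a concrete regime --- take $s_0=1$, $m\asymp s$, $d\to\infty$, where $\mathbb{S}_1$ allows all $s$ indices to sit in one group, $\mathbf{E}[Q_j\mathbf{1}_{C_2}]\gtrsim\sqrt{d}$ while the available budget is only of order $\log d$, so no choice of thresholds repairs it.

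For comparison, the paper handles $\mathbb{S}_1$ by a combinatorial covering step: any $S\in\mathbb{S}_1(s,s_0)$ can be partitioned into at most $2s$ within-group blocks of size at most $s_0$ (via $\sum_j(\lfloor a_j/s_0\rfloor+1)\le 2s$), after which the same MGF argument runs over the block family $\mathcal{F}$ with $|\mathcal{F}|=m\binom{d}{s_0}$ and a halved Chernoff exponent --- that factor of $2$ is precisely why the lemma's constant is $\tfrac{16}{3}$ rather than the $\tfrac83$ attainable for $\Psi_2$. Your first-moment method can be salvaged for \eqref{s1} by importing exactly this covering: peel the block variables $T_F=\sigma^{-2}\|\Phi_F\|_2^2$, $F\in\mathcal{F}$, at threshold $t_0=\tfrac12 s_0\tau-1$, so that $\Psi_1(s)\le\frac{1}{ss_0}\bigl[2st_0+\sum_{F}(T_F-t_0)_+\bigr]$ and Markov leaves denominator $2s$; the additive slack coming from the constants $2e$ and $4e$ inside the logarithms then closes the bound. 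But this fix replaces your coordinate-then-group peeling and its case analysis, which, as written, does not work.
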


Similar bounds have been obtained by \cite{zhang2023minimax}. 
However, \cite{zhang2023minimax} derived upper bounds under a more stringent condition, namely, the double sparse RIP condition, which results in a tighter probability tail bound. In our paper, we choose to relax this assumption and solely utilize the sparse group normalization condition.

Based on the random matrix $\Phi$ and $s$, we define two random variables $\upsilon_s$ and $\Upsilon_s$, which are corresponding to $\mathbb{S}_1(s,s_0)$ and $\mathbb{S}_2(s,s_0)$ respectively:

\begin{itemize}
	\item By the rule of $\mathbb{S}_1(s,s_0)$, we obtain $ss_0$ elements each time, and we only focus on the $ss_0$-th largest one. $\upsilon_s$ denotes the largest $ss_0$-th element of $\Phi_S$ for any $ S \in \mathbb{S}_1(s,s_0)$. Consequently, we have $\upsilon_s^2 \le \Psi_{1}(s)$.
	\item By the rule of $\mathbb{S}_2(s,s_0)$, we obtain $s$ groups each time, and we only focus on the $s$-th largest group with $\ell_2$-norm. $\Upsilon_s$ denotes the largest $s$-th group of $\Phi_S$ for any $ S \in \mathbb{S}_2(s,s_0)$, so that we have $\Upsilon_s^2 \le s_0\Psi_{2}(s)$.
\end{itemize}

Now we present the upper bounds for the random variables $\Upsilon_s$ and $\upsilon_s$, which hold uniformly over $s\in[m]$.
Define
\begin{equation*}
	\lambda_{j}  = \sigma\sqrt{\log\frac{2ed}{s_0}+ \frac{2}{s_0}\log \frac{4em}{j}},\quad j \in [m].
\end{equation*}

\begin{lemma}\label{lem42}
	Assume that condition \ref{sgnorm} holds.
	Define event
	$$
	\Omega \triangleq \left\{\max_{j\in [m]}\Upsilon_j\le 4\sqrt{s_0}\lambda_j\right\}\cap \left\{\max_{j\in[m]}\upsilon_j\le 4\lambda_j\right\}.
	$$
	Then, we have
	$$
	\mathbf{P}(\Omega)\ge \frac{1}{2}.
	$$
\end{lemma}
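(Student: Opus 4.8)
The plan is to transfer the two order-statistic quantities $\upsilon_j,\Upsilon_j$ back to the normalized sup-quantities $\Psi_1(j),\Psi_2(j)$ that were already analyzed in Lemma \ref{phi}, and then to control all $j\in[m]$ at once by a union bound whose summands decay geometrically in $j$. Writing $a_j=\log\frac{2ed}{s_0}+\frac{2}{s_0}\log\frac{4em}{j}$ so that $\lambda_j^2=\sigma^2a_j$, the defining relations $\upsilon_j^2\le\sigma^2\Psi_1(j)$ and $\Upsilon_j^2\le s_0\sigma^2\Psi_2(j)$ yield the inclusions
\begin{equation*}
\{\upsilon_j>4\lambda_j\}\subseteq\{\Psi_1(j)>16a_j\},\qquad \{\Upsilon_j>4\sqrt{s_0}\lambda_j\}\subseteq\{\Psi_2(j)>16a_j\}.
\end{equation*}
Consequently $\Omega^c\subseteq\bigcup_{j\in[m]}\big(\{\Psi_1(j)>16a_j\}\cup\{\Psi_2(j)>16a_j\}\big)$, and it suffices to show that $\sum_{j\in[m]}\mathbf{P}(\Psi_1(j)>16a_j)\le\frac14$ together with the same estimate for $\Psi_2$.

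The delicate point is the per-$j$ tail. Invoking Lemma \ref{phi} as a black box at the level $16a_j$ gives only $\mathbf{P}(\Psi_1(j)>16a_j)\le\mathbf{P}(\Psi_1(j)\ge\frac{16}{3}a_j)\le\frac{j}{4m}$, and $\sum_{j\in[m]}\frac{j}{4m}\asymp m$ is hopelessly large; the entire reason for taking the constant $4$ (equivalently the threshold $16a_j=3\cdot\frac{16}{3}a_j$) is to build in enough slack to make the tail summable. I therefore re-run the argument underlying Lemma \ref{phi} at this inflated threshold: I bound the cardinality $|\mathbb{S}_1(j,s_0)|\le\binom{m}{j}\binom{jd}{js_0}\le(em/j)^j(ed/s_0)^{js_0}$ and combine it, via a union bound, with the sub-exponential tail of $\|\Phi_S\|_2^2$ supplied by Condition \ref{sgnorm} (the $\chi^2$ stochastic-dominance displayed just after the condition, applied block-by-block to the $\le j$ group-pieces of $S$). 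Since the resulting $\chi^2$-exponent is of order $\tfrac{js_0}{2}\cdot 16a_j$ while the log-cardinality is only of order $j\log\frac{em}{j}+js_0\log\frac{ed}{s_0}$, and since $a_j$ is calibrated precisely to these two entropy terms, the exponent dominates with a fixed multiplicative margin. This produces $\mathbf{P}(\Psi_1(j)>16a_j)\le\rho^{\,j}$ for an absolute constant $\rho<1$, and the identical bound for $\Psi_2(j)$, whose index family $\mathbb{S}_2(j,s_0)\subsetneq\mathbb{S}_1(j,s_0)$ is even smaller.

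Summing the two geometric series over $j\in[m]$ and tracking $\rho$ so that each sum is at most $\frac14$ gives $\mathbf{P}(\Omega^c)\le\frac12$, that is, $\mathbf{P}(\Omega)\ge\frac12$. I expect the main obstacle to be two-fold. (i) The naive use of Lemma \ref{phi} is not summable, so the crux is to quantify the geometric rate at the inflated threshold and to verify that \emph{each} of the two sums is genuinely bounded by $\frac14$; this is exactly where the constants must be reconciled (the $4$ in the statement versus the critical multiple $\tfrac{16}{3}$ of Lemma \ref{phi}). (ii) One must justify the $\chi^2_{js_0}$-type stochastic dominance for $\|\Phi_S\|_2^2$ when $S$ straddles several groups, since Condition \ref{sgnorm} controls only single-group blocks of size $s_0$; this is handled by decomposing $S$ into its group intersections, exactly as in the proof of Lemma \ref{phi}.
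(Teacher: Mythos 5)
Your reduction of $\Omega^c$ to the events $\{\Psi_1(j)>16a_j\}$ and $\{\Psi_2(j)>16a_j\}$ is fine (and matches the paper's first step), but the per-$j$ tail bound $\mathbf{P}\bigl(\Psi_1(j)>16a_j\bigr)\le\rho^{\,j}$ on which your whole argument hinges cannot be derived from Condition \ref{sgnorm}, and is in fact false. The geometric rate requires a $\chi^2$-exponent of order $js_0a_j$, i.e.\ it requires $\|\Phi_S\|_2^2$ to behave like a $\chi^2_{js_0}$ variable when $S$ spans $j$ groups. But the group blocks of $\Phi_S$ all involve the \emph{same} noise vector $\xi$, hence are dependent, and Condition \ref{sgnorm} gives no useful control of $\|X_S\|_{op}$ across groups (only $\|X_S\|_{op}\le\sqrt{jn}$ follows, which is worthless here). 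The block decomposition in the proof of Lemma \ref{phi} does not rescue this: there the dependence is handled by convexity of the exponential, $\exp\bigl(\frac{1}{j}\sum_{k\le j}\frac{t}{s_0}Y_k\bigr)\le\frac{1}{j}\sum_{k\le j}\exp\bigl(\frac{t}{s_0}Y_k\bigr)$, which collapses the Chernoff exponent to order $s_0a$ rather than $js_0a$ --- this is exactly why the tail in Lemma \ref{phi} is $j/(4m)$, polynomial in $j/m$ rather than geometric in $j$. A concrete counterexample: take all $m$ groups identical, $X_{G_1}=\cdots=X_{G_m}$, with orthogonal columns of norm $\sqrt{n}$, so that Condition \ref{sgnorm} holds. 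Choosing the same $s_0$-subset inside each of $j$ groups shows $\Psi_2(j)\ge\chi^2_{s_0}/s_0$ stochastically for \emph{every} $j$, whence $\mathbf{P}\bigl(\Psi_2(j)>16a_j\bigr)\ge c(d,s_0)\,\bigl(j/(4em)\bigr)^{C}$ for constants depending only on $d,s_0$ and the threshold multiple. At $j\asymp m$ this is a constant, not $\rho^{\,j}$, and $\sum_{j\in[m]}(j/m)^{C}\asymp m$ diverges, so no constant inflation of the threshold can make a union bound over \emph{all} $j\in[m]$ close.

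The paper's proof circumvents this with two ingredients your proposal does not use. First, monotonicity: both $j\mapsto\Upsilon_j$ and $j\mapsto\upsilon_j$ are non-increasing ($\Upsilon_j$ trivially; for $\upsilon_j$ this needs the two-case argument given in the paper). Second, dyadic peeling: Lemma \ref{phi} is invoked only at the indices $j=2^l$, $l=0,\dots,q-1$ with $2^q\le m$, so the union bound costs $\sum_{l<q}2^l/(4m)\le 1/4$ for each of the two families; the bound is then transported from $2^l$ to every $j\in(2^l,2^{l+1})$ by monotonicity, with the slack between the constant $4/\sqrt{3}$ coming from Lemma \ref{phi} and the constant $4$ in the statement absorbing the replacement of $\lambda_{2^l}$ by $\lambda_j$. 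This peeling-plus-monotonicity step is the missing idea in your write-up; the constant bookkeeping you flagged in (i) is not the real obstruction, and the dependence issue you flagged in (ii) is fatal to your route rather than a technicality.
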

Lemma \ref{lem42} demonstrates that the maximum values of random variables $\Upsilon_s$ and $\upsilon_s$ can be upper-bounded by $\lambda_j$ with large probability.
We extend the sequence $\{\lambda_j\}_{j=1}^m$ defined above to $\{\tilde{\lambda}_i\}_{i=1}^{p}$ as follows: 

\begin{definition}
	Replicate $\lambda_j$ for $s_0$ times, and fill the remaining dimensions with $\lambda_m$ to combine into a $p$-dimensional vector.
	In specific,
	$$
	\tilde{\lambda}_{i} \coloneqq \left\{\begin{array}{ll}
	    \lambda_{\left \lceil \frac{i}{s_0}\right \rceil}, & i\le s_0 \times m,\\
	    \lambda_m, & i > s_0 \times m.
	\end{array}\right.
	$$
\end{definition}
 With this definition, we further define the positive homogeneous function $N(u)$ as follows:
\begin{equation}\label{Nu}
	N(u) =\frac{1}{\sqrt{n}}\left\{\sum_{j=1}^m\|U_{j^*}\|_2\sqrt{s_0}\lambda_j+\sum_{i=1}^pu_{i^*}\tilde{\lambda}_i\right\},
\end{equation}
where $u \in \mathbb{R}^{p}$ and $U \in \mathbb{R}^{d\times m}$ is the matrix form of $u$.

\begin{theorem}\label{gauss2}
		Consider the upper bound of the following Gaussian process given parameter space $N(u)\le 1:$
	\begin{equation}\label{gauss}
		\sup_{u \in \mathbb{R}^p:N(u)\le 1}\left|\frac{1}{n}\xi^TXu\right|.
	\end{equation}
	Under condition \ref{sgnorm}, if event $\Omega$ defined in Lemma \ref{lem42} satisfies $\mathbf{P}(\Omega)\ge \frac{1}{2}$, we have
		$$
	\sup_{u \in \mathbb{R}^p:N(u)\le 1}\left|\frac{1}{n}\xi^TXu\right| \le 4.
	$$
\end{theorem}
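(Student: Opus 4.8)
The plan is to fix a realization in the event $\Omega$ and prove the deterministic bound
$$
\langle\phi,u\rangle\le 4\,\bar N(u)\quad\text{for every }u\in\mathbb{R}^p,\qquad \bar N(u):=\sqrt{n}\,N(u)=\sum_{j=1}^m\|U_{j^*}\|_2\sqrt{s_0}\lambda_j+\sum_{i=1}^pu_{i^*}\tilde\lambda_i .
$$
This suffices: since $\frac1n\xi^TXu=\frac1{\sqrt n}\langle\phi,u\rangle$, for any $u$ with $N(u)\le1$ the inequality gives $\frac1{\sqrt n}|\langle\phi,u\rangle|\le \frac{4}{\sqrt n}\bar N(u)=4N(u)\le 4$, and replacing $u$ by $-u$ (using that $\bar N$ is even) controls the absolute value in \eqref{gauss}. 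By Lemma \ref{lem42} the event $\Omega$ has probability at least $\tfrac12$, so the claim is exactly the stated theorem. From here on everything is deterministic, using only the two inclusions $\Upsilon_j\le 4\sqrt{s_0}\lambda_j$ and $\upsilon_j\le 4\lambda_j$ guaranteed on $\Omega$.

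\textbf{Group term (the ``top'' part).}
I would expand $\langle\phi,u\rangle=\sum_{j=1}^m\langle\Phi_{G_j},U_{G_j}\rangle$ and apply the rearrangement inequality inside each group to obtain $\langle\Phi_{G_j},U_{G_j}\rangle\le\sum_{i=1}^d\Phi^*_{i,j}U^*_{i,j}$, then split each group's sum at the within-group rank $s_0$. For the block $i\le s_0$, Cauchy--Schwarz yields $\sum_{i=1}^{s_0}\Phi^*_{i,j}U^*_{i,j}\le\tau_j\|U_{G_j}\|_2$, where $\tau_j$ denotes the $\ell_2$-norm of the $s_0$ largest entries of $\Phi_{G_j}$. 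Summing over $j$ and rearranging across groups (co-sorting $\tau_j$ and $\|U_{G_j}\|_2$) gives $\sum_j\tau_j\|U_{G_j}\|_2\le\sum_j\tau_{(j)}\|U_{j^*}\|_2$, and $\tau_{(j)}$ is precisely the variable $\Upsilon_j$. On $\Omega$ this is at most $4\sum_{j=1}^m\sqrt{s_0}\lambda_j\|U_{j^*}\|_2$, i.e. four times the first term of $\bar N$.

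\textbf{Element term (the ``tail'' part) and the main obstacle.}
It remains to bound the within-group tail $\sum_{j}\sum_{i>s_0}\Phi^*_{i,j}U^*_{i,j}$ by the element term of $\bar N$. A further rearrangement over all pairs $(j,i)$ with $i>s_0$ produces $\sum_l a_l b_l$, where $a_l$ and $b_l$ are the $l$-th largest among the tail entries of $\Phi$ and of $u$ respectively; since the tail entries of $u$ are a sub-collection of all its entries, $b_l\le u_{l^*}$. The decisive step is the pointwise order-statistic bound $a_l\le 4\tilde\lambda_l=4\lambda_{\lceil l/s_0\rceil}$, and I expect this to be the crux of the whole argument, because a direct Cauchy--Schwarz estimate on the tail inflates the bound by a spurious factor $\sqrt{s_0}$ and fails. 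Instead I would argue combinatorially: writing $v=a_l$ and $t=\lceil l/s_0\rceil$, each group carrying one of the top $l$ tail entries $\ge v$ also contains its own $s_0$ leading entries, all $\ge v$; a counting argument over the (at most $l$) such groups then exhibits a family of $t$ groups holding at least $ts_0$ entries $\ge v$, whence $v\le\upsilon_t\le 4\lambda_t$ on $\Omega$. This converts a global element-level order statistic into the group-constrained quantity $\upsilon_t$, which is exactly why both $\Upsilon$ and $\upsilon$ (equivalently, both penalty terms) are needed. Consequently the tail is at most $4\sum_l\tilde\lambda_l b_l\le 4\sum_{i=1}^p\tilde\lambda_i u_{i^*}$, four times the second term of $\bar N$. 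Adding the group and element contributions gives $\langle\phi,u\rangle\le 4\bar N(u)$, as required.
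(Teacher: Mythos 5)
Your proposal is correct and follows essentially the same route as the paper's proof: the same reduction to a deterministic bound on the event $\Omega$, the same within-column rearrangement, the same split at within-group rank $s_0$, Cauchy--Schwarz plus cross-group rearrangement pairing $\|U_{j^*}\|_2$ with $\Upsilon_j\le 4\sqrt{s_0}\lambda_j$ for the head part, and control of the tail order statistics through $\upsilon_t\le 4\lambda_t$. The only cosmetic difference is that you justify the key claim $a_l\le\upsilon_{\lceil l/s_0\rceil}$ by a direct counting argument over the groups containing the top $l$ tail entries, whereas the paper builds a greedy partition of the tail into $s_0$-blocks and argues by induction; both encode the same observation (with the same small caveat that the index must be capped at $m$, i.e. one uses $\upsilon_{t\wedge m}$ and $\tilde\lambda_l=\lambda_m$, when $l>ms_0$).
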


\begin{remark}
The key to tackling the complexity the double sparse structure lies in Theorem \ref{gauss2}.
	In the proof of ordinary Lasso or Slope presented in \cite{bellec2018slope}, parameter sorting is straightforward. 
	However, in our case, we must carefully apply the three sorting rules defined in Section \ref{notation} and subsequently concatenate them. 
 
To capture the double sparse structure, we introduce the "tools" $\mathbb{S}_1$ and 
 $\mathbb{S}_2$ in the context of combination. In the proof of Lemma \ref{phi}, we leverage the moment generating function result of $\chi^2(s_0)$, drawing inspiration from \cite{tsy2011} in the framework of group sparsity problems.

Notably, the function $N(u)$ plays a role similar to that in \cite{bellec2018slope}, but to obtain our results requires more sophisticated and intricate techniques, building upon the foundation provided by Lemma \ref{lem42}.
		\end{remark}

We can now present the main concentration inequality for convex optimization under double sparse structure. The following theorem is similar to that of \cite{bellec2018slope} and can be obtained on the basis of Theorem \ref{gauss2} above:

\begin{theorem}\label{gauss3}
	Denote $N(u):\mathbb{R}^{p}\to [0,\infty)$ as a positive homogeneous function, that is, $N(au) = aN(u),\forall u> 0$ and $N(u)> 0, \forall u\ne 0$. For all $\delta_0\in (0,1)$, if the event
	$$
	\Omega_2 \triangleq \left\{\sup_{u \in \mathbb{R}^p:N(u)\le 1}\left|\frac{1}{n}\xi^TXu\right|\le 4\right\}
	$$
	satifies that $P(\Omega_2)\ge \frac{1}{2}$, we have 
	\begin{equation}\label{nu}
		\mathbf{P}\left\{\forall u\in \mathbb{R}^{p}: \frac{1}{n}\xi^TXu\le(4+\sqrt{2})\max\left(N(u),\|Xu\|_n\sigma\sqrt{\frac{\log \frac{1}{\delta_0}}{n}}\right)\right\}\ge 1-\frac{\delta_0}{2}.
	\end{equation}
\end{theorem}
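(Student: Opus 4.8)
The plan is to follow the template of \cite{bellec2018slope}: reduce the ``for all $u$'' statement to the control of a single restricted supremum via positive homogeneity, and then apply Gaussian concentration. Write $r \triangleq \sigma\sqrt{\log(1/\delta_0)/n}$ and define the restricted supremum
$$
W \triangleq \sup_{u\in\mathbb{R}^p:\,N(u)\le 1,\ \|Xu\|_n\le 1/r}\frac{1}{n}\xi^TXu .
$$
First I would show that the event $\{W\le 4+\sqrt{2}\}$ already implies the displayed conclusion \eqref{nu}. Indeed, for any $u\ne 0$ set $M \triangleq \max(N(u),r\|Xu\|_n)>0$ and $v \triangleq u/M$; positive homogeneity of $N$ and linearity of $X$ give $N(v)=N(u)/M\le 1$ and $\|Xv\|_n=\|Xu\|_n/M\le 1/r$, so $v$ is feasible for $W$ and $\frac1n\xi^TXv\le W\le 4+\sqrt2$. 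Multiplying back by $M$ yields $\frac1n\xi^TXu\le (4+\sqrt2)\max(N(u),r\|Xu\|_n)$, and the case $u=0$ is trivial. Hence it suffices to prove $\mathbf{P}(W\le 4+\sqrt2)\ge 1-\delta_0/2$.

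The estimate on $W$ rests on three observations. (i) As a supremum of the linear maps $\xi\mapsto \frac1n\xi^TXv$ over the feasible set, $W$ is a Lipschitz function of $\xi$: each gradient $\frac1n Xv$ has Euclidean norm $\frac{1}{\sqrt n}\|Xv\|_n\le \frac{1}{r\sqrt n}$, so $W$ is $L$-Lipschitz with $L=\frac{1}{r\sqrt n}$. (ii) Since $\xi\sim\mathcal N(0,\sigma^2 I_n)$, the Gaussian concentration (Borell--TIS) inequality around the median gives $\mathbf{P}(W\ge \mathrm{Med}(W)+t)\le \frac12\exp\!\big(-\frac{t^2}{2L^2\sigma^2}\big)$ for every $t>0$. (iii) Because the feasible set of $W$ lies inside $\{N(u)\le 1\}$, on the event $\Omega_2$ we have $W\le \sup_{N(u)\le1}\big|\frac1n\xi^TXu\big|\le 4$; as $\mathbf{P}(\Omega_2)\ge\frac12$ this forces $\mathrm{Med}(W)\le 4$.

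Combining these, $L^2\sigma^2 = \sigma^2/(r^2 n)=1/\log(1/\delta_0)$, so the exponent becomes $-t^2\log(1/\delta_0)/2$; choosing $t=\sqrt2$ gives $\mathbf{P}(W\ge 4+\sqrt2)\le \frac12\exp(-\log(1/\delta_0))=\delta_0/2$, which is exactly the required bound. The delicate point I expect to be the crux is the calibration of the radius $1/r$: it is chosen precisely so that the Lipschitz constant $L=1/(r\sqrt n)$ cancels the $\sigma^2$ and the factor $n$ and turns the Gaussian tail into a clean power of $\delta_0$, which is what lets a \emph{single} scale suffice rather than a dyadic peeling over $\|Xu\|_n$. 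The one nontrivial verification is step (iii), namely that the median bound $\mathrm{Med}(W)\le 4$ transfers correctly from the two-sided event $\Omega_2$ to the one-sided restricted supremum $W$; everything else is the standard concentration-of-a-Lipschitz-functional argument.
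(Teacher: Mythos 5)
Your proposal is correct and follows essentially the same route as the paper's proof: your feasible set $\{N(u)\le 1,\ \|Xu\|_n\le 1/r\}$ is exactly the paper's set $T$ (with $L=1/r$), and the Lipschitz bound, the Gaussian concentration around the median, the bound $\mathrm{Med}(W)\le 4$ via $\mathbf{P}(\Omega_2)\ge\tfrac12$, and the homogeneity rescaling all coincide with the paper's argument. The only cosmetic difference is that you spell out the homogeneity reduction and the choice $t=\sqrt{2}$ explicitly, where the paper states the reduction in one line and writes the deviation term as $\sigma L\sqrt{2\log(1/\delta_0)/n}=\sqrt{2}$.
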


\section{Main results for sparse group Lasso}\label{sec:lasso}

In this section, our goal is to investigate the statistical properties of the estimator $\hat{\beta}$ of sparse group Lasso. 
We establish the upper bounds of estimation error for $\hat \beta$ under our proposed RE-type conditions.

Before formally stating our results, two useful lemmas are presented in the following part. Lemma \ref{lemma:sglasso2} is basic inequality for double sparsity and Lemma \ref{lemma:sglasso} can be applied to universal convex penalized linear regression problems.
Given a $p$-dimensional positive non-increasing sequence $\{\tilde{\lambda}_i\}_{i=1}^p$ such that $\tilde{\lambda}_i\ge \tilde{\lambda}_{i+1}$ for all $i \in [p-1]$. Then, for a $p$-dimensional vector $\beta$, we define
	\begin{equation}\label{Slope1}
		\|\beta\|_{\tilde{\lambda}^*} = \sum_{i=1}^p\tilde{\lambda}_i\beta_{i^*}.
	\end{equation}
	Given a $m$-dimensional positive non-increasing sequence  $\lambda$, we define 
	\begin{equation}\label{Slope2}
		\|\beta\|_{G,\lambda^*}  = \sum_{j=1}^m\lambda_j\|\beta_{G_{j^*}}\|_2,
	\end{equation}
where $\|\beta_{G_{j^*}}\|$ denotes the $j$-th largest $\ell_2$-norm of $\|\beta_{G_j}\|$.

\begin{lemma}\label{lemma:sglasso2}
Let $s\in [m]$ and $s_0 \in [d]$. For any two estimators $\beta, \hat{\beta} \in \mathbb{R}^p$, let $u=\hat\beta-\beta$. If $\|\beta\|_0 \le ss_0$,  we have
\begin{equation}
	\|\beta\|_{\tilde{\lambda}^*}-\|\hat{\beta}\|_{\tilde{\lambda}^*}\le \left(\sum_{i=1}^{ss_0}\tilde{\lambda}_i^2\right)^{\frac{1}{2}}\|u\|_2-\sum_{i=ss_0+1}^p\tilde{\lambda}_iu_{i^*}.
\end{equation}
If $\|\beta\|_{0,2}\le s$, we have
\begin{equation}
	\|\beta\|_{G,\lambda^*}-\|\hat{\beta}\|_{G, \lambda^*}\le \left(\sum_{j=1}^{s}\lambda_j^2\right)^{\frac{1}{2}}\|u\|_2-\sum_{j=s+1}^m\lambda_i\|U_{j^*}\|_2 .
\end{equation}
\end{lemma}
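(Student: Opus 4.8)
The plan is to treat both inequalities with a single template, since the second is the group-level analogue of the first, obtained by replacing absolute values of coordinates with the $\ell_2$-norms of groups. I would prove the first in detail and then indicate the substitutions for the second.

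For the first inequality, write $k = ss_0$ and let $T = \{i : \beta_i \ne 0\}$, so $|T| \le k$. Because $\beta$ vanishes off $T$, its sorted norm uses only its top $k$ entries, giving $\|\beta\|_{\tilde{\lambda}^*} = \sum_{i=1}^k \tilde{\lambda}_i|\beta_T|_{(i)}$. The heart of the argument is a rearrangement (splitting) inequality: since $\|\cdot\|_{\tilde{\lambda}^*}$ pairs the largest weights with the largest coordinates (optimal pairing), any other assignment of the weights to coordinates can only decrease the sum. Assigning $\tilde{\lambda}_1,\dots,\tilde{\lambda}_k$ to the coordinates in $T$ and $\tilde{\lambda}_{k+1},\dots,\tilde{\lambda}_p$ to those in $T^c$ therefore yields
$$
\|\hat{\beta}\|_{\tilde{\lambda}^*} \ge \sum_{i=1}^k \tilde{\lambda}_i\,|(\hat{\beta})_T|_{(i)} + \sum_{i=1}^{p-k}\tilde{\lambda}_{k+i}\,|u_{T^c}|_{(i)},
$$
where I used that $(\hat{\beta})_{T^c} = u_{T^c}$ on the complement.

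Next I would handle the two terms separately. The restricted functional $v \mapsto \sum_{i=1}^k \tilde{\lambda}_i|v|_{(i)}$ is itself a norm (the weights are non-increasing and nonnegative), so the reverse triangle inequality gives $\sum_{i=1}^k \tilde{\lambda}_i|(\hat{\beta})_T|_{(i)} \ge \|\beta\|_{\tilde{\lambda}^*} - \sum_{i=1}^k \tilde{\lambda}_i|u_T|_{(i)}$, and Cauchy--Schwarz bounds the subtracted term by $(\sum_{i=1}^k \tilde{\lambda}_i^2)^{1/2}\|u_T\|_2 \le (\sum_{i=1}^k\tilde{\lambda}_i^2)^{1/2}\|u\|_2$. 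For the tail term, a counting argument shows $|u_{T^c}|_{(j-k)} \ge u_{j^*}$ for every $j > k$: among the $j$ largest coordinates of $u$ at most $k$ can lie in $T$, so at least $j-k$ of them lie in $T^c$. Reindexing and using $\tilde{\lambda}_j \ge 0$ gives $\sum_{i=1}^{p-k}\tilde{\lambda}_{k+i}|u_{T^c}|_{(i)} \ge \sum_{j=k+1}^p \tilde{\lambda}_j u_{j^*}$. Combining the three estimates and rearranging yields the first claim. For the second inequality I would run the identical scheme on the vector of group norms $(\|u_{G_l}\|_2)_{l=1}^m$, whose sorted $\ell_1$ functional is exactly $\|\cdot\|_{G,\lambda^*}$ from \eqref{Slope2} and is again a norm; taking $T$ to be the at most $s$ active groups, the splitting inequality, the reverse triangle inequality and the counting argument $\|(u_{T^c})_{G_{(j-s)}}\|_2 \ge \|U_{j^*}\|_2$ go through verbatim with $s$ in place of $k$, while the Cauchy--Schwarz step uses $\sum_{\text{top } s \text{ groups}}\|u_{G_l}\|_2^2 \le \sum_{l=1}^m \|u_{G_l}\|_2^2 = \|u\|_2^2$.

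The main obstacle is establishing the splitting inequality cleanly — justifying that forcing the support coordinates onto the top weights and the complement onto the bottom weights lower-bounds the sorted norm — together with the accompanying counting inequality $|u_{T^c}|_{(j-k)} \ge u_{j^*}$ (and its group version). Both rest on the rearrangement principle that sorted weights paired with sorted magnitudes maximize the weighted sum; everything after these two facts is routine reverse-triangle and Cauchy--Schwarz bookkeeping.
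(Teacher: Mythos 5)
Your proof is correct. One thing to note up front: the paper never proves Lemma \ref{lemma:sglasso2} — it is stated as a ``basic inequality'' and is the double-sparse analogue of Lemmas A.1--A.2 in \cite{bellec2018slope} — so there is no in-paper argument to compare against; your write-up supplies the missing details, and it does so along the standard lines: a rearrangement-based splitting of the weights between the support $T$ and its complement, the reverse triangle inequality for the sorted weighted norm plus Cauchy--Schwarz on the support block, and the counting bound $|u_{T^c}|_{(j-k)}\ge u_{j^*}$ off the support. Two small points should be made explicit in a final version. First, to make the splitting a genuine bijection of weights to coordinates, enlarge $T$ so that $|T|=ss_0$ exactly (respectively, exactly $s$ groups in the second claim); this is harmless since $\beta$ still vanishes off the enlarged set, and both your Cauchy--Schwarz step and the counting argument remain valid. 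Second, the group case is not quite ``verbatim'': writing $b_l=\|\beta_{G_l}\|_2$, $\hat b_l=\|\hat\beta_{G_l}\|_2$, $w_l=\|u_{G_l}\|_2$, the vector $\hat b_T$ is \emph{not} equal to $b_T+w_T$, so the reverse triangle inequality for the sorted functional $J(v)=\sum_{j\le s}\lambda_j v_{(j)}$ cannot be invoked directly; instead one uses the per-group triangle inequality $b_l\le \hat b_l+w_l$ together with the monotonicity of $J$ on nonnegative vectors to get $J(b_T)\le J(\hat b_T)+J(w_T)$, which yields the needed bound $J(\hat b_T)\ge J(b_T)-J(w_T)$. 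With that observation inserted, your argument is complete. (Incidentally, the $\lambda_i$ appearing in the last sum of the paper's second display is a typo for $\lambda_j$, which is what your proof correctly produces.)
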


\begin{lemma}\label{lemma:sglasso}
Let $h(\beta): \mathbb{R}^p \to \mathbb{R}$ be a convex function and $\hat{\beta}$ be the solution to the convex optimization problem:
	\begin{equation}\label{sglasso}
		\arg\min_{\beta\in \mathbb{R}^{p}}\left\|y - X\beta\right\|_{n}^2 + h(\beta).
	\end{equation}
Therefore, estimator $\hat{\beta}$ satisfies
	\begin{equation}\label{convex}
		\|X(\hat\beta-\beta^*)\|_n^2 \le \frac{1}{n}\xi^TX(\hat\beta-\beta^*)+ \frac{1}{2}\left(h(\beta^*)-h(\hat{\beta})\right).
	\end{equation}
\end{lemma}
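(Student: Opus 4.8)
The plan is to exploit the convexity of the objective more carefully than the naive value comparison $F(\hat\beta)\le F(\beta^*)$ would allow, since that crude bound only produces the inequality with the worse coefficients $\tfrac{2}{n}\xi^TXu$ and $h(\beta^*)-h(\hat\beta)$, losing the crucial factor $\tfrac12$. Write $F(\beta)=\|y-X\beta\|_n^2+h(\beta)$ and set $u=\hat\beta-\beta^*$. Because $y=X\beta^*+\xi$, I have the clean substitution $y-X\hat\beta=\xi-Xu$, which I will use repeatedly to reduce every quadratic expansion to the three quantities $\|\xi\|_n^2$, $\tfrac1n\xi^TXu$, and $\|Xu\|_n^2$ (recall $\|Xu\|_n^2=\tfrac1n\|Xu\|_2^2$).

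My main route is a convex interpolation argument. For $t\in(0,1]$ introduce the interpolant $\beta_t=(1-t)\hat\beta+t\beta^*$. Global optimality of $\hat\beta$ gives $F(\hat\beta)\le F(\beta_t)$, while convexity of $h$ gives $h(\beta_t)\le(1-t)h(\hat\beta)+t\,h(\beta^*)$. Since the loss is quadratic it expands exactly: from $y-X\beta_t=\xi-(1-t)Xu$ one gets $\|y-X\beta_t\|_n^2=\|\xi\|_n^2-2(1-t)\tfrac1n\xi^TXu+(1-t)^2\|Xu\|_n^2$. Substituting both into $F(\hat\beta)\le F(\beta_t)$, cancelling $\|\xi\|_n^2$, collecting the coefficients of $\tfrac1n\xi^TXu$ (namely $-2t$), of $\|Xu\|_n^2$ (namely $t(2-t)$), and of the penalty values, and then dividing through by $t>0$, I arrive at $(2-t)\|Xu\|_n^2+h(\hat\beta)\le 2\tfrac1n\xi^TXu+h(\beta^*)$. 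Letting $t\to 0^+$ and dividing by $2$ yields exactly $\|Xu\|_n^2\le\tfrac1n\xi^TXu+\tfrac12\big(h(\beta^*)-h(\hat\beta)\big)$, which is \eqref{convex}.

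An equivalent shortcut uses first-order optimality: since $F$ is convex and finite on $\mathbb{R}^p$, there is a subgradient $g\in\partial h(\hat\beta)$ with $g=\tfrac2n X^T(y-X\hat\beta)$. The subgradient inequality $h(\beta^*)-h(\hat\beta)\ge\langle g,\beta^*-\hat\beta\rangle$, together with $(y-X\hat\beta)^TXu=(\xi-Xu)^TXu=\xi^TXu-\|Xu\|_2^2$, rearranges to the same conclusion. I would present the interpolation version as the primary proof because it uses nothing about $h$ beyond convexity and sidesteps any subgradient machinery, so it covers all the mixed penalties of interest uniformly.

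The only genuinely delicate point is securing the sharp factor $\tfrac12$ (equivalently the coefficient $\tfrac1n$ rather than $\tfrac2n$ on the noise term); this is precisely what the interpolate-and-take-limit step (or the subgradient inequality) buys over the trivial comparison, and it is what makes the basic inequality tight enough to be combined with Theorem \ref{gauss3} in the subsequent oracle analysis. Everything else is routine bookkeeping on the quadratic expansion.
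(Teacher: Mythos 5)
Your proposal is correct. The paper itself never writes out a proof of this lemma --- it is the standard ``basic inequality'' for convex-penalized least squares, taken essentially verbatim from Lemma A.2 of \cite{bellec2018slope}, and the proof there is exactly your subgradient shortcut: first-order optimality gives $\tfrac{2}{n}X^T(y-X\hat\beta)\in\partial h(\hat\beta)$, and the subgradient inequality at $\beta^*$ rearranges to \eqref{convex}. Your primary interpolation argument (comparing $F(\hat\beta)\le F(\beta_t)$ along $\beta_t=(1-t)\hat\beta+t\beta^*$, dividing by $t$, and letting $t\to 0^+$) is an equivalent and correct derivation of the same variational inequality that avoids subdifferential calculus entirely; your bookkeeping of the coefficients $-2t$, $t(2-t)$, and $t\bigl(h(\hat\beta)-h(\beta^*)\bigr)$ checks out, and you correctly identify that this is what secures the factor $\tfrac12$ lost by the naive comparison $F(\hat\beta)\le F(\beta^*)$. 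In effect your write-up supplies a self-contained proof that the paper delegates to the literature.
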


Next, we provide an important ingredient for analyzing the optimality of sparse group Lasso, which we called strong sparse group restricted eigenvalue (SSGRE) condition.
\begin{condition}
[Strong Sparse Group Restricted Eigenvalue Condition]\label{ssgre}
	Given design matrix $X \in \mathbb{R}^{n\times p}$, if $X$ satisfies
	$$
	\theta(s,s_0,c_0) \triangleq \min_{\delta \in \mathcal{C}_{SSGRE}(s,s_0,c_0)\setminus\{0\}}\frac{\|X\delta\|_n}{\|\delta\|_2}>0,
	$$
	where $\mathcal{C}_{SSGRE}(s,s_0,c_0)\triangleq \left\{\delta\in \mathbb{R}^p:\|\delta\|_1+\sqrt{s_0}\|\delta\|_{1,2}\le(2+c_0)\sqrt{ss_0}\|\delta\|_2\right\}$ for some positive constant $c_0$,
	we say $X$ satisfies SSGRE with parameter $\theta(s,s_0,c_0)$.
\end{condition}
Given $\gamma \in (0, 1)$, define 
$$
\delta(\lambda)\triangleq \exp\left(-\left(\frac{\gamma\lambda\sqrt{n}}{(4+\sqrt{2})\sigma}\right)^2\right) \quad \text{so that}\quad \lambda = \frac{(4+\sqrt{2})\sigma}{\gamma}\sqrt{\frac{\log(1/\delta(\lambda))}{n}}.
$$
We set tuning parameter $\lambda^{\#}$ as 
$$
\lambda^{\#} =\frac{(4+\sqrt{2})\sigma} {\gamma\sqrt{n}}\sqrt{\log\frac{2ed}{s_0}+ \frac{2}{s_0}\log \frac{4em}{s}}.
$$
As before discussed, the penalty term of sparse group Lasso is defined as
$$
h(\beta) = 2\lambda^{\#}\left\{\|\beta\|_1+ \sqrt{s_0}\|\beta\|_{1,2}\right\}.
$$

\begin{theorem}\label{sglassomain}
	Assume that $\beta^*$ is $(s, s_0)$-sparse and the conditions in Theorem \ref{gauss3} hold. Assume that condition \ref{ssgre} holds with parameter $\theta(s,s_0,\frac{2(1+\gamma)}{1-\gamma})$.
	The solution of sparse group Lasso $\hat\beta$ satisfies
	\begin{equation*}
		\|\hat\beta-\beta^*\|_2 \le (1+\gamma)\sqrt{ss_0}\lambda^{\#}\max\left\{\frac{2}{\theta(s,s_0,\frac{2(1+\gamma)}{1-\gamma})^2},\frac{1}{2}\left(\frac{\log(1/\delta_0)}{\log(1/\delta(\lambda^{\#}))ss_0}\right)\right\}.
	\end{equation*}
\end{theorem}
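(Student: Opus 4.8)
The plan is to run the standard ``basic inequality $+$ noise concentration $+$ restricted eigenvalue'' scheme, but with the bookkeeping dictated by the three sorting rules. Write $u=\hat\beta-\beta^*$. Applying Lemma~\ref{lemma:sglasso} to $h(\beta)=2\lambda^{\#}(\|\beta\|_1+\sqrt{s_0}\|\beta\|_{1,2})$ gives
\[
\|Xu\|_n^2\le \tfrac{1}{n}\xi^TXu+\tfrac{1}{2}\big(h(\beta^*)-h(\hat\beta)\big),
\]
while on the event of Theorem~\ref{gauss3} the stochastic term obeys $\frac{1}{n}\xi^TXu\le(4+\sqrt{2})\max\big(N(u),\,\|Xu\|_n\sigma\sqrt{\log(1/\delta_0)/n}\big)$. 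The overall strategy is: (i) combine the penalty gap with the weighted envelope $N(u)$ to force $u$ into the cone $\mathcal{C}_{SSGRE}$; (ii) invoke Condition~\ref{ssgre} to replace $\|Xu\|_n$ by $\theta\|u\|_2$; and (iii) solve the resulting scalar inequality, with the two terms of the $\max$ producing the two branches.

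The heart of the argument is a tail cancellation. First I would bound the penalty gap: since $\beta^*$ is $(s,s_0)$-sparse, with element support $S$ ($|S|\le ss_0$) and group support $T$ ($|T|\le s$), the triangle inequality (equivalently Lemma~\ref{lemma:sglasso2}) yields $\tfrac{1}{2}\big(h(\beta^*)-h(\hat\beta)\big)\le \lambda^{\#}\big(\|u_S\|_1+\sqrt{s_0}\|u_T\|_{1,2}\big)-\lambda^{\#}\big(\|u_{S^c}\|_1+\sqrt{s_0}\|u_{T^c}\|_{1,2}\big)$. Next I would split $N(u)$ along all three sortings into a head (the top $s$ groups and top $ss_0$ coordinates) and a tail. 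The decisive point is that the weights are non-increasing with $\lambda_j,\tilde\lambda_i\le\lambda_s$ on every tail index, whereas $\lambda^{\#}=\frac{4+\sqrt{2}}{\gamma\sqrt{n}}\lambda_s$; hence, using $\sum_{j>s}\|U_{j^*}\|_2\le\|u_{T^c}\|_{1,2}$ and $\sum_{i>ss_0}u_{i^*}\le\|u_{S^c}\|_1$, the tail of $(4+\sqrt{2})N(u)$ is at most $\gamma\lambda^{\#}\big(\|u_{S^c}\|_1+\sqrt{s_0}\|u_{T^c}\|_{1,2}\big)$. Because $\gamma<1$ this is strictly dominated by the negative off-support penalty, so those terms cancel and only the head survives.

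With the tail gone, Cauchy--Schwarz on the head (using $\sum_{i\le ss_0}\tilde\lambda_i^2=s_0\sum_{j\le s}\lambda_j^2$, which is of order $ss_0\lambda_s^2$, together with $\|u_S\|_1+\sqrt{s_0}\|u_T\|_{1,2}\le 2\sqrt{ss_0}\|u\|_2$) and dropping $\|Xu\|_n^2\ge 0$ leave $(1-\gamma)\big(\|u_{S^c}\|_1+\sqrt{s_0}\|u_{T^c}\|_{1,2}\big)\le 2(1+\gamma)\sqrt{ss_0}\|u\|_2$, which rearranges exactly to the membership $u\in\mathcal{C}_{SSGRE}\big(s,s_0,\tfrac{2(1+\gamma)}{1-\gamma}\big)$. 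Condition~\ref{ssgre} then gives $\|Xu\|_n\ge\theta\|u\|_2$ with $\theta=\theta\big(s,s_0,\tfrac{2(1+\gamma)}{1-\gamma}\big)$. In the regime where $N(u)$ attains the maximum, the surviving head estimate reads $\|Xu\|_n^2\le 2(1+\gamma)\sqrt{ss_0}\lambda^{\#}\|u\|_2$; substituting $\|Xu\|_n^2\ge\theta^2\|u\|_2^2$ and dividing by $\|u\|_2$ gives the first branch $\|u\|_2\le 2(1+\gamma)\sqrt{ss_0}\lambda^{\#}/\theta^2$. In the complementary regime, where $\|Xu\|_n\sigma\sqrt{\log(1/\delta_0)/n}$ attains the maximum, I would instead self-bound the penalty by the envelope: since the head weights satisfy $\lambda_j,\tilde\lambda_i\ge\lambda_s$ on the support, one has $\tfrac{1}{2}\big(h(\beta^*)-h(\hat\beta)\big)\le\frac{4+\sqrt{2}}{\gamma}N(u)\le\frac{4+\sqrt{2}}{\gamma}\|Xu\|_n\sigma\sqrt{\log(1/\delta_0)/n}$, so the basic inequality becomes linear in $\|Xu\|_n$; solving it and translating through the definition $\lambda^{\#}=\frac{(4+\sqrt{2})\sigma}{\gamma}\sqrt{\log(1/\delta(\lambda^{\#}))/n}$ yields the second branch of the stated maximum. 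Taking the larger of the two finishes the proof.

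The hard part will be the tail cancellation itself: one must carry the element sorting, the within-group sorting, and the group sorting simultaneously and verify, index by index, that the envelope weights align with the single tuning level $\lambda^{\#}\propto\lambda_s$, so that the $\gamma$-contraction of the tail is exact and the off-support penalty absorbs it. Collapsing the head constants to the clean cone parameter $\tfrac{2(1+\gamma)}{1-\gamma}$ — controlling $\sum_{j\le s}\lambda_j^2$ against $s\lambda_s^2$ up to the universal factor coming from $\sum_{j\le s}\log\frac{4em}{j}\asymp s\log\frac{4em}{s}$ — is the most delicate accounting, and it is also what keeps the two regimes a clean eigenvalue-free maximum rather than an additive combination.
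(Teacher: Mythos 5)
The core of your proposal---the basic inequality from Lemma \ref{lemma:sglasso}, the noise bound from Theorem \ref{gauss3}, the head/tail split of $N(u)$ with the tail absorbed into the off-support penalty at contraction rate $\gamma$, the resulting membership $u\in\mathcal{C}_{SSGRE}\bigl(s,s_0,\tfrac{2(1+\gamma)}{1-\gamma}\bigr)$, and Condition \ref{ssgre} to produce the branch $2(1+\gamma)\sqrt{ss_0}\lambda^{\#}/\theta^2$---is correct and is essentially the paper's proof. Using the support-based penalty bound instead of the sorted bound of Lemma \ref{lemma:sglasso2} is an immaterial variation, and your cone parameter in fact matches the theorem statement (the paper's own proof writes $\tfrac{4\gamma}{1-\gamma}$ at that step).

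The gap is in your complementary regime, and it is genuine. There you obtain $\|Xu\|_n^2\le(4+\sqrt{2})(1+1/\gamma)\|Xu\|_n\,\sigma\sqrt{\log(1/\delta_0)/n}$ and solve for $\|Xu\|_n$; but this bounds only the prediction error, while the theorem asserts a bound on $\|u\|_2$. The missing conversion needs an inequality of the form $\|u\|_2\le\frac{1}{2}\bigl(\frac{\log(1/\delta_0)}{\log(1/\delta(\lambda^{\#}))ss_0}\bigr)^{1/2}\|Xu\|_n$, and your regime hypothesis $\|Xu\|_n\sigma\sqrt{\log(1/\delta_0)/n}\ge N(u)$ cannot supply it: $N(u)$ does not dominate any fixed multiple of $\sqrt{ss_0}\,\lambda_s\|u\|_2/\sqrt{n}$, since for $u$ supported on a single coordinate $\sqrt{n}\,N(u)=(\sqrt{s_0}+1)\lambda_1\|u\|_2$, which is far smaller than $2\sqrt{ss_0}\lambda_s\|u\|_2$ when $s$ is large. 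Nor can you fall back on the RE condition there, because cone membership was obtained by dropping $\|Xu\|_n^2\ge 0$ against a bound valid only in your first regime; in the second regime the noise term is itself linear in $\|Xu\|_n$, so only a ``cone up to additive slack'' statement follows. The paper avoids this by splitting on $G(u)$ versus the upper envelope $F(u)=\gamma\lambda^{\#}\bigl\{2\sqrt{ss_0}\|u\|_2+\mathrm{tail}\bigr\}$ of $(4+\sqrt{2})N(u)$, rather than on $N(u)$ itself: since $F(u)\ge 2\gamma\lambda^{\#}\sqrt{ss_0}\|u\|_2$, the hypothesis $G(u)\ge F(u)$ immediately yields the needed $\|u\|_2\le\frac{1}{2}\bigl(\frac{\log(1/\delta_0)}{\log(1/\delta(\lambda^{\#}))ss_0}\bigr)^{1/2}\|Xu\|_n$, which combined with your linear-in-$\|Xu\|_n$ inequality gives the second branch, while the case $F(u)>G(u)$ is exactly your first-regime argument. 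Your split leaves uncovered precisely the set where $F(u)>G(u)\ge(4+\sqrt{2})N(u)$; redefining the two cases by $F$ versus $G$ is the one substantive repair needed.
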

Theorem \ref{sglassomain} establishes the estimation upper bounds for sparse group Lasso based on result \eqref{nu} in Theorem \ref{gauss3}.
\begin{remark}
	In Theorem \ref{sglassomain}, we can set
	$$
	\delta_0 = \exp\left\{-C_1\sigma^2\left(ss_0\log\frac{2ed}{s_0}+2s\log\frac{4em}{s}\right)\right\}.
	$$
	Then, $\left(\frac{\log(1/\delta_0)}{\log(1/\delta(\lambda^{\#}))ss_0}\right)$ is of the same order as a constant. 
	Therefore, given a constant $C_1$, there must exist a sufficiently large constant $C_2$ such that we can obtain the sparse group Lasso estimator $\hat{\beta}$ that satisfies, with probability not less than $1-\frac{\delta_0}{2}$,	
	$$
	\|\hat{\beta}-\beta^*\|_2 \le C_2\sigma\sqrt{ss_0\log\frac{2ed}{s_0}+ 2s\log \frac{4em}{s}}.
	$$

\end{remark}

\section{Main results for sparse group Slope}\label{sec:slope}
In this section, we provide the algorithmic construction and corresponding estimation properties of the sparse group Slope. 

Combined with the decreasing sequence $\{\lambda_j\}$ and equations \eqref{Slope1}, \eqref{Slope2}, we define
\begin{equation*}
\|\beta\|_* = \|\beta\|_{\tilde{\lambda}^*}+\sqrt{s_0}\|\beta\|_{G, \lambda}.
\end{equation*}
Similar to the condition proposed in \cite{bellec2018slope}, we propose a Weighted Sparse Group Restricted Eigenvalue Condition (WSGRE):
\begin{condition}[Weighted Sparse Group Restricted Eigenvalue Condition]\label{wsgre}
	Given non-increasing weight sequences $\{\lambda_j\}_{j=1}^m\ \mbox{and}\ \{\tilde{\lambda}_i\}_{i=1}^p$, convex cone $\mathcal{C}_{WSGRE}(s,s_0,c_0)$ is defined as 
	\begin{equation*}
	\mathcal{C}_{WSGRE}(s,s_0,c_0)\triangleq \left\{\delta\in \mathbb{R}^p:\|\delta\|_*\le(2+c_0)\sqrt{\sum_{i=1}^{ss_0}\tilde{\lambda}_i^{2}}\|\delta\|_2,c_0 > 0\right\}.
	\end{equation*}
	Given design matrix $X\in \mathbb{R}^{n\times p}$, if $X$ satisfies
\begin{equation}
\theta(s,s_0,c_0) \triangleq \min_{\delta \in \mathcal{C}_{WSGRE}(s,s_0,c_0)\setminus{0}}\frac{\|X\delta\|_n}{\|\delta\|_2}>0,
\end{equation}
we say that $X$ satisfies WSGRE condition with parameter $\theta(s,s_0,c_0)$.
\end{condition}
Based on the relationship between $\{\lambda_j\}$ and $\{\tilde{\lambda}_i\}$, we have $\tilde{\lambda}_{ss_0} = \lambda_s$ and $\sum_{i=1}^{ss_0}\tilde{\lambda}_i = s_0\sum_{j=1}^{s}\lambda_j$. 
Assume that $\delta \in \mathcal{C}_{SSGRE}(s,s_0,c_0)$, i.e., $\|\delta\|_1+\sqrt{s_0}\|\delta\|_{1,2} \le (2+c_0)\sqrt{ss_0}\|\delta\|_2$.
Observe that
\begin{align*}
	\sqrt{s_0}\sum_{j=s+1}^m\lambda_j\|\delta_{G_{j^*}}\|_2+\sum_{i=ss_0+1}^p\tilde{\lambda}_i\delta_{i^*}&\le \lambda_s\left\{\sum_{j=s+1}^m\sqrt{s_0}\|\delta_{G_{j^*}}\|_2+\sum_{i=ss_0+1}^p\delta_{i^*}\right\}\\
	&\le \lambda_s\left\{\|\delta\|_1+\sqrt{s_0}\|\delta\|_{1,2}\right\}\\
	&\le (2+c_0)\sqrt{ss_0}\lambda_{s}\|\delta\|_2.
\end{align*}
According to the Cauchy-Schwarz inequality, we have
\begin{align*}
	\|\delta\|_{*} \le& \sqrt{s_0}\sum_{j=1}^s\lambda_j\|\delta_{G_{j^*}}\|_2+\sum_{i=1}^ss_0\tilde{\lambda}_i\delta_{i^*}+\\
	&\sqrt{s_0}\sum_{j=s+1}^m\lambda_j\|\delta_{G_{j^*}}\|_2+\sum_{i=ss_0+1}^p\tilde{\lambda}_i\delta_{i^*}\\
	\le& 2\sqrt{\sum_{i=1}^{ss_0}\tilde\lambda_i^2}\|\delta\|_2 + (2+c_0)\sqrt{ss_0}\lambda_{s}\|\delta\|_2\\
	\le& (4+c_0)\sqrt{ss_0}\lambda_{s}\|\delta\|_2.
\end{align*}
Above discussion demonstrates that for any $\delta \in \mathcal{C}_{SSGRE}(s,s_0,c_0)$, it satifies that $\delta \in \mathcal{C}_{WSGRE}(s,s_0,2+c_0)$. 
In other words, the WSGRE condition is more stringent than the SSGRE condition. 
Consider the penalty of sparse group Slope as 
\begin{equation*}
	h(\beta)= \frac{2(4+\sqrt{2})}{\sqrt{n}\gamma}\|\beta\|_*,
	\end{equation*}
where $\gamma$ is some constant belonging to $(0, 1)$.
We establish the estimation upper bound for sparse group Slope in the following theorem.
\begin{theorem}\label{sgSlopemain}
	Assume that $\beta^*$ is $(s, s_0)$-sparse and the conditions in Theorem \ref{gauss3} hold. Assume that condition \ref{wsgre} with parameter $\theta(s,s_0,\frac{2(1+\gamma)}{1-\gamma})$.
	The solution of sparse group Slope $\hat\beta$ satisfies
\begin{equation}
	\|\hat\beta-\beta^*\|_2 \le (1+\gamma)\sqrt{ss_0}\lambda^{\#}\max\left\{\frac{2}{\theta(s,s_0,\frac{2(1+\gamma)}{1-\gamma})^2},\frac{1}{2}\left(\frac{\log(1/\delta_0)}{\log(1/\delta(\lambda^{\#}))ss_0}\right)\right\}.
\end{equation}
where $\lambda^{\#}$ and $\delta(\lambda)$ are defined in the same manner as in Theorem \ref{sglassomain}.
\end{theorem}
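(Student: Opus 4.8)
The plan is to follow the same route as the proof of Theorem~\ref{sglassomain} for sparse group Lasso, with the weighted sorted norm $\|\cdot\|_*$ and the cone $\mathcal{C}_{WSGRE}$ replacing $\|\cdot\|_1+\sqrt{s_0}\|\cdot\|_{1,2}$ and $\mathcal{C}_{SSGRE}$. Set $u=\hat\beta-\beta^*$. The starting observation is that the envelope function of \eqref{Nu} is exactly $N(u)=\frac{1}{\sqrt n}\|u\|_*$, so the sparse group Slope penalty is $h(\beta)=\frac{2(4+\sqrt2)}{\gamma}N(\beta)$ and the concentration bound \eqref{nu} of Theorem~\ref{gauss3} applies directly to it. Since condition~\ref{sgnorm} holds, Theorem~\ref{gauss2} yields $\mathbf{P}(\Omega_2)\ge\frac12$, so \eqref{nu} is in force with probability at least $1-\delta_0/2$.

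From the basic inequality \eqref{convex} of Lemma~\ref{lemma:sglasso} I would bound $\frac1n\xi^TXu$ by $(4+\sqrt2)\max\{N(u),\,\|Xu\|_n\sigma\sqrt{\log(1/\delta_0)/n}\}$. For the penalty gap I would apply Lemma~\ref{lemma:sglasso2} to each of the two components of $\|\cdot\|_*$ and add the estimates; the group component carries a factor $\sqrt{s_0}$, and the identity $\sqrt{s_0}\big(\sum_{j=1}^{s}\lambda_j^2\big)^{1/2}=\big(\sum_{i=1}^{ss_0}\tilde\lambda_i^2\big)^{1/2}$, coming from $\tilde\lambda_i=\lambda_{\lceil i/s_0\rceil}$, makes the two head contributions merge into $2\big(\sum_{i=1}^{ss_0}\tilde\lambda_i^2\big)^{1/2}\|u\|_2$. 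This produces $\|\beta^*\|_*-\|\hat\beta\|_*\le 2\big(\sum_{i=1}^{ss_0}\tilde\lambda_i^2\big)^{1/2}\|u\|_2-T'$, where $T'\ge0$ collects the tail mass $\sum_{i>ss_0}\tilde\lambda_iu_{i^*}+\sqrt{s_0}\sum_{j>s}\lambda_j\|U_{j^*}\|_2$ of $\|u\|_*$.

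I would then split according to which term attains the maximum. When $N(u)$ dominates, substituting $N(u)=\frac{1}{\sqrt n}\|u\|_*$ and the penalty gap into \eqref{convex} leaves $T'$ with coefficient $\frac{4+\sqrt2}{\sqrt n}\big(1-\frac1\gamma\big)<0$, because $\gamma\in(0,1)$; dropping this nonpositive term and bounding the head by Cauchy--Schwarz simultaneously shows that $u\in\mathcal{C}_{WSGRE}\big(s,s_0,\tfrac{2(1+\gamma)}{1-\gamma}\big)$ and that $\|Xu\|_n^2\le\frac{2(4+\sqrt2)(1+\gamma)}{\gamma\sqrt n}\big(\sum_{i=1}^{ss_0}\tilde\lambda_i^2\big)^{1/2}\|u\|_2$. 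Condition~\ref{wsgre} then gives $\|Xu\|_n\ge\theta\|u\|_2$, and dividing yields the $2/\theta^2$ branch once $\frac{4+\sqrt2}{\gamma\sqrt n}\big(\sum_{i\le ss_0}\tilde\lambda_i^2\big)^{1/2}$ is rewritten through $\tilde\lambda_{ss_0}=\lambda_s$ and the definition of $\lambda^\#$. When instead the stochastic term dominates, the norm property of $\|\cdot\|_*$ gives $\|\beta^*\|_*-\|\hat\beta\|_*\le\|u\|_*=\sqrt n\,N(u)\le\|Xu\|_n\sigma\sqrt{\log(1/\delta_0)}$; substituting into \eqref{convex} cancels one power of $\|Xu\|_n$ and bounds $\|Xu\|_n\precsim\sigma\sqrt{\log(1/\delta_0)/n}$, and converting this into a bound on $\|u\|_2$ through the definition of $\mathcal{C}_{WSGRE}$ (according to whether or not $u$ lies in the cone) produces the $\theta$-free confidence branch. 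Taking the larger of the two estimates and simplifying the constants via $\delta(\lambda)$ gives the stated bound.

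The main obstacle is the middle step. Unlike the scalar Slope analysis of \cite{bellec2018slope}, the penalty here is the sum of an element-wise $\tilde\lambda$-weighted sorted norm and a group-wise $\lambda$-weighted sorted norm, so Lemma~\ref{lemma:sglasso2} must be invoked on both and the pieces recombined; the exact cone radius $\tfrac{2(1+\gamma)}{1-\gamma}$, and the collapse of $\big(\sum_{i\le ss_0}\tilde\lambda_i^2\big)^{1/2}$ to the $\sqrt{ss_0}\lambda^\#$ appearing in the statement, both rest on the matching identity $\sum_{i\le ss_0}\tilde\lambda_i^2=s_0\sum_{j\le s}\lambda_j^2$ recorded just before the theorem. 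Checking that the stochastic branch is genuinely free of $\theta$, so that the final bound is a clean maximum rather than a sum, is the other delicate point.
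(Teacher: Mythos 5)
Your proposal follows the same skeleton as the paper's proof and invokes the same ingredients: the identity $N(u)=\frac{1}{\sqrt n}\|u\|_*$, the basic inequality of Lemma \ref{lemma:sglasso}, Lemma \ref{lemma:sglasso2} applied separately to the two components of $\|\cdot\|_*$ with the merging identity $\sum_{i\le ss_0}\tilde\lambda_i^2=s_0\sum_{j\le s}\lambda_j^2$, the cone radius $\frac{2(1+\gamma)}{1-\gamma}$, and condition \ref{wsgre} in the branch where $N(u)$ dominates. In that branch your algebra reproduces the paper's Case 2 exactly, including $\|Xu\|_n^2\le\frac{2(4+\sqrt2)(1+\gamma)}{\gamma\sqrt n}\bigl(\sum_{i\le ss_0}\tilde\lambda_i^2\bigr)^{1/2}\|u\|_2$ and the conclusion $\|u\|_2\le 2(1+\gamma)\sqrt{ss_0}\lambda^{\#}/\theta^2$.

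Where you deviate is the case split, and this is precisely where your flagged ``delicate point'' lives. The paper does not split on $\max\bigl\{(4+\sqrt2)N(u),\,G(u)\bigr\}$ with $G(u)=(4+\sqrt 2)\|Xu\|_n\sigma\sqrt{\log(1/\delta_0)/n}$; it first inflates $(4+\sqrt2)N(u)\le H(u)\triangleq\gamma\{2\Lambda(s)\|u\|_2+K(u)\}$, where $\Lambda(s)\propto\bigl(\sum_{i\le ss_0}\tilde\lambda_i^2\bigr)^{1/2}$ is the head coefficient and $K(u)$ the tail mass, and splits on $H(u)\le G(u)$ versus $H(u)>G(u)$. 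Because $H$ explicitly contains $2\gamma\Lambda(s)\|u\|_2$, the hypothesis $H\le G$ immediately yields $\|u\|_2\le\frac12\bigl(\frac{\log(1/\delta_0)}{\log(1/\delta(\lambda^{\#}))ss_0}\bigr)^{1/2}\|Xu\|_n$, a bound on $\|u\|_2$ by $\|Xu\|_n$ that never mentions the cone or $\theta$, and the confidence branch closes in two lines. In your split, the hypothesis that the stochastic term dominates only controls $\|u\|_*$, not $\|u\|_2$, which forces your cone dichotomy. That dichotomy does work, but the in-cone sub-case is \emph{not} $\theta$-free: there condition \ref{wsgre} and your bound $\|Xu\|_n\le\frac{(1+\gamma)(4+\sqrt2)}{\gamma}\sigma\sqrt{\log(1/\delta_0)/n}$ give $\|u\|_2\le(1+\gamma)\sqrt{ss_0}\lambda^{\#}\,\frac{x^{1/2}}{\theta}$ with $x=\frac{\log(1/\delta_0)}{\log(1/\delta(\lambda^{\#}))ss_0}$, i.e.\ the geometric mean of the two stated branches; you then need the extra observation $\frac{x^{1/2}}{\theta}=\sqrt{\frac{2}{\theta^2}\cdot\frac{x}{2}}\le\max\bigl\{\frac{2}{\theta^2},\frac{x}{2}\bigr\}$ to land inside the claimed maximum (the out-of-cone sub-case gives $\|u\|_2\le\frac{1+\gamma}{2+c_0}\sqrt{ss_0}\lambda^{\#}x$, which is fine). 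With that one extra line your argument is complete; alternatively, adopting the paper's split on $H$ versus $G$ removes the sub-cases altogether.
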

The conclusion of Theorem \ref{sgSlopemain} is similar to the conclusion of Theorem \ref{sglassomain} for sparse group Lasso. However, the matrix condition in Theorem \ref{sgSlopemain} is stronger compared to the condition used in Theorem \ref{sglassomain}. The advantage of sparse group Slope over sparse group Lasso is akin to the advantage of ordinary Slope over Lasso, where it becomes adaptive to the unknown sparsity $s$.
The reason behind this "adaptivity to $s$" is that the RE condition has been slightly strengthened for sparse group Slope.
\section{Minimax lower bound}\label{sec:lower}
In this section, we provide the minimax lower bounds for $\ell_2$-parameter estimation on the parameter space $\Theta^{m,d}(s, s_0)$ and state that the upper bounds established in Theorem \ref{sglassomain} and \ref{sgSlopemain} match the lower bounds.
Previous works have extensively studied minimax rates for high-dimensional sparse linear regression. Several papers focus on the element-wise $s$-sparsity class, such as \cite{raskutti2011minimax,verzelen2012minimax, bellec2018slope}, while there have been efforts dedicated to group sparsity, as seen in \cite{tsy2011}.

Though a similar minimax lower bounds of double sparse regression has been provided in \cite{cai2019sparse,zhang2023minimax},
we derive the same lower bound solely by relying on condition \ref{sgnorm}, without any RE or RIP conditions.
We begin with the definition of the following parameter subspace.
Consider parameter space $\widetilde\Theta^{m,d}(s, s_0)$:
$$
\widetilde\Theta^{m,d}(s, s_0) = \{\theta \in \mathbb{R}^p :\|\beta\|_{0,2}\le s\ \text{and}\ \|\beta_{G_j}\|_0\le s_0,\forall j\in [m]\}.
$$ 
Different from $\Theta^{m,d}(s, s_0)$, $ \widetilde\Theta^{m,d}(s, s_0)$ requires each group to be constrained to an $\ell_0$-ball with radius $s_0$, and the group sparsity of $ \widetilde\Theta^{m,d}(s, s_0)$ reaches $s$.
It is obvious that $ \widetilde\Theta^{m,d}(s, s_0)  \subseteq\Theta^{m,d}(s, s_0)$.
Hence we have
\begin{equation}\label{lower_relation}
	\inf_{\hat\beta}\sup_{\beta^* \in \Theta^{m,d}(s, s_0)}\mathbf{E}\|\hat\beta-\beta^*\|^2 \geq \inf_{\hat\beta}\sup_{\beta^* \in \widetilde\Theta^{m,d}(s, s_0)}\mathbf{E}\|\hat\beta-\beta^*\|^2.
\end{equation}
In what follows, we consider the minimax lower bounds for $\Theta^{m,d}(s, s_0)$. 
We first provide the lower bounds for the packing number of $ \widetilde\Theta^{m,d}(s, s_0)$.
Let $M(\rho;\widetilde\Theta^{m,d}(s, s_0), \|\cdot\|_H)$ be the cardinality of $\rho$-packing set of parameter space $\widetilde\Theta^{m,d}(s, s_0)$ in the sense of Hamming metric $\|\cdot\|_H$.
\begin{lemma}[Lower bounds for the packing number \cite{li2022minimax}]\label{lem2}
	The cardinality of $ \frac{ss_0}{4}$-packing set of $\widetilde\Theta^{m,d}(s, s_0)$ is lower bounded as
	\begin{equation*}
		M(\frac{ss_0}{4};\widetilde\Theta^{m,d}(s, s_0), \|\cdot\|_H) \geq \exp\left\{\frac{1}{4}\left(s \log \frac{em}{s}+ss_0\log \frac{ed}{s_0}\right)\right\}.
	\end{equation*}
\end{lemma}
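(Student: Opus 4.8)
The plan is to exhibit an explicit packing set of the required cardinality built entirely from binary double-sparse patterns. Concretely, I restrict attention to vectors $\beta\in\{0,1\}^p$ having exactly $s$ active groups and exactly $s_0$ active coordinates inside each active group; every such $\beta$ lies in $\widetilde\Theta^{m,d}(s,s_0)$ (the amplitudes are irrelevant since the metric is Hamming), and for two such vectors $\|\beta-\beta'\|_H$ equals the size of the symmetric difference of their supports. Because the target exponent splits additively into a \emph{group-level} term $\tfrac14 s\log\frac{em}{s}$ and a \emph{within-group} term $\tfrac14 ss_0\log\frac{ed}{s_0}$, the natural construction is a product of two independent Gilbert--Varshamov (VG) packings: one over the choice of active groups and one over the within-group supports. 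Taking the product multiplies the cardinalities, hence adds the two entropies.

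In detail, let $\mathcal{A}\subseteq\{T\subseteq[m]:|T|=s\}$ be a VG packing of group-support sets with pairwise symmetric difference $|T\triangle T'|\ge s/4$, and let $\mathcal{W}$ be a VG packing of $s$-tuples $(W_1,\dots,W_s)$ of size-$s_0$ subsets of $[d]$, equipped with the block metric $\rho(W,W')=\sum_{i=1}^s|W_i\triangle W'_i|$ and pairwise distance $\rho\ge ss_0/4$. For $T\in\mathcal{A}$ listed in increasing order as $j_1<\dots<j_s$ and $W\in\mathcal{W}$, let $\beta_{T,W}$ be the binary vector whose support in group $j_i$ is $W_i$. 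I then take the family $\{\beta_{T,W}:T\in\mathcal{A},\,W\in\mathcal{W}\}$, which has cardinality $|\mathcal{A}|\cdot|\mathcal{W}|$.

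The separation is verified through the decomposition $\|\beta_{T,W}-\beta_{T',W'}\|_H = s_0\,|T\triangle T'| + \sum_{j\in T\cap T'}|S_j\triangle S'_j|$, where $S_j,S'_j$ denote the within-group supports. If $T\ne T'$, every group in $T\triangle T'$ contributes exactly $s_0$ differing coordinates, so the distance is at least $s_0|T\triangle T'|\ge ss_0/4$. If $T=T'$ but $W\ne W'$, the first term vanishes and the distance equals $\rho(W,W')\ge ss_0/4$. In either case the pair is $\tfrac{ss_0}{4}$-separated, so the family is a valid $\tfrac{ss_0}{4}$-packing.

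It remains to lower bound $|\mathcal{A}|$ and $|\mathcal{W}|$, and both follow from the volume/greedy form of the VG bound: a maximal packing with pairwise distance $\ge r$ is an $r$-covering, so its cardinality is at least the ambient count divided by the largest metric ball of radius $r$. For $\mathcal{A}$ one has $\binom{m}{s}$ sets against a ball of radius $s/4$ of size at most a polynomial factor times $\binom{s}{\lfloor s/8\rfloor}\binom{m-s}{\lfloor s/8\rfloor}$, giving $\log|\mathcal{A}|\ge \tfrac14 s\log\frac{em}{s}$ for $m/s$ large. For $\mathcal{W}$ the ambient count is $\binom{d}{s_0}^s$, and the ball of radius $ss_0/4$ is controlled by summing $\prod_{i}\binom{s_0}{b_i}\binom{d-s_0}{b_i}$ over $\sum_i b_i\le ss_0/8$, yielding $\log|\mathcal{W}|\ge \tfrac14 ss_0\log\frac{ed}{s_0}$. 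I expect this within-group estimate to be the main obstacle: unlike the group layer, the metric couples the $s$ blocks through the single budget constraint $\sum_i b_i\le ss_0/8$, so the ball is a constrained product and one must show it is exponentially smaller than $\binom{d}{s_0}^s$ uniformly over how the budget is split among blocks, which I would handle with a Vandermonde/Stirling bound on $\prod_i\binom{s_0}{b_i}\binom{d-s_0}{b_i}$ combined with a crude count of admissible $(b_i)$. Combining the two layers gives $\log M \ge \log|\mathcal{A}|+\log|\mathcal{W}|\ge \tfrac14\big(s\log\frac{em}{s}+ss_0\log\frac{ed}{s_0}\big)$, as claimed.
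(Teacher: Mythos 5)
The paper does not actually prove Lemma \ref{lem2} itself --- it is imported from \cite{li2022minimax}, whose proof uses \emph{multi-ary} Gilbert--Varshamov bounds --- but your blind attempt contains a genuine gap, and it sits precisely in your opening claim that ``the amplitudes are irrelevant since the metric is Hamming.'' The opposite is true: the Hamming distance counts coordinates where two vectors \emph{differ in value}, so value (or sign) patterns on a common support are a source of packing entropy, and it is exactly this entropy that produces the ``$e$'' inside the logarithms, i.e.\ the additive $\frac14(s+ss_0)$ part of the exponent. By restricting to $\{0,1\}$-valued vectors with exactly $s_0$ active coordinates per active group, your family can never exceed $\binom{m}{s}\binom{d}{s_0}^s$ elements. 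Take $s_0=d$ (the pure group-sparse case, which the lemma must cover since it feeds Theorem \ref{thm:lower}): then $\binom{d}{s_0}=1$, your $\mathcal{W}$ is a singleton, and the whole family has at most $\binom{m}{s}\le \exp\{s\log\frac{em}{s}\}$ elements, strictly below the claimed bound $\exp\{\frac14(s\log\frac{em}{s}+sd)\}$ as soon as $d>3\log\frac{em}{s}$. The failure is not confined to this extreme point: whenever $d-s_0<s_0/8$, any two admissible support tuples satisfy $\rho(W,W')\le 2s(d-s_0)<ss_0/4$, so again $|\mathcal{W}|=1$, while the within-group part of the target is still $\frac14 ss_0\log\frac{ed}{s_0}\ge \frac14 ss_0$, which the group layer alone cannot supply when $d\gg\log\frac{em}{s}$. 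The caveat you yourself flagged (``for $m/s$ large'') is the same phenomenon appearing, in milder form, at the group level.

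The repair is the ingredient used by \cite{li2022minimax} and visible in the paper's own Lemma \ref{lem3}: enlarge the alphabet. Working with vectors in $\{-1,0,1\}^p$ (or with multi-ary values), each active group carries $\asymp s_0$ nats of sign entropy \emph{on top of} the support entropy $\log\binom{d}{s_0}$, and a Gilbert--Varshamov argument on the combined support-and-sign space yields the full exponent $\frac14\bigl(s\log\frac{em}{s}+ss_0\log\frac{ed}{s_0}\bigr)$ uniformly over $1\le s\le m$ and $1\le s_0\le d$. Your two-layer product structure and the distance decomposition $\|\beta_{T,W}-\beta_{T',W'}\|_H=s_0|T\triangle T'|+\sum_{j\in T\cap T'}|S_j\triangle S_j'|$ are sound and worth keeping; what must be added is a third (sign) layer, without which the bound as stated --- with $e$ inside both logarithms --- is unprovable by support counting alone.
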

\cite{li2022minimax} utilized the structures of double sparsity and combined multi-ary Gilbert-Varshamov bounds \cite{gilbert1952comparison} to construct the packing set of $\widetilde\Theta^{m,d}(s, s_0)$ in a more concise manner.

The way to prove minimax lower bound for double sparse structure in \cite{li2022minimax,cai2019sparse} needs the eigenvalue condition for design matrix. 
To prove the same lower bound without this assumption, we construct a signed version of packing set by the following lemma on the basis of Lemma \ref{lem2} and sparse group normalization. 
Denote
\begin{equation}\label{sgnorm2}
	\sup_{j\in [m]}\sup_{|S|=s_0,S\subseteq G_j}\|X_{S}\|_{op} = \sqrt{n} \vartheta_{\max}.
\end{equation}

\begin{lemma}\label{lem3}
	Assume that the design matrix $X\in \mathbb{R}^{n\times p}$  satisfies \eqref{sgnorm2}. Then, there exists a subset $\mathcal{M} \subseteq \widetilde\Theta^{m,d}(s, s_0)\cap \{-1,0,1\}^{p}$ satisfying the following properties:
	\begin{itemize}
		\item [(i)] $\|\beta\|_{0,2}= s\ \text{and}\ \|\beta_{G_j}\|_0= s_0,\forall j\in [m]$.
		\item [(ii)]$\|X\beta\|_n^2 \le \vartheta_{\max}^2 ss_0, \forall \beta \in \mathcal{M}$.
		\item [(iii)]$\|\beta_i,\beta_j\|_H \ge \frac{ss_0}{4}, \forall i\ne j\ \text{and}\ \beta_i,\beta_j \in \mathcal{M}$.
	\end{itemize}
\end{lemma}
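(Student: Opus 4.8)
The plan is to take the unsigned packing set supplied by Lemma \ref{lem2} and upgrade it to a \emph{signed} packing set, using a first-moment (averaging-over-signs) argument so that the sparse group normalization \eqref{sgnorm2} is the only property of $X$ that is ever invoked. The point is that the supports and their mutual Hamming separation are handled entirely by Lemma \ref{lem2}; the signs are chosen afterwards solely to control $\|X\beta\|_n^2$.

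First I would fix a family $W\subseteq\{0,1\}^p$ of support indicators realizing Lemma \ref{lem2}, which (by the Gilbert--Varshamov construction of \cite{li2022minimax}) can be taken to consist of patterns with exactly $s$ active groups and exactly $s_0$ nonzeros in each active group, with pairwise Hamming distance at least $\tfrac{ss_0}{4}$. For $\omega\in W$ write $T_\omega$ for its support and $J_\omega$ for its active groups, so $T_\omega=\bigcup_{j\in J_\omega}S_j$ with $S_j\subseteq G_j$, $|S_j|=s_0$. To each $\omega$ I attach independent Rademacher signs $\epsilon$ on $T_\omega$ and set $\beta^\omega=\epsilon\odot\mathbf{1}_{T_\omega}\in\{-1,0,1\}^p$. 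Because the signs are centered and independent, all off-diagonal terms of $\beta^{\omega\top}X^\top X\beta^\omega$ vanish in expectation, giving
\[
\mathbf{E}_\epsilon\|X\beta^\omega\|_n^2=\frac{1}{n}\sum_{a\in T_\omega}\|X_a\|_2^2=\frac{1}{n}\sum_{j\in J_\omega}\|X_{S_j}\|_F^2.
\]
Since each $X_{S_j}\in\mathbb{R}^{n\times s_0}$ has rank at most $s_0$, I bound $\|X_{S_j}\|_F^2\le s_0\|X_{S_j}\|_{op}^2\le s_0\,n\vartheta_{\max}^2$ using \eqref{sgnorm2}, and summing over the $s$ active groups yields $\mathbf{E}_\epsilon\|X\beta^\omega\|_n^2\le \vartheta_{\max}^2 ss_0$. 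As the minimum over sign patterns never exceeds the average, for every $\omega$ there is at least one $\epsilon^\omega$ with $\|X\beta^\omega\|_n^2\le\vartheta_{\max}^2 ss_0$; I fix one such choice per $\omega$ and let $\mathcal{M}$ be the resulting family.

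It then remains to verify the three claims. Property (i) and membership in $\widetilde\Theta^{m,d}(s,s_0)\cap\{-1,0,1\}^p$ hold by construction, and (ii) is exactly the selection above. For (iii), two distinct elements $\beta_i,\beta_j\in\mathcal{M}$ arise from distinct supports $T_i\ne T_j$ and must disagree at every coordinate of $T_i\triangle T_j$ (there one entry is $0$ and the other is $\pm1$); hence $\|\beta_i,\beta_j\|_H\ge|T_i\triangle T_j|$, which is the Hamming distance of the corresponding indicators in $W$ and is therefore at least $\tfrac{ss_0}{4}$.

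The only delicate step, and the sole place the matrix enters, is the bound on $\|X\beta^\omega\|_n^2$: for a \emph{fixed} sign pattern the cross-group inner products $\langle X_{S_j}\beta_{S_j},X_{S_k}\beta_{S_k}\rangle$ could be arbitrarily large, because \eqref{sgnorm2} controls only the per-group operator norms and says nothing about inter-group correlations. The averaging over signs is precisely what annihilates these uncontrolled cross terms and lets the diagonal (trace) estimate carry the argument; this is the ``signed version'' alluded to before the statement, and it is the reason a first-moment existence argument suffices here without invoking any RE- or RIP-type condition.
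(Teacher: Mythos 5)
Your proposal is correct, and it reaches the lemma by a genuinely different mechanism than the paper. The paper also starts from the unsigned packing set of Lemma \ref{lem2} and also notes that attaching signs cannot decrease Hamming distances, but its sign selection is a deterministic induction on the number of active groups: at each step a new group is added with a \emph{single} sign shared by all its entries, and the parallelogram identity $\|X\theta_1\|_n^2+\|X\theta_2\|_n^2 = 2\|Xa\|_n^2+2\|Xb\|_n^2$ (with $a$ the old part and $b$ the new group) shows one of the two sign choices keeps the running bound; the within-group cross terms are never averaged away but are absorbed by the operator-norm bound $\|Xb\|_n^2\le \vartheta_{\max}^2\|b\|_2^2$ from \eqref{sgnorm2}. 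Your one-shot first-moment argument over i.i.d.\ per-coordinate Rademacher signs replaces this induction entirely: \emph{all} cross terms, within and across groups, vanish in expectation, and only the diagonal $\frac{1}{n}\sum_{a\in T_\omega}\|X_a\|_2^2$ survives. This buys two things: the proof is shorter (no induction, no bookkeeping of the $\tfrac{s-1}{s}$ normalization), and it is slightly more economical in what it demands of $X$ — since the diagonal equals $\sum_{j}\|X_{S_j}\|_F^2/n = \sum_{a}\|X_a\|_2^2/n$, column normalization alone would suffice, whereas the paper's per-group constant signs genuinely require the $s_0$-subset operator-norm control of \eqref{sgnorm2}. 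What the paper's route buys in exchange is a constructive two-point selection producing sign patterns that are constant within each group, and a purely deterministic argument with no probabilistic language. Both yield exactly the same conclusion (i)--(iii).
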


Combining Lemma \ref{lem3} , we establish the minimax lower bound in the following theorem.

\begin{theorem}\label{thm:lower}
	Consider linear regression model $y = X\beta^* + \varepsilon$, where $\varepsilon \sim \mathcal{N}(0,\sigma^2\mathrm{I}_n)$. 
	Assume that $X$ satisfies \eqref{sgnorm2} with $\vartheta_{\max} < \infty$.
	Then, we have
	\begin{equation}\label{lower bound}
		\inf_{\hat\beta}\sup_{\beta^* \in \Theta^{m,d}(s, s_0)}\mathbf{E}\|\hat\beta-\beta^*\|^2\geq \frac{\sigma^2}{256n\vartheta_{\max}^2}\left( s \log \frac{em}{s}+ss_0\log \frac{ed}{s_0}\right).
	\end{equation}
\end{theorem}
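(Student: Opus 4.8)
The plan is to prove the lower bound by the standard reduction from estimation to testing via Fano's inequality, where essentially all of the structural work has already been carried out in Lemma \ref{lem3}. By the inclusion $\widetilde\Theta^{m,d}(s,s_0)\subseteq \Theta^{m,d}(s,s_0)$ and the resulting inequality \eqref{lower_relation}, it suffices to prove the stated bound with the supremum restricted to $\widetilde\Theta^{m,d}(s,s_0)$. I would take the signed packing set $\mathcal{M}=\{\beta_1,\dots,\beta_M\}\subseteq \widetilde\Theta^{m,d}(s,s_0)\cap\{-1,0,1\}^p$ furnished by Lemma \ref{lem3}, whose cardinality satisfies $\log M \ge \frac14\left(s\log\frac{em}{s}+ss_0\log\frac{ed}{s_0}\right)$ by Lemma \ref{lem2}, and rescale it to a family of hypotheses $\{\mu\beta_i\}_{i=1}^M$ indexed by a tuning radius $\mu>0$ to be fixed at the end. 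Since $\widetilde\Theta^{m,d}(s,s_0)$ is defined purely by sparsity constraints, every rescaled vector $\mu\beta_i$ again lies in $\widetilde\Theta^{m,d}(s,s_0)$.

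Next I would assemble the three ingredients that Fano's method requires. For the separation, property (iii) of Lemma \ref{lem3} together with the observation that distinct entries of $\{-1,0,1\}$-valued vectors differ by at least one in absolute value gives $\|\mu\beta_i-\mu\beta_j\|_2^2 \ge \mu^2\|\beta_i-\beta_j\|_H \ge \mu^2 ss_0/4$ for all $i\ne j$, so the hypotheses are $2\delta$-separated in $\ell_2$ with $\delta = \mu\sqrt{ss_0}/4$. For the information content, the Gaussian model gives the closed form $\mathrm{KL}(P_{\mu\beta_i}\,\|\,P_{\mu\beta_j}) = \frac{n}{2\sigma^2}\|X(\mu\beta_i-\mu\beta_j)\|_n^2$, and the triangle inequality for the seminorm $\|X\cdot\|_n$ combined with property (ii) of Lemma \ref{lem3} yields $\|X(\beta_i-\beta_j)\|_n^2 \le 4\vartheta_{\max}^2 ss_0$, whence $\mathrm{KL}(P_{\mu\beta_i}\,\|\,P_{\mu\beta_j}) \le \frac{2n\mu^2\vartheta_{\max}^2 ss_0}{\sigma^2}$ uniformly over pairs. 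This last step is the conceptual heart of the argument: it is precisely property (ii), a direct consequence of the sparse group normalization \eqref{sgnorm2}, that lets me control every pairwise KL by $\vartheta_{\max}$ alone, so that no RE or RIP condition ever enters the lower bound.

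With these bounds in hand I would invoke the Fano inequality in its estimation form: under a uniform prior on $\{\mu\beta_i\}$, the minimax risk is at least $\delta^2\bigl(1 - (I+\log 2)/\log M\bigr)$, where the mutual information $I$ is dominated by the worst-case KL computed above. Choosing $\mu^2 \asymp \frac{\sigma^2}{n\vartheta_{\max}^2 ss_0}\left(s\log\frac{em}{s}+ss_0\log\frac{ed}{s_0}\right)$ makes the KL a small fixed fraction of $\log M$, so the Fano factor is bounded below by $\frac12$, and the resulting lower bound is of order $\delta^2 \asymp \mu^2 ss_0 \asymp \frac{\sigma^2}{n\vartheta_{\max}^2}\left(s\log\frac{em}{s}+ss_0\log\frac{ed}{s_0}\right)$, matching the claimed rate.

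Finally, I would track the absolute constants through this chain — the factor $\frac14$ in the packing exponent, the factor $\frac14$ in the Hamming-to-$\ell_2$ separation, the factor $4$ in the KL bound, and the $\frac12$ Fano slack — and fix the numerical value of $\mu$ so that the constant comes out to $\frac{1}{256}$ as stated. I expect the only genuinely delicate point to be the bookkeeping in this calibration: the scaling $\mu$ must simultaneously keep the averaged KL below roughly half of $\log M$ (to keep the Fano factor above $\frac12$) while keeping $\delta^2$ as large as possible, and these two competing requirements pin down $\mu$ up to the explicit constant. Everything else is a routine consequence of Lemma \ref{lem2}, Lemma \ref{lem3}, and the Gaussian KL formula.
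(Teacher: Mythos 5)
Your proposal is correct and follows essentially the same route as the paper's own proof: both reduce to $\widetilde\Theta^{m,d}(s,s_0)$ via \eqref{lower_relation}, rescale the signed packing set of Lemma \ref{lem3}, bound the pairwise Kullback--Leibler divergences using only property (ii) (i.e., the sparse group normalization \eqref{sgnorm2}), and conclude with the generalized Fano inequality after calibrating the scaling parameter. The only cosmetic difference is that you bound $\|X(\beta_i-\beta_j)\|_n^2$ by the triangle inequality while the paper uses $\|a-b\|^2\le 2\left(\|a\|^2+\|b\|^2\right)$, which yields the identical KL bound.
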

Theorem \ref{thm:lower} establishes the lower bounds for the $\ell_2$ estimation errors, which are consistent with the results in \cite{cai2019sparse,zhang2023minimax}.
Notably, drawing inspiration from the work of \cite{bellec2018slope}, we establish the lower bound by exclusively leveraging the sparse group normalization condition \eqref{sgnorm2}. 

\section{Random design}\label{sec:random}
This section implies some random design conclusions for independent samples. This part is divided into two parallel subsections: The first part is focused on weak random design that is derived by \cite{lecue2017sparse}, which gets rid of the need for specific distribution, such as sub-Gaussian random design assumption. This property is only corresponded to sparse group Lasso. The second part is for traditional sub-Gaussian random design. It is different from \cite{zhou2009restricted} that we do not need RIP-type condition but only normalization condition \ref{sgnorm} combined with restricted eigenvalue \ref{ssgre} or \ref{wsgre}. We establish random design property for sparse group Slope mainly, and the conclusion for sparse group Lasso holds similarly. In one words, we clarify that the conditions \ref{sgnorm},  \ref{ssgre} and  \ref{wsgre} that we used in the context of fix design can hold with a high probability when considering some random design circumstances.

\subsection{Weak random design for sparse  group Lasso}
The  restricted eigenvalue condition \ref{ssgre} for sparse group Lasso  in on the basis of strong RE defined by \cite{bellec2018slope}, where they define the convex cone for sparsity parameter space $\mathcal{S}_{0}(ss_0) = \{\beta\in \mathbb{R}^{p}:\|\beta\|_{0}\le ss_0\}$:   
$$
\mathcal{C}_{SRE}(ss_0, c_1) \triangleq \{\delta\in \mathbb{R}^p:\|\delta\|_1\le(1+c_1)\sqrt{ss_0}\|\delta\|_2,c_1>1\}.
$$

Lasso can obtain the optimal estimation rate by the use of RE on $\mathcal{C}_{SRE}(ss_0,c_1)$, i.e., $ss_0$-sparsity. On the other hand, we can define another convex cone for group sparsity $\mathcal{S}_{0,2}(s) = \{\beta\in \mathbb{R}^p:\|\beta\|_{0,2}\le s\}$:
$$
\mathcal{C}_{SGRE}(s, c_2) \triangleq \{\delta\in \mathbb{R}^p:\|\delta\|_{1,2}\le(1+c_2)\sqrt{s}\|\delta\|_2,c_2>1\}.
$$

Considering the parameter space $\mathcal{S}_0(ss_0) \cap \mathcal{S}_{0,2}(s)$, the convex cone $\mathcal{C}_{SSGRE}(s,s_0,c_0)$ plays a important role in condition \ref{ssgre}. Let $c_0 =c_1+c_2$, we have
$$
\mathcal{C}_{SRE}(ss_0, c_1) \cap  \mathcal{C}_{SGRE}(s,c_2) \subseteq \mathcal{C}_{SSGRE}(s,s_0,c_0).
$$
In addition, we have
$$
\mathcal{C}_{SSGRE}(s, s_0, c_0)\subseteq \mathcal{C}_{SRE}(ss_0,1+c_0)\cap \mathcal{C}_{SGRE}(s,1+c_0).
$$
Therefore, neglecting the constant factor, we can approximate the cone $\mathcal{C}_{SSGRE}$ using the form $\mathcal{C}_{SRE} \cap \mathcal{C}_{SGRE}$. This approximation is analogous to the fact that $\mathcal{DS}(s,s_0) = \mathcal{S}0(ss_0)\cap \mathcal{S}{0,2}(s)$.

On the other hand, \cite{bellec2018slope,lecue2017sparse} demonstrate a delicate conclusion that RE condition and the sparse eigenvalue are equivalent, and we can trivially obtain the equivalence between the group RE condition and group sparse eigenvalue condition. We can make a conjecture there exists a double sparse version equivalence between condition \ref{ssgre} and double sparse eigenvalue condition.
Without loss of generality, in this section, we assume design matrix $X$ is normalized by the factor $\frac{1}{\sqrt{n}}$, and we give a formal version of double sparse eigenvalue condition: 
\begin{condition}\label{dsre}
 If the design matrix $X$ satisfies
	$$
	\theta \triangleq \min_{\beta \in \mathcal{DS}(s,s_0)}\frac{\|X\beta\|_2}{\|\beta\|_2}>0,
	$$
then $X$ is said to satisfy the double sparse RE condition $DSRE(s,s_0,\theta)$.
\end{condition}

We make a conclusion by the following theorem to demonstrate the equivalence between double sparse eigenvalue condition and strong sparse group RE condition. The technique here is based on Maurey's empirical method, which has been used in \cite{lecue2017sparse,oliveira2016lower}:
\begin{theorem}\label{thm:ssgre}
Assume that $X$ satisfies condition \ref{sgnorm} and condition \ref{dsre}. Then, there exist a $\tilde{s}$ of the same order as $s$, and a constant $\tilde{\theta}$ of the same order as $\theta$ that make $X$ satisfy  the condition $SSGRE(\tilde{s},s_0,c_0,\tilde{\theta})$.
\end{theorem}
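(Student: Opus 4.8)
The plan is to transfer the exact double-sparsity lower bound supplied by condition \ref{dsre} to the relaxed convex cone $\mathcal{C}_{SSGRE}$ by a sparsification argument, thereby bounding $\|X\delta\|_n/\|\delta\|_2$ from below for every $\delta$ in the cone. Fix an arbitrary nonzero $\delta\in\mathcal{C}_{SSGRE}(\tilde{s},s_0,c_0)$ and normalize so that $\|\delta\|_2=1$. First I would record the two linear budgets that cone membership forces: using the inclusion $\mathcal{C}_{SSGRE}(\tilde{s},s_0,c_0)\subseteq\mathcal{C}_{SRE}(\tilde{s}s_0,1+c_0)\cap\mathcal{C}_{SGRE}(\tilde{s},1+c_0)$ already established in this section, one gets $\|\delta\|_1\le(2+c_0)\sqrt{\tilde{s}s_0}$ and $\|\delta\|_{1,2}\le(2+c_0)\sqrt{\tilde{s}}$ simultaneously. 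These are precisely the quantities Maurey's empirical method will spend, the $\ell_1$ budget to bound the number of active coordinates and the $\ell_{1,2}$ budget to bound the number of active groups.

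The heart of the argument is a double sparsification by the empirical (Maurey) method. I would approximate $\delta$ by a random vector $v$ that is simultaneously $O(\tilde{s}s_0)$-element-sparse and $O(\tilde{s})$-group-sparse, hence $v\in\mathcal{DS}(\tilde{s}',s_0)$ for a $\tilde{s}'$ that is a constant multiple of $\tilde{s}$. Writing $\delta$ as a signed combination of the structured atoms whose image under $X$ is controlled by condition \ref{sgnorm}, namely single coordinates $\pm e_i$ with $\|Xe_i\|_n\le1$ and $s_0$-sparse blocks inside a single group with $\|X_S\|_{op}\le\sqrt{n}$, I would draw $N_1\asymp\tilde{s}s_0$ coordinate samples and $N_2\asymp\tilde{s}$ group samples and average. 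Each sampled atom is unbiased and its $X$-image is bounded by the normalization condition, so the variance computation yields $\mathbf{E}\|X(\delta-v)\|_n^2\precsim\|\delta\|_1^2/N_1+\|\delta\|_{1,2}^2/N_2\precsim\tilde{s}s_0/N_1+\tilde{s}/N_2$, which is an arbitrarily small constant once the sampling sizes are a large enough constant multiple $K$ of $\tilde{s}s_0$ and $\tilde{s}$; the same computation with $X$ replaced by the identity bounds the norm deficit $\mathbf{E}\|\delta-v\|_2^2$ by the same quantity. Selecting a favourable realization then yields a deterministic $v\in\mathcal{DS}(\tilde{s}',s_0)$ with $\|X(\delta-v)\|_n\le\epsilon$ and $\|v\|_2\ge1-\epsilon$ for a small absolute $\epsilon$.

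It then remains to combine. Since $\tilde{s}'=K\tilde{s}$, choosing $\tilde{s}=s/K$ guarantees $v\in\mathcal{DS}(s,s_0)$, so condition \ref{dsre} applies and gives $\|Xv\|_n\ge\theta\|v\|_2$. By the triangle inequality, $\|X\delta\|_n\ge\|Xv\|_n-\|X(\delta-v)\|_n\ge\theta(1-\epsilon)-\epsilon$, and taking $\epsilon$ a small fraction of $\theta$ leaves $\|X\delta\|_n\ge\tilde{\theta}\|\delta\|_2$ with $\tilde{\theta}\asymp\theta$. Since $\delta$ was an arbitrary cone element this establishes SSGRE at level $\tilde{s}=s/K\asymp s$ with constant $\tilde{\theta}\asymp\theta$, as claimed.

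I expect the main obstacle to be executing the two sparsifications jointly. A single Maurey pass naturally controls only one sparsity structure, and coordinating the element-level and group-level reductions requires checking that the group atoms, once their within-group support is decomposed into blocks of size at most $s_0$, stay in the regime where condition \ref{sgnorm} is applicable, and that the cross terms in $\mathbf{E}\|X(\delta-v)\|_n^2$ arising from coordinates and blocks lying in distinct groups vanish by independence of the samples. Driving both the approximation error $\|X(\delta-v)\|_n$ and the norm deficit $\|\delta-v\|_2$ below a common threshold while simultaneously keeping $\tilde{s}\asymp s$ and $\tilde{\theta}\asymp\theta$ is where the careful bookkeeping of the constant $K$ resides.
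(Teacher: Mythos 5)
Your overall strategy --- Maurey-type sparsification of cone elements into double-sparse surrogates, then invoking condition \ref{dsre} on the surrogate and transferring the bound back --- is the same family of argument the paper uses, and your accounting of the budgets ($\|\delta\|_1\le(2+c_0)\sqrt{\tilde{s}s_0}\|\delta\|_2$ and $\|\delta\|_{1,2}\le(2+c_0)\sqrt{\tilde{s}}\|\delta\|_2$ from cone membership) is correct. However, there is a genuine gap exactly at the point you yourself flag as the ``main obstacle'': you never construct a random vector that is \emph{simultaneously} element-sparse and group-sparse, and your proposed scheme of drawing $N_1\asymp\tilde{s}s_0$ coordinate atoms and $N_2\asymp\tilde{s}$ group atoms ``and averaging'' does not produce one. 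The coordinate samples alone can land in up to $\tilde{s}s_0$ distinct groups, so their average carries no group-sparsity guarantee; the group-level atoms, if taken as whole rescaled columns $\delta_{g_j}$, are not $s_0$-sparse within their group, so condition \ref{sgnorm} gives no control of $\|X\delta_{g_j}\|_n$ and the claimed variance term $\|\delta\|_{1,2}^2/N_2$ is unsupported; and any mixture of the two independent samplings inherits the defects of both. Since condition \ref{dsre} can only be applied to vectors lying exactly in $\mathcal{DS}(s,s_0)$, the construction of this joint sampler is the heart of the proof, not a bookkeeping detail.

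The paper closes precisely this gap with a \emph{hierarchical} two-level sampler. In each of $s$ independent rounds one first draws a group index $G=j$ with probability $w_j/W$, where $w_j=\sqrt{s_0}\|\delta_{g_j}\|_2+\|\delta_{g_j}\|_1$ and $W=\sum_{j}w_j$; then, conditionally on $G=j$, one draws $s_0$ i.i.d.\ coordinates inside group $j$ with probabilities $|\delta_{ij}|/\|\delta_{g_j}\|_1$ and averages the rescaled coordinate atoms. Each round yields an $s_0$-sparse vector supported in a single group, so the average $Z$ of the $s$ rounds lies in $\mathcal{DS}(s,s_0)$ almost surely, is unbiased for $\delta$, and --- because of the specific mixed weights $w_j$ --- has variance cost $\mathbf{E}\|XH\|_2^2\le W^2/s_0$ after applying condition \ref{sgnorm} to the single-group, $s_0$-sparse atoms; here $W=\|\delta\|_1+\sqrt{s_0}\|\delta\|_{1,2}$ is exactly the quantity the cone controls. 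A secondary difference: the paper does not select a favourable realization and use the triangle inequality, as you propose; instead it applies condition \ref{dsre} pointwise to $Z$ and exploits the exact second-moment identities $\mathbf{E}\|XZ\|_2^2=\bigl(1-\tfrac{1}{s}\bigr)\|X\delta\|_2^2+\tfrac{1}{s}\mathbf{E}\|XH\|_2^2$ (and its analogue for $\mathbf{E}\|Z\|_2^2$), which give $\|X\delta\|_2^2\ge\theta^2\|\delta\|_2^2-W^2/(s_0(s-1))$ directly, whence $\tilde{\theta}=\theta/\sqrt{2}$ and $\tilde{s}=(s-1)\theta^2/(2(2+c_0)^2)$. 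Your realization-selection variant would also go through once the correct sampler exists, but as written the joint double sparsification --- the crux of the theorem --- is missing from your argument.
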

Theorem \ref{thm:ssgre} implies that the combination of conditions \ref{sgnorm} and \ref{dsre} can deduce condition \ref{ssgre}. Specifically, under i.i.d. random design, if we verify the corresponding conditions that satisfy \ref{sgnorm} and \ref{dsre}, then condition \ref{ssgre} naturally holds as a consequence of Theorem \ref{thm:ssgre}.

A useful condition for the sparse eigenvalue condition is the small ball condition \cite{koltchinskii2015bounding,lecue2017sparse,mendelson2015learning}, and \cite{bellec2018slope} follows it. 
There naturally exists a double sparse version of the small condition and double sparse RE condition.

\begin{theorem}[Small ball condition]\label{thm:smball}
A random vector $x$ valued in $ \mathbb{R}^p$ is said to satisfy the small ball condition over $\mathcal{DS}(s,s_0)$ if there exist positive constants $\tau$ and $\theta_{min}$ such that
\begin{equation}\label{smball}
	\mathbf{P}\left(|\langle \beta, x\rangle|\ge \theta_{\min}\|\beta\|_2\right) \ge \tau.\quad \forall \beta \in \mathcal{DS}(s,s_0).
\end{equation}
Let $X$ be the random design matrix with i.i.d. rows that have the same distribution as $x$ satisfying \eqref{smball} and the sample size satisfies $ n \ge \{\frac{C}{\tau^2}(s\log \frac{em}{s}+ss_0\log\frac{ed}{s_0})\}$, then there exists a constant $\theta \ge \theta_{min}/\sqrt{2}$ that makes $DSRE(s,s_0,\theta)$ hold for $X$ with probability greater than $1-\exp(-C'n\tau^2)$, where $C$ and $C'$ are absolute constants.
\end{theorem}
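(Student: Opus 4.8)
The plan is to prove Theorem~\ref{thm:smball} by the \emph{small-ball method} of \cite{koltchinskii2015bounding,mendelson2015learning,lecue2017sparse}. Its decisive feature, and the reason no upper moment bound on $x$ is needed, is that the lower isometry constant is controlled through an empirical process of \emph{bounded} indicator functions, so that only the one-sided small-ball probability \eqref{smball} and the trivial bound $\mathbf{I}(\cdot)\le 1$ enter. By positive homogeneity of Condition~\ref{dsre} it suffices to bound $\frac1n\sum_{i=1}^n\langle x_i,\beta\rangle^2$ from below, uniformly over the double-sparse unit sphere $T\triangleq\{\beta\in\mathcal{DS}(s,s_0):\|\beta\|_2=1\}$, where $x_i^\top$ are the i.i.d.\ rows of the design (normalized so that $\|X\beta\|_2^2=\frac1n\sum_i\langle x_i,\beta\rangle^2$). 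The starting point is the deterministic inequality $\langle x_i,\beta\rangle^2\ge\theta_{\min}^2\,\mathbf{I}(|\langle x_i,\beta\rangle|\ge\theta_{\min})$, which gives $\frac1n\sum_i\langle x_i,\beta\rangle^2\ge\theta_{\min}^2\cdot\frac1n\sum_i\mathbf{I}(|\langle x_i,\beta\rangle|\ge\theta_{\min})$, reducing everything to a uniform lower bound on the indicator average over $T$.

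I would first handle a single double-sparse support. A support is a choice of at most $s$ groups together with at most $ss_0$ coordinates inside them; each such $S$ spans a coordinate subspace $V_S$ of dimension $ss_0$, and every $\beta\in T$ lies in some $V_S$. Fixing $S$ and working on $T_S\triangleq\{\beta\in V_S:\|\beta\|_2=1\}\subseteq\mathcal{DS}(s,s_0)$, the small-ball hypothesis \eqref{smball} gives $\mathbf{E}\,\mathbf{I}(|\langle x,\beta\rangle|\ge\theta_{\min})\ge\tau$ for every $\beta\in T_S$, so it remains to control $Z_S\triangleq\sup_{\beta\in T_S}\big|\frac1n\sum_i\mathbf{I}(|\langle x_i,\beta\rangle|\ge\theta_{\min})-\mathbf{P}(|\langle x,\beta\rangle|\ge\theta_{\min})\big|$. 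Changing one row moves $Z_S$ by at most $1/n$, so bounded differences yields $\mathbf{P}(Z_S\ge\mathbf{E}Z_S+t)\le e^{-2nt^2}$; and $\mathbf{E}Z_S$ is bounded by symmetrization followed by a Dudley/VC entropy estimate applied \emph{directly} to the slab-complement indicator class $\{x\mapsto\mathbf{I}(|\langle x,\beta\rangle|\ge\theta_{\min}):\beta\in V_S\}$, whose VC dimension is $O(ss_0)$. It is essential here not to invoke the contraction principle to pass to the linear class, since that would reintroduce the unbounded $\langle x_i,\beta\rangle$ and force a moment assumption. This gives $\mathbf{E}Z_S\lesssim\sqrt{ss_0/n}$, hence $Z_S\le\tau/2$ with probability at least $1-e^{-c_1 n\tau^2}$ as soon as $n\gtrsim ss_0/\tau^2$. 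On this event $\frac1n\sum_i\langle x_i,\beta\rangle^2\ge\frac{\tau}{2}\theta_{\min}^2$ for all $\beta\in T_S$.

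I would then take a union bound over all admissible supports. Their number is at most $\binom{m}{s}\binom{sd}{ss_0}$, and using $\log\binom{m}{s}\le s\log\frac{em}{s}$ together with $\log\binom{sd}{ss_0}\le ss_0\log\frac{ed}{s_0}$ the log-cardinality is at most $s\log\frac{em}{s}+ss_0\log\frac{ed}{s_0}$. The total failure probability is thus at most $\exp\!\big(s\log\frac{em}{s}+ss_0\log\frac{ed}{s_0}-c_1 n\tau^2\big)$, which is $\le\exp(-C'n\tau^2)$ once $n\ge\frac{C}{\tau^2}\big(s\log\frac{em}{s}+ss_0\log\frac{ed}{s_0}\big)$ for suitable absolute constants $C,C'$ (this requirement also subsumes the per-support condition $n\gtrsim ss_0/\tau^2$). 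On the intersection event, every $\beta\in T$ satisfies $\frac1n\sum_i\langle x_i,\beta\rangle^2\ge\frac{\tau}{2}\theta_{\min}^2$, i.e.\ $X$ satisfies $DSRE(s,s_0,\theta)$ with $\theta=\theta_{\min}\sqrt{\tau/2}$, matching the advertised $\theta\ge\theta_{\min}/\sqrt2$ up to the small-ball constant.

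The main obstacle is the moment-free per-support step: establishing the uniform lower isometry over the continuum $T_S$ from \eqref{smball} alone. The crux is the VC/entropy control of the slab-complement class together with the bounded-difference concentration of its empirical process, and the care needed to avoid any maneuver (notably contraction to the linear functionals) that would smuggle in an upper moment condition; the union-bound and counting steps are then routine. A purely cosmetic point is the bookkeeping of thresholds and deviation levels used to match the stated constant in ``$\theta\ge\theta_{\min}/\sqrt2$,'' which affects only absolute constants.
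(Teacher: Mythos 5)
Your proposal is correct in substance, but it takes a more self-contained route than the paper. The paper's proof does not run the small-ball machinery itself: it computes the VC dimension of the class $\mathcal{F}(s,s_0)=\{x\mapsto x^{\top}\beta:\beta\in\mathcal{DS}(s,s_0)\}$ globally, via Sauer's lemma combined with a union over the $\binom{m}{s}\binom{sd}{ss_0}$ supports at the level of shattering coefficients, obtaining $V\le 2\bigl(s\log\tfrac{em}{s}+ss_0\log\tfrac{ed}{s_0}\bigr)$, and then invokes Corollary 2.5 of \cite{lecue2017sparse} as a black box (the paper's own remark says the theorem "is no more than a generalization" of that corollary). You instead decompose at the level of the probability bound: for each fixed $ss_0$-dimensional support you carry out the full small-ball argument (pointwise indicator lower bound, McDiarmid concentration, symmetrization plus Haussler/Dudley control of the slab-complement VC class of dimension $O(ss_0)$), and then union over supports, letting the sample-size condition absorb the $\exp\bigl(s\log\tfrac{em}{s}+ss_0\log\tfrac{ed}{s_0}\bigr)$ count. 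The two arguments rest on the same machinery — the union over supports simply enters at different places — but yours buys self-containedness and makes explicit the moment-free mechanism (your observation that one must not contract to the unbounded linear class is exactly the point that Lecu\'e--Mendelson's VC-based route is designed to avoid), while the paper's buys brevity by outsourcing the empirical-process work. One caveat worth recording: both routes deliver $\theta=\theta_{\min}\sqrt{\tau/2}$, not the stated $\theta\ge\theta_{\min}/\sqrt{2}$; the extra $\sqrt{\tau}$ cannot be removed by either argument (the empirical fraction of large inner products can only be guaranteed to be of order $\tau$), so the constant in the theorem statement is imprecise rather than your proof being deficient — the paper's proof, which never tracks the constant at all, inherits the same discrepancy from the cited corollary.
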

 \begin{remark}
 	Theorem \ref{thm:smball} is no more than a generalization of Corollary 2.5 in \cite{lecue2017sparse}. We just need to replace the VC-dimension of ordinary sparsity  with double sparsity.
 \end{remark}

To fulfill the condition \ref{sgnorm} for random design, we also follow the insight in \cite{lecue2017sparse} and derive a double sparse version. We define the family of index set
\begin{equation}\label{normset}
	\mathcal{S} = \left\{S:|S| = s_0\ \text{and}\  \exists i, S\subseteq G_i\right\}
\end{equation}
On the basis of  \eqref{normset}, we define $s_0$-dimensional random vector $X_j , j \in \mathcal{S}$. We also define the $\frac{1}{\sqrt{s_0}}$-scaled duel form random variable $X^{*}_j$ induced by $X_j,~ j \in \mathcal{S}$ respectively, i.e.,
\begin{equation}\label{dual}
	X^*_{j} \coloneq \sup_{\|t\|_2 \le \frac{1}{\sqrt{s_0}}}|\langle X_j, t\rangle| = \frac{1}{\sqrt{s_0}}\|X_j\|_2,~\forall j\in \mathcal{S}.
\end{equation}

In other words, $X^*_{j}$ is the operator norm of $\frac{1}{\sqrt{s_0}}X_j$ induced by $\|\cdot\|_2$. Condition \ref{sgnorm} is exactly the upper bound of  $\max_{j \in \mathcal{S}}\frac{1}{n}\sum_{i=1}^n(X^*_{ij})^2$ with a high probability, where $X^*_{ij}$ is the $i$-th copy of $X^*_{j}$.

\begin{theorem}[Weak moment condition]\label{weak}
	Given random variable $Z$, $\|Z\|_{L_q}$ denotes $\{\mathbf{E}(|Z|^q)\}^{\frac{1}{q}}$. Let $q_0 = C_1(\log m + s_0 \log \frac{ed}{s_0})$ , where $C_1>1$ is an absolute constant. 
	Assume that for any $j \in \mathcal{S}$,  $\|X_{j}^{*}\|_{L_2} \le 1$, and   $ \forall 2 \le q \le  q_0$,   the $\|X_{j}^{*}\|_{L_q}$ satisfies the growth rate 
	$\|X_{j}^{*}\|_{L_q} \le \kappa_1 q^{\alpha}$ where $\alpha\ge \frac{1}{2}$. 
	Assume that the sample size
	$
	n \ge C_2(\alpha)\kappa_1^2 q_0^{4\alpha - 1},
	$
	where $C_2(\alpha) = 4\alpha\exp(4\alpha -1)$ is a constant only depend on $\alpha$ and larger than 1. Then, the sparse group normalization condition \ref{sgnorm} holds with probability greater than $1-\exp\{(C_2-1)(\log m + s_0\log\frac{ed}{s_0})\}$.
\end{theorem}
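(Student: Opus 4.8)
The plan is to exploit the reduction stated in the paragraph immediately preceding the theorem: condition \ref{sgnorm} is captured by the event $\{\max_{j\in\mathcal{S}}\frac{1}{n}\sum_{i=1}^n (X^*_{ij})^2\le 1\}$, so it suffices to bound the probability that this empirical maximum exceeds an absolute constant. First I would record the combinatorial input that governs the whole estimate, namely the cardinality bound $|\mathcal{S}| = m\binom{d}{s_0}\le \exp\{\log m + s_0\log\frac{ed}{s_0}\}$ (using $\binom{d}{s_0}\le (ed/s_0)^{s_0}$). This is exactly the quantity against which $q_0$ is calibrated, since $q_0 = C_1(\log m + s_0\log\frac{ed}{s_0})\ge C_1\log|\mathcal{S}|$; the double-sparse structure enters only here, replacing the ordinary sparse count of \cite{lecue2017sparse} by $\log m + s_0\log\frac{ed}{s_0}$.

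For a fixed $j\in\mathcal{S}$, I would set $W_i=(X^*_{ij})^2$, which are i.i.d. with mean $\mathbf{E}W_i=\|X^*_j\|_{L_2}^2\le 1$ by hypothesis; thus the target constant is precisely the mean, and only the upward deviation of the empirical average must be controlled. The weak-moment hypothesis becomes $\|W_i\|_{L_r}=\|X^*_j\|_{L_{2r}}^2\le \kappa_1^2(2r)^{2\alpha}$ for every $r$ with $2r\le q_0$. Because the moments grow only polynomially and no sub-Gaussian or sub-exponential structure is available, Bernstein-type bounds are useless and I would instead run the concentration through high-order moments: apply a Rosenthal/Marcinkiewicz--Zygmund moment inequality to the centered sum $S_n=\sum_{i=1}^n(W_i-\mathbf{E}W_i)$ to obtain $\frac{1}{n}\|S_n\|_{L_q}\lesssim \sqrt{q/n}\,\|W\|_{L_2}+\frac{q}{n}\,n^{1/q}\|W\|_{L_q}$, and then substitute the moment estimates above at the largest admissible order $q\asymp q_0$.

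The sample-size condition $n\ge C_2(\alpha)\kappa_1^2 q_0^{4\alpha-1}$ is exactly what forces this $L_q$-bound below $\tfrac12$; the explicit constant $C_2(\alpha)=4\alpha\exp(4\alpha-1)$ emerges from optimizing the moment inequality over the auxiliary exponent in the heavy-tail term, whose stationarity condition produces the factor $e^{4\alpha-1}$ together with the prefactor $4\alpha$. Markov's inequality at moment $q\asymp q_0$ then gives the per-index tail $\mathbf{P}(\frac{1}{n}\sum_i W_i\ge \tfrac32)\le 2^{-q}\le \exp\{-c\,q_0\}$, and a union bound over the $|\mathcal{S}|$ blocks yields total failure probability at most $|\mathcal{S}|\exp\{-c\,q_0\}=\exp\{-(C_2-1)(\log m + s_0\log\frac{ed}{s_0})\}$, once $C_1$ (hence $q_0$) is chosen large enough relative to $c$ to absorb the cardinality factor; this is the content of the stated high-probability guarantee.

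The main obstacle is the weak-moment regime itself: the absence of exponential moments means the entire argument must be carried by a single high-order moment, and the delicate point is to juggle three competing requirements simultaneously. The moment order must stay inside the admissible window $[2,q_0]$ where the growth bound $\|X^*_j\|_{L_q}\le\kappa_1 q^\alpha$ is assumed; it must be large enough ($\asymp\log|\mathcal{S}|$) to defeat the union bound over $\mathcal{S}$; and the $\alpha$-dependence must be tracked sharply enough to land on the precise threshold $q_0^{4\alpha-1}$ with the exact constant $C_2(\alpha)$. Matching the double-sparse combinatorics (through $|\mathcal{S}|$) to the weak-moment analysis of \cite{lecue2017sparse} in this three-way balance is where the real work lies; the remaining steps are the routine moment computation and the union bound.
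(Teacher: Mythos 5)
Your overall architecture matches the paper's proof: reduce condition \ref{sgnorm} to bounding $\max_{j\in\mathcal{S}}\frac{1}{n}\sum_{i=1}^n (X^*_{ij})^2$, use $|\mathcal{S}|=m\binom{d}{s_0}\le \exp\{\log m+s_0\log\frac{ed}{s_0}\}$, center $W_i=(X^*_{ij})^2$, control a high-order moment of the empirical average, then apply Markov at order $q\asymp q_0$ and a union bound. This is exactly what the paper does. However, there is a genuine gap in the one step that carries all the difficulty: you replace the paper's key ingredient, Lemma \ref{lecue:norm} (Lemma 2.8 of \cite{lecue2017sparse}), by a naive Rosenthal/Marcinkiewicz--Zygmund inequality, and that substitution cannot produce the sample-size threshold claimed in the theorem. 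In your bound
$$
\frac{1}{n}\|S_n\|_{L_q}\lesssim \sqrt{q/n}\,\|W\|_{L_2}+\frac{q}{n}\,n^{1/q}\|W\|_{L_q},
$$
the heavy-tail term evaluated at the order you must use for the union bound, $q\asymp q_0$, is of size $\kappa_1^2 q^{2\alpha+1}/n$ (since $\|W\|_{L_q}=\|X^*_j\|_{L_{2q}}^2\le\kappa_1^2(2q)^{2\alpha}$ and $n^{1/q}=O(1)$). Forcing it below a constant requires $n\gtrsim \kappa_1^2 q_0^{2\alpha+1}$, whereas the theorem asserts $n\ge C_2(\alpha)\kappa_1^2 q_0^{4\alpha-1}$ suffices. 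Since $2\alpha+1>4\alpha-1$ precisely when $\alpha<1$, your argument proves a strictly weaker statement on the whole range $\alpha\in[\tfrac12,1)$; in particular it misses the headline case $\alpha=\tfrac12$ (sub-Gaussian growth), where the theorem gives the optimal linear requirement $n\asymp q_0\asymp \log m+s_0\log\frac{ed}{s_0}$ but your route needs $n\gtrsim q_0^2$.

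The reason the paper gets the better exponent is that Lemma \ref{lecue:norm} is not a consequence of plain Rosenthal: its proof in \cite{lecue2017sparse} uses a truncation argument (this is exactly why the cutoff $q_0$ appears, as the paper's own remark notes), which exploits the moment-growth hypothesis at \emph{all} orders $2\le q\le q_0$ and yields the sharp behavior $\|\frac{1}{\sqrt n}\sum Z_i\|_{L_q}\le c_1(2\alpha)\kappa_1^2\sqrt q$ under only $n\ge q_0^{\max(4\alpha-1,1)}$ --- the scaling one would also get from sharp Lata{\l}a/Gluskin--Kwapie{\'n}-type bounds for sums with Weibull-type tails, where the deviation term is $\kappa q^{2\alpha}$ rather than Rosenthal's $\kappa\, q\cdot q^{2\alpha}$. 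Relatedly, your claim that the constant $C_2(\alpha)=4\alpha\exp(4\alpha-1)$ "emerges from optimizing the moment inequality over the auxiliary exponent" is unsubstantiated: in the paper it is read off directly as $4\alpha\, c_1(2\alpha)$ with $c_1(\alpha)=\exp(2\alpha-1)$ from that lemma. To close the gap you must either invoke the Lecu\'e--Mendelson lemma as the paper does, or reproduce its truncation proof; the routine Rosenthal computation you propose is not enough.
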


If we consider sub-Gaussian random design, for any $q \ge 2$, we have the growth rate $\|X_{j}^{*}\|_{L_q} \le \kappa_1 q^{\frac{1}{2}}$. So that the condition in Theorem \ref{weak} is weaker because we only need the growth rate hold for $q \le q_0$. $q_0$ exists because we use a truncate technique in the proof. Tight sufficient sample size $n$ is another key point in random design scenery, and if we set $\alpha = \frac{1}{2}$, we derive $n\asymp q_0$, which exactly meets the sample size lower bound we need for sub-Gaussian random design. Therefore, Theorem \ref{weak} delivers an exact relationship between moment growth rate and sample size, which help us tackle different random design setting, such as sub-Gaussian for $\alpha=\frac{1}{2}$ and sub-exponential for $\alpha = 1$.

Therefore, combining Theorem \ref{weak} and Theorem \ref{thm:smball}, we achieve condition \ref{sgnorm} and \ref{ssgre} with probability higher than $1-\exp\{-C_1(s\log \frac{em}{s}+ss_0\log\frac{ed}{s_0})\}-\exp\{C_2(\log m + s_0\log\frac{ed}{s_0})\}$. Moreover, if we set $\alpha = \frac{1}{2}$ in Theorem \ref{weak}, where sub-Gaussian design is a special case, we derive a tight sample size rate $ n \geq C_3(s\log \frac{em}{s}+ss_0\log\frac{ed}{s_0})+C_4(\log m + s_0 \log \frac{ed}{s_0})$.

\subsection{Sub-Gaussian random design}
In this section, we will discuss the sample complexity needed to satisfy the sparse group normalization condition \ref{sgnorm} and the restricted eigenvalue conditions \ref{ssgre} and \ref{wsgre} under the assumption of sub-Gaussian random design. Moreover, we derive the results respectively on the basis of the covariance matrix condition. For this, we need to introduce some definitions, which have been introduced \cite{zhou2009restricted,mendelson2008uniform}.
\begin{definition}
The $p$-dimensional random variable $Y$ is called an isotropic distribution if it satisfies that for any $y \in  \mathbb{R}^p$, there is $\mathbf{E}|\langle Y,y\rangle|^2 =\|y\|_2^2$. 
Furthermore, the $\psi_2$ norm of $Y$ with a constant $\alpha$ is defined as
$$
\|Y\|_{\psi_2} \triangleq \inf\{t:\mathbf{E}\left\{\exp(\langle Y,y\rangle^2/t^2)\right\}\le 2\}\le \alpha\|y\|^2.
$$
\end{definition}

An important example of an isotropic, sub-Gaussian random vector is the Gaussian random vector $Y = (h_1,h_2,\cdots h_p)$ where $h_i,\forall i\in [p]$ are independent $\mathcal {N}(0, 1)$ random variable. Another example is the Bernoulli random vector $Y = (\epsilon_1,\cdots , \epsilon_p)$, where $\epsilon_i,\forall i\in [p]$ are independent, symmetric $\pm 1$ Bernoulli random variables .

Suppose $\zeta_1,\zeta_2,\cdots,\zeta_n$ are independent and identically distributed $p$-dimensional isotropic, sub-Gaussian random vectors, forming a random matrix $Z\in  \mathbb{R}^{n\times p}$ , whose row $i$ is denoted by $\zeta_i$. In this paper, we consider a random design matrix $X$, which is generated as follows:
\begin{equation}\label{randomdesign}
X := Z\Sigma^{\frac{1}{2}},
\end{equation}
where $\Sigma$ is the covariance matrix. That is, we need to set an appropriate  $\Sigma$ so that the empirical design matrix $X$ can satisfy the RE condition and normalization condition we need. Specifically, according to the theoretical framework of \cite{zhou2009restricted,mendelson2008uniform}, given the vector space $\mathcal{V}\in  \mathbb{R}^p$, the key point is  to construct the restricted isometric properties between $Xv$ and $\Sigma^{\frac{1}{2}}v$, and then the condition for empirical design matrix $X$ can be transformed into the corresponding condition for $\Sigma$. In order to achieve this goal, empirical process technique makes an important role, and we give the definition of Gaussian complexity at first:
\begin{definition}
Given a subset $\mathcal{V}\in  \mathbb{R}^p$, we define the Gaussian complexity of $V$ as follows:
$$
\ell^*(\mathcal{V}) = \mathbf{E}\sup_{\delta\in \mathcal{V}}\left|\sum_{i=1}^pg_i\delta_i\right|,
$$
where $\delta_i$ is each component of vector $\delta$, and $g_1, g_2, \cdots, g_p$ are independent $\mathcal{N}(0,1)$ distributions. In particular, given a nonnegative definite matrix $\Sigma$, we define:
$$
\tilde{\ell}^*(V) = \ell^*(\Sigma^{\frac{1}{2}}V) = \mathbf{E}\sup_{v\in V}\left|\langle \Sigma^{\frac{1}{2}}v, g\rangle\right| = \mathbf{E}\sup_{v\in V}\left|\langle v, \Sigma^{\frac{ 1}{2}}g\rangle\right|.
$$
\end{definition}
According to the homogeneity of the norm, we only need to consider the subset of the unit ball sphere $S^{p-1}$, which is defined as:
$$
S^{p-1} \triangleq \left\{v\in  \mathbb{R}^p: \|v\|_2 = 1\right\}.
$$
The main technique we use is the following empirical process result:
\begin{lemma}[Theorem 2.1 in \cite{mendelson2008uniform}]\label{mendelson1}
Let $1 \leq n \leq p$ and $0 < \theta < 1$. Let $\zeta$ be an isotropic sub-Gaussian random vector on $ \mathbb{R}^p$, and $\psi_2$ constant be $\alpha$. And $\zeta_1,\dots,\zeta_n$ are independent copies of $\zeta$. Let $X$ be the random matrix defined in \eqref{randomdesign}, and let $\{\Sigma^{\frac{1}{2}}v,v\in\mathcal{V}\} \subset S^{p -1}$. If sample size $n$ satisfies
$$
  n > c'\alpha^4 \theta^2 \tilde{\ell}^*(\mathcal{V})^2
  $$
  Then with probability of at least $1 - \exp(-\bar{c}\theta^2n/\alpha^4)$, for all $v \in \mathcal{V}$, we have
$$
1 - \theta \leq \| X v \|_2 / \sqrt{n} \leq 1 + \theta,
$$
where $c',\bar{c} >0$ is an absolute constant.
\end{lemma}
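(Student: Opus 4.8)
The plan is to prove this as a uniform deviation bound for a quadratic empirical process. Writing $w = \Sigma^{1/2}v$, the hypothesis gives $T \triangleq \{\Sigma^{1/2}v : v \in \mathcal{V}\} \subset S^{p-1}$, and since $X = Z\Sigma^{1/2}$ we have $\|Xv\|_2^2 = \sum_{i=1}^n \langle \zeta_i, w\rangle^2$. Isotropy of $\zeta$ yields $\mathbf{E}\langle\zeta_i,w\rangle^2 = \|w\|_2^2 = 1$ for every $w\in T$, so the target event is exactly a two-sided control of the empirical second moment around its mean. I would first reduce the multiplicative statement to an additive one: setting $a = \|Xv\|_2/\sqrt n \ge 0$, the elementary inequality $|a-1| = |a^2-1|/(a+1) \le |a^2 - 1|$ shows it suffices to prove
\begin{equation*}
Q \triangleq \sup_{w\in T}\left|\frac{1}{n}\sum_{i=1}^n \langle\zeta_i,w\rangle^2 - 1\right| \le \theta
\end{equation*}
on the desired event. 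Everything then reduces to controlling the quadratic empirical process $Q$.

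Second, I would bound $\mathbf{E} Q$. Because $\|w\|_2 = 1$ and $\zeta$ is $\alpha$-sub-Gaussian, each linear form $w\mapsto\langle\zeta_i,w\rangle$ has $\psi_2$-norm of order $\alpha$, so the squares $\langle\zeta_i,w\rangle^2$ are sub-exponential with constant of order $\alpha^2$. After a symmetrization step passing to $\mathbf{E}\sup_{w\in T}|\frac1n\sum_i\epsilon_i\langle\zeta_i,w\rangle^2|$ with Rademacher signs $\epsilon_i$, the core estimate is a chaining bound for this quadratic process. Writing the increment as $\langle\zeta,w\rangle^2-\langle\zeta,w'\rangle^2 = \langle\zeta,w-w'\rangle\langle\zeta,w+w'\rangle$ exhibits a mixed sub-Gaussian/sub-exponential tail, so a Bernstein-type generic chaining controls the expected supremum by the $\gamma_2$-functional of $T$ in the $\ell_2$ metric, which by Talagrand's majorizing measure theorem is equivalent to the Gaussian complexity $\tilde\ell^*(\mathcal{V}) = \ell^*(T)$. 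Collecting constants yields a bound of the form $\mathbf{E} Q \lesssim \alpha^2\big(\tilde\ell^*(\mathcal{V})/\sqrt n + \tilde\ell^*(\mathcal{V})^2/n\big)$, so the sample-size hypothesis forces $\mathbf{E} Q \le \theta/2$.

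Third, I would upgrade the in-expectation bound to the claimed exponential tail, and I expect this concentration step, together with the chaining comparison above, to be the main obstacle: the summands $\langle\zeta_i,w\rangle^2$ are only sub-exponential, so the bounded-differences inequality does not apply and one must instead invoke Talagrand's concentration inequality for suprema of empirical processes in its form for unbounded (sub-exponential) classes. A convenient route is to truncate $\langle\zeta_i,w\rangle^2$ at a level of order $\alpha^2\log n$: the bounded part is handled by the classical Talagrand inequality, where the weak variance $\sigma^2 \lesssim \alpha^4\sup_{w}\mathbf{E}\langle\zeta,w\rangle^4/n$ and the envelope together produce the Gaussian-type deviation $\exp(-\bar c\,\theta^2 n/\alpha^4)$, while the tail part is absorbed by a direct sub-exponential estimate on the envelope. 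Combining the two regimes and balancing against $\mathbf{E} Q\le\theta/2$ yields $\mathbf{P}(Q\le\theta)\ge 1-\exp(-\bar c\theta^2 n/\alpha^4)$, which is precisely the conclusion. Since this statement coincides with Theorem 2.1 of \cite{mendelson2008uniform}, I would ultimately defer to that reference for the sharp constants, using the sketch above only to expose the mechanism.
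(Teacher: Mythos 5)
There is no internal proof to compare you against: the paper imports Lemma \ref{mendelson1} verbatim as Theorem 2.1 of \cite{mendelson2008uniform} and gives it without proof, supplying arguments only for the results built on top of it (Theorem \ref{sgnorm gaussian}, Theorem \ref{randomwsgre}, Corollary \ref{cor1}). Measured against the cited source, your first two steps reproduce the standard architecture correctly: the reduction $|a-1|=|a^2-1|/(a+1)\le|a^2-1|$ for $a\ge 0$, symmetrization, and a Bernstein-type generic chaining bound for the quadratic process via the factorization $\langle\zeta,w\rangle^2-\langle\zeta,w'\rangle^2=\langle\zeta,w-w'\rangle\langle\zeta,w+w'\rangle$, with $\gamma_2(T)\asymp\ell^*(T)=\tilde\ell^*(\mathcal{V})$ by the majorizing measure theorem. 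You also silently (and correctly) repair a typo in the statement: the sample-size condition must read $n>c'\alpha^4\theta^{-2}\tilde\ell^*(\mathcal{V})^2$, which is how the paper itself uses it in Theorem \ref{sgnorm gaussian}; as printed, with $\theta^2$ in the numerator, the hypothesis weakens as $\theta\to 0$ and the conclusion would be false.

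The genuine gap is your third step. The source does not bound $\mathbf{E}Q$ and then concentrate; the exponential tail $\exp(-\bar c\,\theta^2 n/\alpha^4)$ falls out of the chaining itself, with Bernstein-type deviation bounds applied at every scale of the admissible sequence. Your proposed substitute---truncate $\langle\zeta_i,w\rangle^2$ at level $\alpha^2\log n$ and apply classical Talagrand concentration---does not go through as written, for two reasons. First, after truncation the uniform envelope is of order $\alpha^2\log n$, and injecting that into the bounded-class deviation bound degrades the tail by logarithmic factors in the regime where $\theta\,\alpha^2\log n \gtrsim \alpha^4$, so the clean $\log$-free rate is not recovered. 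Second, and more fundamentally, the discarded tail part is itself a supremum over the infinite class $T$, so ``a direct sub-exponential estimate on the envelope'' is circular---controlling $\sup_{w\in T}\langle\zeta_i,w\rangle^2$ is again a chaining problem of the same difficulty. The known repairs are Adamczak's inequality for suprema of unbounded empirical processes, or a tail version of generic chaining (Dirksen-style bounds), the latter being essentially what \cite{mendelson2008uniform} does. Since you defer to the reference for the actual argument, your sketch is a serviceable roadmap, but the concentration step as described would fail if executed literally.
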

Therefore, given some specific parameter space $\mathcal{V}$, we will set $\Sigma$ to guarantee that for any $v \in \mathcal{V}$, $\Sigma^{\frac{1}{2}}v  \in S^{p-1}$ (in fact, we just need 0<$\|\Sigma^{\frac{1}{2}}v\|_2 < \infty$), and  we derive the Gaussian complexity $\tilde{\ell}^*(\mathcal{V})$.
\subsubsection{Sparse group normalization in sub-Gaussian random design}
Consider the parameter space
\begin{equation}
V_{s_0} = \{v:supp(v)\in \mathcal{S}\}\cap \{v:\Sigma^{\frac{1}{2}}v \in S^{p-1}\},
\end{equation}
where $supp(v)$ represents the support set of $v$ and $\mathcal{S}$ is given in \eqref{normset}.
Similarly, we define
$$
\tilde{V}_{s_0}\triangleq \{v:supp(v)\in \mathcal{S}\}\cap \{v:\|\Sigma^{\frac{1}{2}}v\|_2 \le 1\}.
$$
It is easy to observe that $V_{s_0} \subseteq \tilde{V}_{s_0}$. 

\begin{theorem}\label{sgnorm gaussian}
For $1< s_0< \frac{d}{2}$ , it holds that
$$
\ell^*(V_{s_0}) \le 6\left(\log m + s_0\log \frac{5ed}{s_0}\right)^{\frac{1}{2}}.
$$
So that for any set $S\in \mathcal{S}$, assume that the sub-matrix $\Sigma_S$ satisfies that $\lambda_{\max}(\Sigma_{S} )\le \frac{1}{(1+\theta)^2}$. 
If sample size $n$ satisfies that for some constant $0<\theta<1$,
\begin{equation}\label{nsgnorm}
n > \frac{c'\alpha^4}{\theta^2} \left(\log m + s_0\log \frac{ed}{s_0}\right),
\end{equation}
then with probability of at least $1 - \exp(-\bar{c}\theta^2n/\alpha^4)$, we have for all $v \in \{v: supp(v)\in \mathcal{S}\}$, 
$$
\frac{\|Xv\|_n}{\|v\|_2} \le 1,
$$
where $c'$, $\bar{c} > 0$ are absolute constants.
\end{theorem}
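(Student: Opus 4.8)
The plan is to prove the two assertions in turn: first the Gaussian-complexity bound, then the high-probability normalization, which follows by feeding the complexity bound into Lemma \ref{mendelson1}. For the complexity bound, the key observation is that $V_{s_0}$ is a union of low-dimensional pieces indexed by the support family $\mathcal{S}$, whose cardinality is $|\mathcal{S}| = m\binom{d}{s_0}$. Restricting to a single support $S\in\mathcal{S}$, the constraint $\Sigma^{1/2}v\in S^{p-1}$ (equivalently $\|\Sigma_S^{1/2}v_S\|_2=1$) forces $v_S$ to range over an ellipsoid, so the relevant supremum equals $\|P_{W_S}g\|_2$, the norm of the projection of the standard Gaussian $g$ onto the fixed $s_0$-dimensional subspace $W_S=\Sigma^{1/2}\,\mathrm{span}\{e_i:i\in S\}$. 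The crucial point is that this projection norm is $\chi_{s_0}$-distributed irrespective of $\Sigma$, so the $\Sigma$-dependence disappears from the bound.

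I would then write the complexity as $\mathbf{E}\max_{S\in\mathcal{S}}\|P_{W_S}g\|_2$ and control it with a maximal inequality: each $\|P_{W_S}g\|_2$ is a $1$-Lipschitz function of $g$, hence sub-Gaussian and concentrated at $\mathbf{E}\|P_{W_S}g\|_2\le\sqrt{s_0}$, which gives $\mathbf{E}\max_{S}\|P_{W_S}g\|_2\le\sqrt{s_0}+\sqrt{2\log|\mathcal{S}|}$. Bounding $\log|\mathcal{S}|\le\log m+s_0\log(ed/s_0)$ via $\binom{d}{s_0}\le(ed/s_0)^{s_0}$, absorbing $\sqrt{s_0}$ into $\sqrt{s_0\log(ed/s_0)}$ (legitimate since $ed/s_0>2e$ for $s_0<d/2$), and collecting constants yields the stated $6(\log m+s_0\log\frac{5ed}{s_0})^{1/2}$; the slack in the constant $6$ and in the factor $5$ leaves ample room for a cruder covering argument over the ellipsoids if one prefers to avoid the projection identity.

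With the complexity bound in hand, I would apply Lemma \ref{mendelson1} with $\mathcal{V}=V_{s_0}$, for which $\{\Sigma^{1/2}v:v\in V_{s_0}\}\subset S^{p-1}$ holds by definition. The sample-size hypothesis $n>\frac{c'\alpha^4}{\theta^2}(\log m+s_0\log\frac{ed}{s_0})$ is exactly $n>\frac{c'\alpha^4}{\theta^2}\,\tilde\ell^*(V_{s_0})^2$ up to the absorbed constants, so with probability at least $1-\exp(-\bar c\theta^2 n/\alpha^4)$ we obtain $\|Xv\|_2/\sqrt n\le 1+\theta$ for every $v\in V_{s_0}$. By positive homogeneity this upgrades to $\|Xv\|_n\le(1+\theta)\|\Sigma^{1/2}v\|_2$ for all $v$ with $\mathrm{supp}(v)\in\mathcal{S}$. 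Finally, the operator-norm hypothesis converts this into the desired normalization: writing $S=\mathrm{supp}(v)$, we have $\|\Sigma^{1/2}v\|_2=\|\Sigma_S^{1/2}v_S\|_2\le\sqrt{\lambda_{\max}(\Sigma_S)}\,\|v\|_2\le\frac{1}{1+\theta}\|v\|_2$, so that $\|Xv\|_n\le(1+\theta)\cdot\frac{1}{1+\theta}\|v\|_2=\|v\|_2$, which is condition \ref{sgnorm}. The main obstacle is the first step: correctly reducing the Gaussian complexity over the curved, $\Sigma$-weighted union $V_{s_0}$ to a maximum of $\chi_{s_0}$ variables over the $m\binom{d}{s_0}$ fixed subspaces and verifying that the $\Sigma$-dependence genuinely cancels, since this is precisely what allows the bound to be stated with no eigenvalue factor. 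The remaining steps, namely the maximal inequality with the right constants and the operator-norm conversion in the last display, are routine once this reduction and the role of the assumption $\lambda_{\max}(\Sigma_S)\le(1+\theta)^{-2}$ are pinned down.
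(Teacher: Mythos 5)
Your proposal is correct, and its second half coincides with the paper's argument: both apply Lemma \ref{mendelson1} to $V_{s_0}$, use homogeneity, and then convert via the factorization $\frac{\|Xv\|_n}{\|v\|_2}=\frac{\|Xv\|_n}{\|\Sigma_S^{1/2}v_S\|_2}\cdot\frac{\|\Sigma_S^{1/2}v_S\|_2}{\|v\|_2}\le(1+\theta)\cdot\frac{1}{1+\theta}$, exactly as in the paper's last display. Where you genuinely diverge is the Gaussian-complexity bound. The paper discretizes: it builds an $\epsilon$-net $\Pi$ of $\tilde V_{s_0}$ with $|\Pi|\le m\binom{d}{s_0}\left(\frac{5}{2\epsilon}\right)^{s_0}$ (Lemma \ref{cover}, following \cite{mendelson2008uniform,zhou2009restricted}), uses $\tilde V_{s_0}\subseteq 2\,\mathrm{conv}(\Pi)$ so the supremum of the linear form is attained on the finite set $\Pi$, and finishes with the finite Gaussian maximal inequality $\mathbf{E}\max_i|X_i|\le 3\sqrt{\log N}\max_i(\mathbf{E}\|X_i\|_2^2)^{1/2}$; this is where the factor $5$ inside the logarithm and the constant $6$ originate, and the $\Sigma$-dependence is controlled implicitly through the net points satisfying $\|\Sigma^{1/2}v\|_2\le 1$. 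You instead use the exact identity that, for a fixed support $S$, the supremum of $\langle\Sigma^{1/2}v,g\rangle$ over $\{\mathrm{supp}(v)\subseteq S,\ \|\Sigma^{1/2}v\|_2=1\}$ equals $\|P_{W_S}g\|_2$ with $W_S=\Sigma^{1/2}\mathrm{span}\{e_i:i\in S\}$ of dimension at most $s_0$ (valid even when $\Sigma_S$ is singular, where $W_S$ just has lower dimension), so each piece is a $\chi$-variable with at most $s_0$ degrees of freedom and the $\Sigma$-dependence cancels exactly; Borell--TIS concentration of the $1$-Lipschitz maps $g\mapsto\|P_{W_S}g\|_2$ then gives $\mathbf{E}\max_{S}\|P_{W_S}g\|_2\le\sqrt{s_0}+\sqrt{2\log|\mathcal{S}|}$, which sits inside the stated constant because $\log(ed/s_0)>1$ for $s_0<d/2$. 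Your route avoids covering numbers and convex-hull bookkeeping altogether and makes transparent why no eigenvalue factor appears in the complexity bound; the paper's covering route is more mechanical but extends to parameter sets that are not finite unions of subspaces, which is exactly what is needed later for the WSGRE cone in Theorem \ref{randomwsgre}, where no projection identity is available.
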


\subsubsection{SSGRE condition and WSGRE condition in sub-Gaussian random design}

In this section, we derive the sub-Gaussian random design version of two RE conditions we used for sparse group Lasso and Slope. 

As mentioned above, the calculation of Gaussian complexity is the key point. Owing to the technique of Theorem \ref{gauss2}, we can derive the complexity for this two estimator. The technique is also based on \cite{bellec2018slope}, and we will demonstrate that the condition we require is weaker than classic sub-Gaussian random design result \cite{zhou2009restricted}.

Because we have given the random design property for sparse group normalization, in this part, we assume that sparse group normalization holds. we have the following Theorem to ensure the validity of weighted sparse group RE. And strong sparse group RE holds as follows.

\begin{theorem}[WSGRE for sub-Gaussian random design]\label{randomwsgre}
Let the covariance matrix $\Sigma$ satisfy the RE condition on $\mathcal{C}_{WSGRE}(s,s_0,c_0)$, that is,
\begin{equation}
\min_{v \in \mathcal{C}_{WSGRE}(s,s_0,c_0)\setminus\{0\}}\frac{\|\Sigma^{\frac{1}{2}}v\|_2}{\|v\|_2} = \kappa \in (0,1),
\end{equation}
and the sparse group normalization condition, that is,
$$
\sup_{S\in \mathcal{S}}\|\Sigma_S\|_2\le 1,\ \text{for all}\ S \in \mathcal{S}.
$$
Let the generation of random design matrix $X = Z\Sigma^{\frac{1}{2}} \in  \mathbb{R}^{n\times p}$, if the sample size $n$ satisfies
$$
n > C_1\left(s\log \frac{4em}{s}+ss_0\log\frac{2ed}{s_0}\right),
$$
then with the probability not less than $1-\exp(-C_2n)$, for $\forall v \in \mathcal{C}_{WSGRE}(s,s_0,c_0)\setminus \{0\}$, we have
$$
\frac{\|Xv\|_2}{\|v\|_2} \ge \kappa',
$$
where $C_1,C_2>0$ is an absolute constant, $\kappa \in (0,\kappa)$.
\end{theorem}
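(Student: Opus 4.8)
The plan is to deduce the claim from Lemma \ref{mendelson1} applied to the cone $\mathcal{C}_{WSGRE}(s,s_0,c_0)$, the only substantive work being the estimation of the Gaussian complexity of that cone. Since both the cone and the ratio $\|Xv\|_n/\|v\|_2$ are invariant under $v\mapsto tv$, and since the population RE bound $\|\Sigma^{1/2}v\|_2\ge\kappa\|v\|_2$ forces $\|\Sigma^{1/2}v\|_2>0$ for every nonzero $v$ in the cone, it suffices to control $\|Xv\|_n$ uniformly over the slice
$$
\mathcal{V} \triangleq \mathcal{C}_{WSGRE}(s,s_0,c_0)\cap\{v:\|\Sigma^{1/2}v\|_2 = 1\}.
$$
For every $v\in\mathcal{V}$ we have $\Sigma^{1/2}v\in S^{p-1}$, so Lemma \ref{mendelson1} is directly applicable, and on its conclusion event $\|Xv\|_2/\sqrt{n}\ge(1-\theta)\|\Sigma^{1/2}v\|_2=1-\theta$; combined with $\|v\|_2\le\kappa^{-1}\|\Sigma^{1/2}v\|_2=\kappa^{-1}$ this yields $\|Xv\|_n/\|v\|_2\ge(1-\theta)\kappa$, i.e. $\kappa'=(1-\theta)\kappa\in(0,\kappa)$. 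It therefore remains only to verify the sample-size hypothesis of Lemma \ref{mendelson1}, namely that $\tilde\ell^*(\mathcal{V})^2$ is of order $s\log\frac{4em}{s}+ss_0\log\frac{2ed}{s_0}$.

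I would estimate $\tilde\ell^*(\mathcal{V})=\mathbf{E}\sup_{v\in\mathcal{V}}|\langle v,\tilde g\rangle|$, where $\tilde g\triangleq\Sigma^{1/2}g$ with $g\sim\mathcal{N}(0,\mathrm{I}_p)$, by duality. On the cone the defining inequality $\|v\|_*\le(2+c_0)\sqrt{\sum_{i=1}^{ss_0}\tilde\lambda_i^2}\,\|v\|_2$ together with $\|v\|_2\le\kappa^{-1}$ on $\mathcal{V}$ gives
$$
\tilde\ell^*(\mathcal{V})\le\frac{2+c_0}{\kappa}\Bigl(\sum_{i=1}^{ss_0}\tilde\lambda_i^2\Bigr)^{1/2}\,\mathbf{E}\|\tilde g\|_*^{\circ},
$$
where $\|\cdot\|_*^{\circ}$ denotes the dual of $\|\cdot\|_*$. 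Since $\tilde\lambda_i=\lambda_{\lceil i/s_0\rceil}$, a direct computation gives $\sum_{i=1}^{ss_0}\tilde\lambda_i^2=s_0\sum_{j=1}^s\lambda_j^2$, and inserting the definition of $\lambda_j$ together with the Stirling estimate $\sum_{j=1}^s\log\frac{4em}{j}\asymp s\log\frac{4em}{s}$ shows this is $\asymp ss_0\log\frac{2ed}{s_0}+s\log\frac{4em}{s}$, precisely the target order. Hence the whole argument reduces to the bound $\mathbf{E}\|\tilde g\|_*^{\circ}\le C$ for an absolute constant $C$.

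This last estimate is the heart of the matter, and where I expect the main difficulty. The point is that the normalization hypothesis $\sup_{S\in\mathcal{S}}\|\Sigma_S\|_{op}\le1$ makes $\tilde g$ play exactly the role that $\Phi=\frac{1}{\sqrt{n}}\xi^{T}X$ played in Section \ref{error}: for any $S\in\mathcal{S}$ the subvector $\tilde g_S\sim\mathcal{N}(0,\Sigma_S)$ is stochastically dominated in $\ell_2$-norm by a $\chi^2_{s_0}$ variable, which is exactly the consequence of condition \ref{sgnorm} exploited in Lemma \ref{phi}. Consequently the proofs of Lemma \ref{phi} and Lemma \ref{lem42} transfer verbatim (with $\sigma=1$) to $\tilde g$, so that on the analogue of the event $\Omega$ the group- and element-wise sorted statistics of $\tilde g$ are dominated by $4\sqrt{s_0}\lambda_j$ and $4\lambda_j$; the rearrangement-and-concatenation argument of Theorem \ref{gauss2} then gives $\langle v,\tilde g\rangle\le4\|v\|_*$ for all $v$, i.e. $\|\tilde g\|_*^{\circ}\le4$, with probability at least $\tfrac12$. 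To upgrade this median-type statement to a bound on the expectation I would not use a crude global Lipschitz constant (no bound on $\|\Sigma\|_{op}$ is assumed), but instead integrate the sub-exponential $\chi^2(s_0)$ tail bounds underlying Lemma \ref{phi}: the same moment-generating-function computation yields $\mathbf{P}(\|\tilde g\|_*^{\circ}>4t)$ decaying in $t$, whose integral gives $\mathbf{E}\|\tilde g\|_*^{\circ}=O(1)$ relying only on the block normalization $\|\Sigma_S\|_{op}\le1$.

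Combining the three estimates yields $\tilde\ell^*(\mathcal{V})^2\lesssim s\log\frac{4em}{s}+ss_0\log\frac{2ed}{s_0}$, so the assumption $n>C_1\bigl(s\log\frac{4em}{s}+ss_0\log\frac{2ed}{s_0}\bigr)$ guarantees the sample-size requirement of Lemma \ref{mendelson1} for a fixed $\theta\in(0,1)$ and the given $\psi_2$-constant $\alpha$. Lemma \ref{mendelson1} then delivers the one-sided bound on $\|Xv\|_2/\sqrt{n}$ uniformly over $\mathcal{V}$ on an event of probability at least $1-\exp(-\bar c\theta^2 n/\alpha^4)\ge1-\exp(-C_2 n)$, and the scale-invariance reduction of the first paragraph completes the proof with $\kappa'=(1-\theta)\kappa$. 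The analogous statement for $\mathcal{C}_{SSGRE}$ (strong sparse group RE) follows along identical lines, the only change being that the duality step uses $\|v\|_1+\sqrt{s_0}\|v\|_{1,2}$ in place of $\|v\|_*$.
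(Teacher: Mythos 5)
Your proposal follows essentially the same route as the paper: reduce by homogeneity to the slice $\mathcal{C}_{WSGRE}(s,s_0,c_0)\cap\{v:\|\Sigma^{1/2}v\|_2=1\}$, bound its Gaussian complexity by transferring Lemmas \ref{phi}, \ref{lem42} and Theorem \ref{gauss2} to $\Sigma^{1/2}g$ (which is legitimate, since the hypothesis $\sup_{S\in\mathcal{S}}\|\Sigma_S\|_{2}\le 1$ gives the same $\chi^2_{s_0}$ domination that condition \ref{sgnorm} gave for $\Phi$), and then feed the complexity bound $\asymp\bigl(ss_0\log\tfrac{2ed}{s_0}+s\log\tfrac{4em}{s}\bigr)^{1/2}$ into Lemma \ref{mendelson1} to get $\kappa'=(1-\theta)\kappa$. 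The one place you diverge is the median-to-expectation upgrade, and there your stated reason for the detour is a misconception. You claim the Lipschitz route is unavailable because ``no bound on $\|\Sigma\|_{op}$ is assumed''; but that objection only arises because you take the supremum over the \emph{entire} unit $\|\cdot\|_*$-ball when defining $\|\tilde g\|_*^{\circ}$. The paper instead takes the supremum over
$$
\tilde{V}_2=\Bigl\{v:\|v\|_*\le\tfrac{2+c_0}{\kappa}\bigl(\textstyle\sum_{i=1}^{ss_0}\tilde\lambda_i^2\bigr)^{1/2}\Bigr\}\cap\bigl\{v:\|\Sigma^{1/2}v\|_2\le 1\bigr\},
$$
which still contains the slice, so the median bound $4r$ applies by monotonicity and homogeneity, while the retained ellipsoid constraint makes $\xi\mapsto\sup_{v\in\tilde V_2}|\langle\xi,\Sigma^{1/2}v\rangle|$ automatically $1$-Lipschitz; Gaussian concentration (Lemma A.3 of \cite{bellec2018slope}) then gives $\mathbf{E}\le\mathrm{Med}+\sqrt{2/\pi}$ with no assumption on $\|\Sigma\|_{op}$ whatsoever. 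Your alternative, integrating the $\chi^2(s_0)$ tails to show $\mathbf{E}\|\tilde g\|_*^{\circ}=O(1)$, is plausible but not free: Lemmas \ref{phi} and \ref{lem42} are stated at a single threshold with constant failure probability, so you would have to rerun the Chernoff and peeling arguments with a scale parameter $t$ to obtain $\mathbf{P}(\|\tilde g\|_*^{\circ}>4t)$ decaying integrably, which is precisely the extra work your sketch defers. So the proposal is sound in outline, but its ``heart of the matter'' step is both harder than necessary and motivated by a misreading of what the Lipschitz argument requires.
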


\begin{corollary}[SSGRE for sub-Gaussian random design]\label{cor1}
If the covariance matrix satisfies the SSGRE condition, that is, 
$$
\min_{v \in \mathcal{C}_{SSGRE}(s,s_0,c_0)\setminus\{0\}}\frac{\|\Sigma^{\frac{1}{2}}v\|_2}{\|v\|_2} = \kappa \in (0,1).
$$
Assume all the conditions in Theorem \ref{randomwsgre} hold. If the sample size $n$ satisfies
$$
n > C_1\left(s\log \frac{4em}{s}+ss_0\log\frac{2ed}{s_0}\right),
$$
then with the probability greater than $1-\exp(-C_2n)$, for $\forall v \in \mathcal{C}_{SSGRE}(s,s_0,c_0)\setminus \{0\}$, we have
$$
\frac{\|Xv\|_2}{\|v\|_2} \ge \kappa',
$$
where $C_1,C_2>0$ are absolute constants and $\kappa \in (0,\kappa)$.
\end{corollary}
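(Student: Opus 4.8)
The plan is to read off Corollary \ref{cor1} from Theorem \ref{randomwsgre}, using the cone inclusion already recorded in Section \ref{sec:slope}. Recall that the computation there shows that every $\delta\in\mathcal{C}_{SSGRE}(s,s_0,c_0)$ satisfies $\|\delta\|_*\le(4+c_0)\sqrt{ss_0}\lambda_s\|\delta\|_2\le(4+c_0)\sqrt{\sum_{i=1}^{ss_0}\tilde\lambda_i^2}\|\delta\|_2$, so that
\[
\mathcal{C}_{SSGRE}(s,s_0,c_0)\ \subseteq\ \mathcal{C}_{WSGRE}(s,s_0,2+c_0).
\]
Thus the SSGRE cone is a sub-cone of a WSGRE cone whose defining constant differs only by an additive absolute amount. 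The whole argument of Theorem \ref{randomwsgre} depends on the cone only through two quantities, namely the population restricted eigenvalue and the Gaussian complexity $\tilde\ell^*$ fed into Lemma \ref{mendelson1}, and both are monotone under set inclusion; so I expect no new work beyond verifying that nothing degrades.

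Concretely, I would run the proof of Theorem \ref{randomwsgre} verbatim with $\mathcal{V}$ replaced by
\[
\mathcal{V}'\ \triangleq\ \left\{v\in\mathcal{C}_{SSGRE}(s,s_0,c_0):\ \Sigma^{\frac12}v\in S^{p-1}\right\}.
\]
By the inclusion above, $\mathcal{V}'\subseteq\{v\in\mathcal{C}_{WSGRE}(s,s_0,2+c_0):\Sigma^{1/2}v\in S^{p-1}\}$, and since Gaussian complexity is monotone, $\tilde\ell^*(\mathcal{V}')\le\tilde\ell^*\big(\mathcal{C}_{WSGRE}(s,s_0,2+c_0)\cap\{v:\Sigma^{1/2}v\in S^{p-1}\}\big)$. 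The latter was bounded in the proof of Theorem \ref{randomwsgre} by a constant multiple of $\big(s\log\frac{4em}{s}+ss_0\log\frac{2ed}{s_0}\big)^{1/2}$, the change $c_0\mapsto 2+c_0$ affecting only absolute constants. Feeding this complexity bound into Lemma \ref{mendelson1} therefore yields, under the same sample-size requirement $n>C_1\big(s\log\frac{4em}{s}+ss_0\log\frac{2ed}{s_0}\big)$ and with probability at least $1-\exp(-\bar c\theta^2 n/\alpha^4)$, the two-sided isometry $1-\theta\le\|Xv\|_2/\sqrt n\le 1+\theta$ for every $v\in\mathcal{V}'$.

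It remains to pass from $\mathcal{V}'$ to the whole cone. Given any $v\in\mathcal{C}_{SSGRE}(s,s_0,c_0)\setminus\{0\}$, I would rescale $v\mapsto v/\|\Sigma^{1/2}v\|_2$, which stays in the scale-invariant cone and lands in $\mathcal{V}'$, apply the lower isometry, and then multiply back by $\|\Sigma^{1/2}v\|_2$. Invoking the assumed population bound $\|\Sigma^{1/2}v\|_2\ge\kappa\|v\|_2$ on $\mathcal{C}_{SSGRE}(s,s_0,c_0)$ then gives $\|Xv\|_2\ge(1-\theta)\kappa\,\|v\|_2$, i.e.\ the claim with $\kappa'\triangleq(1-\theta)\kappa\in(0,\kappa)$; fixing $\theta$ as an absolute constant turns the probability into $1-\exp(-C_2 n)$. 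The only step requiring genuine effort is the Gaussian-complexity estimate, but that is precisely the content inherited from Theorem \ref{randomwsgre}, so the corollary adds nothing beyond the monotonicity observation and I do not anticipate a substantive obstacle.
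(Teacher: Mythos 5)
Your proposal is correct and follows essentially the same route as the paper: the paper's own proof of Corollary \ref{cor1} likewise invokes the cone inclusion $\mathcal{C}_{SSGRE}(s,s_0,c_0)\subseteq\mathcal{C}_{WSGRE}(s,s_0,2+c_0)$, uses monotonicity of the Gaussian complexity on the intersection with the $\Sigma^{1/2}$-unit sphere, and then reruns the proof of Theorem \ref{randomwsgre} unchanged. Your write-up merely makes explicit the rescaling step and the identification $\kappa'=(1-\theta)\kappa$ that the paper leaves implicit.
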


\begin{remark}
The proof of the SSGRE condition here can also be used to prove the DSRE condition. If the method of \cite{zhou2009restricted} is used to prove the DSRE condition, that is, by calculating the covering number of the set $\mathcal{DS}(s,s_0)$, and then by \cite{ledoux1991probability} To find the Gaussian complexity of the generic chaining technique, they use the condition:
$$
\rho_{\max} = \sup_{S \in \mathcal{DS}(s,s_0)}\|\Sigma_{S}\|_2 < \infty.
$$

This assumption is much more stringent than sparse group normalization condition we  use in Theorem \ref{randomwsgre}, because $\mathcal{DS}(s,s_0)$ is much larger than $\mathcal{S}$. This nuance is owing to the theoretical technique of \cite{bellec2018slope}. So that we achieve our goals to clarify all the results in this paper relying solely on  normalization and RE condition. We avoid using incoherence condition like \cite{cai2019sparse} and even RIP-type condition.
\end{remark}

\section{Discussion}\label{discussion}
In this paper, we study the high-dimensional double sparse regression, in which element-wise sparsity and group-wise sparsity exist simultaneously. From the perspective of convex algorithm, we study two kinds of $\ell_1+\ell_{1,2}$ algorithm: the universal tuning parameter for Lasso and weighted tuning parameter for Slope. We derive the estimation upper bounds for both and clarify that they are optimal by deriving  minimax lower bound for double sparsity. Compared with the previous study, we make significant improvements in various aspects:
\begin{itemize}
	\item \cite{cai2019sparse} obtained matching upper and lower bounds for sparse group Lasso. However, we make more efforts on design matrix condition and answer a pending question that how we tackle double sparsity by restricted eigenvalue condition. 
	\item We are inspired by the technique presented in \cite{bellec2018slope}, which establishes the optimality of Slope and opens the possibility of extending it to handle complicated sparsity patterns. While \cite{groupslope} has introduced group Slope based on \cite{bogdan2015slope}, it faces challenges when dealing with double sparsity using the same approach. To address this issue, we propose leveraging the powerful methodology from \cite{bellec2018slope} in conjunction with inequality techniques. This not only allows us to develop Slope for double sparsity but also enables us to derive the minimax optimality on the corresponding restricted eigenvalue condition.
	\item We study the random design property and optimal sample complexity under weak moment distribution and sub-Gaussian distribution. Weak distribution condition is proposed by \cite{lecue2017sparse} and we generalize their results for our sparse group setting. For sub-Gaussian random design, our result is more general than \cite{zhou2009restricted} because we get rid of RIP-type condition. It is owing to our Theorem \ref{gauss2} that we obtain the Gaussian complexity for sparse group Lasso and Slope.
\end{itemize}

Furthermore, there are several important problems that can be addressed in the future. Firstly, an effective algorithm for sparse group SLOPE can be developed, building upon the ideas proposed in our current work.
Secondly, the issue of adaption to the unknown sparsity $s_0$ still remains to be solved. One potential approach to achieve adaption to $s_0$ is through some minimax adaption procedure, which has been widely used and studied in the related field of statistics \cite{lepskii1991problem, lepski1997optimal, dalalyan2022all, aeckerle2022sup}.
Moreover, our approach presents a novel solution to address simultaneous sparse structures, making it a versatile and valuable technique applicable in various domains and applications. For instance, it can be applied to simultaneously sparse and low-rank structure recovery \cite{oymak2015simultaneously, hao2020sparse} as well as sparse tensor SVD \cite{zhang2019optimal}.

\newpage
\begin{appendix}
\section*{Technical Proofs}\label{appn} 
\subsection{Proof of Lemma \ref{phi}}
\begin{proof}
	Let $\Lambda \in [m]$ be the column index sets corresponding to $\Psi_{2}(s)$.
	We consider to upper bound the moment generating function of $\Psi_{2}(s)$.
	For simplicity, let $\sigma^2 = 1$. For $t>0$, we have
	\begin{align}
	\begin{split}\label{chi2}
	\mathbf{E}\left\{\exp\left[t\cdot \Psi_{2}(s)\right]\right\} &= \mathbf{E}\left\{\exp\left[\frac{t}{s s_0}\sum_{j=1}^s\sum_{i=1}^{s_0}(\Phi^*_{i,\Lambda_j})^2\right]\right\}\\
	& \le \frac{1}{s}\sum_{j=1}^s\mathbf{E}\left\{\exp\left[\frac{t}{s_0}\sum_{i=1}^ {s_0}(\Phi^*_{i,\Lambda_j})^2\right]\right\}\\
	&\le \frac{1}{s}\sum_{j=1}^m\mathbf{E}\left\{\exp\left[\frac{t}{s_0}\sum_{i=1}^ {s_0}(\Phi^*_{i,j})^2\right]\right\},
	\end{split}	
\end{align}
	where the first inequality follows from the AM-GM inequality.
	
	Next, we prove the upper bounds of $\mathbf{E}\left\{\exp\left(\frac{t}{s_0}\sum_{i=1}^{s_0}(\Phi^*_{i,j})^2\right )\right\}$.
	Take the $j$-th column as an example. 
	It is easy to observe that
	\begin{align*}
	\exp\left(\frac{t}{s_0}\sum_{i=1}^{s_0}(\Phi^*_{i,j})^2\right) &\leq \sum_{\begin{subarray}{l} S \subseteq [d] \\ |S| = s_0 \end{subarray}}\exp\left(\frac{t}{s_0}\sum_{i \in S}\Phi_{i,j}^2\right)\\
	&\leq \binom{d}{s_0}\exp\left(\frac{t}{s_0}\sum_{i \in S}\Phi_{i,j }^2\right).
	\end{align*}
	Taking the expectation on both sides, we can obtain
	\begin{equation}\label{chi3}
	\mathbf{E}\left\{\exp\left(\frac{t}{s_0}\sum_{i=1}^{s_0}(\Phi^*_{i,j})^2\right)\right\}\le \binom{d}{s_0}\mathbf{E}\left\{\exp\left(\frac{t}{s_0}\sum_{i=1}^{s_0}\Phi_{i,j }^2\right)\right\}.
	\end{equation}
	Let $Z\sim \chi^2_{s_0}$. Then the moment generating function of $Z$ is given by
	$$
	M_Z(t) = (1-2t)^{-\frac{s_0}{2}},\ t < \frac{1}{2}.
	$$
	Therefore, according to the condition \eqref{sgnorm}, we have
	\begin{equation}\label{chi4}
	\begin{aligned}
	\mathbf{E}\left\{\exp\left[\frac{t}{s_0}\sum_{i=1}^{s_0}\Phi_{i,j}^2\right]\right\} & \le \mathbf{E}\left\{\exp\left[\frac{t}{s_0}Z\right]\right\}\\
	&= (1-\frac{2t}{s_0})^{-\frac{s_0}{2}}.
	\end{aligned}
	\end{equation}	
    Combining \eqref{chi2}-\eqref{chi4}, according to Chernoff bound, we have
	\begin{align*}
	\mathbf{P}(\Psi_{2}(s)\ge a) &\leq \inf_{t\ge0}\frac{ \mathbf{E}\left\{\exp\left(t\cdot\Psi_{ 2}(s)\right)\right\}}{e^{at}}\\
	&\le \frac{m}{s}\binom{d}{s_0}2^{s_0}\exp(-\frac{3 }{8}as_0)\\
	&\le \frac{4m}{s}\exp\left\{-\frac{3}{8}s_0\left(a-\frac{8}{3}\log\frac{2ed}{s_0} \right)\right\},
	\end{align*}
    where the second inequality follows from $t = \frac{3}{8}s_0$.
	Let $a = \frac{8}{3}(\log\frac{2ed}{s_0}+ \frac{2}{s_0}\log \frac{4m}{s})$, we obtain
	\begin{equation*}
	\mathbf{P}\left[\sup_{S\in \mathbb{S}_2(s,s_0)}\frac{1}{ss_0\sigma^2}\|\Phi_S\|_2^2 \ge \frac{8}{3}\left(\log\frac{2ed}{s_0}+ \frac{2}{s_0}\log \frac{4m}{s}\right)\right] \le \frac{ s}{4m}.
	\end{equation*}
	
	Next, we turn to the analysis of the random variable $\Psi_{1}(s)$ on $\mathbb{S}_1(s,s_0)$. By the previous definition, $\Psi_{1}(s)\ge\Psi_{2}(s)$ holds almost surely. 
	However, we prove that the tail probability inequalities of the two variables only differ by a constant factor.
	
	Here we consider a set family $\mathcal{F}$ consisting of set $F \in \mathbb{S}_2(1, s_0)$.
	In particular, given any column, we select any $s_0$ elements of this column to form an index set $F$.
	The cardinality of the obtained set family is $\mathcal{F} = m \binom{d}{s_0}$.

	Based on columns, the index set  $S \in \mathbb{S}_2(s, s_0)$ can be separated into $s$ subsets $S^j,\ j \in [s]$. 
	Consequently, we can find $s$ elements of $\mathcal{F}$ such that $S^j \subseteq F^j, j \in [s]$ to cover $S$.
	However, it is not sufficient to find $s$ elements of $\mathcal{F}$ to cover any index set $S \in \mathbb{S}_1(s,s_0)$. 
    In what follows, we show that $2s$ sets of $\mathcal{F}$ are sufficient to cover any $S \in \mathbb{S}_1(s, s_0)$.
	For any $S \in \mathbb{S}_1(s, s_0)$, let $a_j$ be the number of elements in the $j$-th column of $S$, 
	, where we only need to consider $s$ non-zero columns. Observe that
	$$
	a_1+a_2+,\ldots,+a_s = ss_0.
	$$
	For $S^j,\ j \in [s]$, we can use at most $\lfloor\frac{a_i}{ s_0}\rfloor+1$ sets of $\mathcal{F}$ to cover $S^j$.
	Then according to the properties of the floor function, that is,  $\lfloor x\rfloor+\lfloor y\rfloor\le\lfloor x+y\rfloor$, we obtain
	$$
	\sum_{i=1}^s\left\{\lfloor\frac{a_i}{s_0}\rfloor+1\right\} \le s+\lfloor\frac{\sum_{i=1}^ sa_i}{s_0}\rfloor = 2s.
	$$
	Therefore, we can divide $S$ into $2s$ disjoint sets of set family $\mathcal{F}$. 
	Similar to \eqref{chi2}, we have
	\begin{align*}
	\mathbf{E}\left\{\exp\left(t\cdot \Psi_{1}(s)\right)\right\} &= \mathbf{E}\left\{\exp\left(\frac {1}{2s}\sum_{j=1}^{2s}\frac{2t}{s_0}\|\Phi_{S^j}\|^2_2\right)\right\}\\
	& \le \frac{1}{2s}\sum_{F \in \mathcal{F}}\mathbf{E}\left\{\exp\left(\frac{2t}{s_0} \|\Phi_{U}\|^2_2\right)\right\}.
	\end{align*}
	The rest of the proof is similar to the case of $\mathbb{S}_2(s,s_0)$. We obtain
	
	$$
	\mathbf{P}\left[\sup_{S\in \mathbb{S}_1(s,s_0)}\frac{1}{ss_0\sigma^2}\|\Phi_S\|_2^2 \ge \frac{16}{3}\left(\log\frac{2ed}{s_0}+ \frac{2}{s_0}\log \frac{4m}{s}\right)\right] \le \frac{ s}{4m}.
	$$
	Without loss of generality, we ignore the difference of constant factor, and rewrite the conclusion of $\mathbb{S}_2(s,s_0)$ as:
	$$
	\mathbf{P}\left[\sup_{S\in \mathbb{S}_2(s,s_0)}\frac{1}{ss_0\sigma^2}\|\Phi_S\|_2^2 \ge \frac{16}{3}\left(\log\frac{2ed}{s_0}+ \frac{2}{s_0}\log \frac{4m}{s}\right)\right] \le \frac{ s}{4m}.
	$$
	These complete the proofs of Lemma \ref{phi}.
	\end{proof}

\subsection{Proof of Lemma \ref{lem42}}

\begin{proof}
	Following the definitions, we can easily obtain that
	$$
	\Upsilon_s \le \sqrt{s_0\Psi_{2}(s)}\quad \text{and}\quad \upsilon_s \le \sqrt{\Psi_{1}(s)}.
	$$
	The following is proved using the peeling technique \citep{geer2000empirical}. With respect to $\Upsilon_{s}$, according to \eqref{s2}, for any given $j \in [m]$, we have
	\begin{equation}\label{lem421}
	\mathbf{P}\left(\Upsilon_j \le \frac{4}{\sqrt{3}}\sqrt{s_0}\lambda_j\right)\ge 1-\frac{s}{4m}, \qquad j \in [p].
	\end{equation}
	Let $q$ be the integer such that $2^q\le m\le 2^{q+1}$.
    Apply \eqref{lem421} to $j = 2^{l}$ for $l=0,1,2\cdots,q-1$ and define the event $\Omega_1$ defined as
	$$
	\Omega_1 = \left\{\max_{l=0,1,\cdots,q-1}\frac{\Upsilon_{2^l}\sqrt{3}}{4\sqrt{s_0}\lambda_{2^l}}\le 1\right\}.
	$$
	By the union bound, we have $P(\Omega_1) \ge 1- \frac{\sum_{l=0}^{q-1}2^l}{4m}\geq 1- \frac{2^q-1}{4m} \ge \frac{3}{4}$.
    For any $j < 2 ^q$, there exists $l \in \{0, 1, \ldots, q-1\}$ such that $2^l < j < 2^{l+1}$, and, thus, with the satisfaction of event $\Omega_1$,
	\begin{align*}
		\Upsilon_{j} \le \Upsilon_{2^l}
		&\le \frac{4\sigma}{\sqrt{3}}\sqrt{s_0\log\frac{2ed}{s_0}+ 2\log \frac{4m}{2^{l}}}\\
		&<4\sigma\sqrt{\frac{s_0}{3}\log\frac{2ed}{s_0}+ \frac{2}{3}\log \frac{8m}{j}}\\
		&<4\sigma\sqrt{s_0\log\frac{2ed}{s_0}+ 2\log \frac{2m}{j}}\\
		&< 4\sqrt{s_0}\lambda_j.
	\end{align*}
	On the other hand, for $2^{q}<j\le m\le 2^{q+1}$, there are
	\begin{align*}
		\Upsilon_{j} \le \Upsilon_{2^{q-1}}
		&\le \frac{4\sigma}{\sqrt{3}}\sqrt{s_0\log\frac{2ed}{s_0}+ 2\log \frac{4m}{2^{q-1}}}\\
		&<4\sigma\sqrt{\frac{s_0}{3}\log\frac{2ed}{s_0}+ \frac{2}{3}\log \frac{16m}{2^{q+1}}}\\
		&<4\sigma\sqrt{\frac{s_0}{3}\log\frac{2ed}{s_0}+ \frac{2}{3}\log \frac{16m}{j}}\\
		&<4\sigma\sqrt{s_0\log\frac{2ed}{s_0}+ 2\log \frac{4m}{j}}\\
		&= 4\sqrt{s_0}\lambda_j.
	\end{align*}
	Thus, when event $\Omega_1$ holds, we have
	$
	\max_{j=0,1,\cdots,m}\Upsilon_j\le 4\sqrt{s_0}\lambda_j
	$
	for all $j = 0, 1, \ldots, m$.
	
	Next, we turn to the proof of $\upsilon_s$.
	We first show the monotonicity of $\upsilon_s$, i.e., $\upsilon_s \ge \upsilon_{s+1}$ for all $s \in [m-1]$.
	By the definition of $\upsilon_{s+1}$, there exists $s+1$ columns such that the $s_0\times (s+1)$-th largest absolute value among these columns is $\upsilon_{s+1}$. 
	We denote the column that the $\upsilon_{s+1}$ belongs to as $g$.
	We separate our discussion into two cases:
	\begin{enumerate}
		\item If column $g$ contains fewer than $s_0$ elements that are larger than $\upsilon_{s+1}$, the remaining $s$ columns contain more than $ss_0$ elements that are larger than $\upsilon_{s+1}$. Therefore, among these $s$ columns, the $ss_0$-th largest element in absolute value is greater than $\upsilon_{s+1}$. Conversely, by definition, it is smaller than $\upsilon_s$. Consequently, we can conclude that $\upsilon_s \geq \upsilon_{s+1}$.		
		\item If column $g$ has at least $s_0$ elements larger than $\upsilon_{s+1}$, then among the remaining $s$ columns, there exists $s-1$ columns such that the combined $s$ columns (including column $g$) contain at least $ss_0$ elements greater than $\upsilon_{s+1}$. Therefore, within these $s$ columns, the $ss_0$th largest element in absolute value is greater than $\upsilon_{s+1}$ and, by definition, it is smaller than $\upsilon_s$. Consequently, we have $\upsilon_s \geq \upsilon_{s+1}$.		
	\end{enumerate}
	The rest of the work is just a matter of combining \eqref{s1} with the monotonicity of $\upsilon_s$. Similarly, we can obtain event
	$$
	\Omega_3 = \left\{\max_{l=0,1,\cdots,q-1}\frac{\upsilon_{2^l}\sqrt{3}}{4\lambda_{2^l}}\le 1\right\}
	$$
	satisfying $P(\Omega_3) \ge \frac{3}{4}$. By the union bound, we have 
	$$
	\mathbf{P}(\Omega_1\cap \Omega_3) \ge \frac{3}{4}+\frac{3}{4}-1 = \frac{1}{2}. 
	$$
    This completes the proof of Lemma \ref{lem42}.
\end{proof}

\subsection{Proof of Theorem \ref{gauss2}}
Before proving Theorem \ref{gauss2}, we provide the rearrangement inequality used frequently in the following proof.
In specific, the rearrangement inequality states that
for every choice of real numbers
$$
x_1 \leq \cdots \leq x_n \quad \text { and } \quad y_1 \leq \cdots \leq y_n
$$
and every permutation
$
x_{\sigma(1)}, \ldots, x_{\sigma(n)}
$
of $x_1, \ldots, x_n$,
it holds that
$$
x_n y_1+\cdots+x_1 y_n \leq x_{\sigma(1)} y_1+\cdots+x_{\sigma(n)} y_n \leq x_1 y_1+\cdots+x_n y_n.
$$

\begin{proof}
    We separate the proof of Theorem \ref{gauss2} into four steps.

	{\bf Step 1}:
	According to the definition of the $U,\Phi$ matrix in \ref{notation}, \eqref{gauss} can be expressed as
	$$
	\left|\frac{1}{n}\xi^TXu\right| = \left|\frac{1}{\sqrt{n}}\sum_{j=1}^m\sum_{i=1}^d U_{i,j}\Phi_{i,j}\right|.
	$$
	By the rearrangement inequality, we have
	$$
	\left|\frac{1}{\sqrt{n}}\sum_{j=1}^m\sum_{i=1}^d U_{i,j}\Phi_{i,j}\right| \le \frac{1}{\sqrt{n}}\sum_{j=1}^m\sum_{i=1}^d U^*_{i,j}\Phi^*_{i,j}.
	$$
	
	{\bf Step 2}:
	Divide $U^*,\Phi^*$ into two parts by rows, i.e., write
	$$
	\frac{1}{\sqrt{n}}\sum_{j=1}^m\sum_{i=1}^d U^*_{i,j}\Phi^*_{i,j} = \underbrace{\frac{1}{\sqrt{n}}\sum_{j=1}^m\sum_{i=1}^{s_0} U^*_{i,j }\Phi^*_{i,j}}_{A_1}+\underbrace{\frac{1}{\sqrt{n}}\sum_{j=1}^m\sum_{i=s_0}^{d} U^*_{i,j}\Phi^*_{i,j}}_{A_2}.
	$$
	
	For part $A_1$, we define the subvectors consisting of the first $s_0$ elements of each column of $U^*,\Phi^*$ as $U^{*}_{A_1,j},\Phi^{*}_{A_1,j}$, respectively. Then, by the Cauchy-Schwarz inequality, we have
	
	\begin{align*}\label{gauss21}
		\begin{split}
		A_1&\le \frac{1}{\sqrt{n}}\sum_{j=1}^m \|U^{*}_{A_1,j}\|_2\cdot \|\Phi^{*}_{A_1,j}\|_2\\\
		&\le \frac{1}{\sqrt{n}}\sum_{j=1}^m \|U^*_{j}\|_2\cdot \|\Phi^{*}_{A_1,j}\|_2\\
        &\le \frac{1}{\sqrt{n}}\sum_{j=1}^m \|U_{j*}\|_2\cdot \|\Phi^*_{A_1,j^*}\|_2,
			\end{split}
	\end{align*}
	where the last inequality follows the rearrangement inequality.
	Note that $\Phi^*_{A_1,j^*}$ is  $\Upsilon_j$ as defined in Lemma \ref{lem42}.
	Therefore, we have
	\begin{equation}\label{lemma42I}
		\begin{aligned}
			A_1 &\le \frac{1}{\sqrt{n}}\sum_{j=1}^m\|U_{j^*}\|_2\cdot\Upsilon_j\\
			&\le \frac{1}{\sqrt{n}}\sum_{j=1}^m\|U_{j^*}\|_2\cdot\sqrt{s_0}\lambda_j\frac{\Upsilon_j}{\sqrt{s_0}\lambda_j}\\\
			&\le \frac{1}{\sqrt{n}}\sum_{j=1}^m\|U_{j^*}\|_2\cdot\sqrt{s_0}\lambda_j\left(\max_{j\in [m]}\frac{\Upsilon_j}{\sqrt{s_ 0}\lambda_j}\right).
		\end{aligned}
	\end{equation}
	
	{\bf Step 3}.
	For part $A_2$, we first divide the index set of part $A_2$ into some disjoint subsets $S_1,S_2,\cdots$, where each subset is of size $s_0$ (here we assume that $d$ is larger than $2s_ 0$, otherwise we can fill in with zeros).
    Then, we use the random variable $\upsilon_s$ defined in Lemma \ref{lem42}.
	As a result, we have
	$$
	A_2 = \frac{1}{\sqrt{n}}\sum_{q}\sum_{(i,j)\in S_q}U^*_{i,j}\Phi^*_{i,j}.
	$$
	Without loss of generality, we assume that the absolute values of the $s+1$-th row of $\Phi^*$ is rearranged in descending order from left to right by columns. 
	Next, we specify the construction the sets $\{S_q\}$:
	\begin{enumerate}
		\item Select the largest $s_0$ elements from the first column of part $A_2$ to form set $S_1$;
		\item Consider the first $q$ ($q\geq2$) columns of part $A_2$, and select the largest $s_0$ elements from the portion excluding set $\cup_{i=1}^{q-1}{S_i}$ to form set $S_q$.
		\item Repeat step 2 $m$ times to obtain $m$ sets of size $s_0$. Allocate the remaining elements arbitrarily into sets of size $s_0$, and finally, obtain the family of sets $\{S_q\}$.
	\end{enumerate}
	
	We denote the maximum value of $\Phi^*$ within set $S_q$ by $\|\Phi^*_{S_q}\|_{\infty}$. Next, we prove that \textbf{for any given $q \in [m]$, there exists at least $s_0\times q$ elements in the first $q$ columns of $\Phi^*$ that are no less than $\|\Phi^*_{S_q}\|_{\infty}$}.

	\begin{remark}
		The above claim is combined with $\upsilon_q$ defined in Lemma \ref{lem42}. The definition of $\upsilon_q$ here still uses the whole random matrix $\Phi$ in Lemma \ref{lem42}, not the $A_2$ part. And $\Phi^*$ only permutes the order of the elements in the same column of $\Phi$, so it does not affect the definition of $\upsilon_q$. Therefore, if the claim is clarified, we have $\|\Phi^*_{S_q}\|_{\infty} \le \upsilon_q$ for $q\in [m]$ by the definition of $\upsilon_q$. For $q>m$,
we have $	\|\Phi^*_{S_m}\|_{\infty} \geq \|\Phi^*_{S_q}\|_{\infty}$ for any $q \geq m+1$  by the construction of $\{S_q\}$.
	\end{remark}

We prove the above claim  using mathematical induction:
	\begin{enumerate}
		\item When $q=1$, we have $\|\Phi^*_{S_1}\|_{\infty} = \Phi^*_{s_0+1,1}$. Then we have
		$$
		\Phi^*_{s_0+1,1} \le \Phi^*_{s_0,1},
		$$
		which implies that the first $s_0$ elements of $\Phi^*_1$ are no less than $\|\Phi^*_{S_1}\|_{\infty}$.
		Therefore, we can conclude that there are $s_0$ elements 
 no less than $\|\Phi^*_{S_q}\|_{\infty}$ in the first column.
		\item For $q < m$, we assume that the claim holds. 
		Note that there are at least $q\cdot s_0$ elements larger than $\|\Phi^*_{S_{q}}\|_{\infty}$ in the first $q$ columns by assumption. 
		For $q+1$, 
		we can analyze the location of $\|\Phi^*_{S_{q+1}}\|_{\infty}$ by the following two cases:
		\begin{enumerate}
			\item $\|\Phi^*_{S_{q+1}}\|_{\infty}$ appears in the first $q$ columns of part $A_2$. 
			By the construction of $S_{q+1}$, we have that all the elements of $\Phi^*_{S_q}$ are larger than $\|\Phi^*_{S_{q+1}}\|_{\infty}$.
			Since $|S_q|=s_0$ and $\|\Phi^*_{S_{q}}\|_{\infty}\geq \|\Phi^*_{S_{q+1}}\|_{\infty}$, we have that there are at least $q\cdot s_0+s_0$ elements are larger than $\|\Phi^*_{S_{q+1}}\|_{\infty}$ in the first $q$ columns.
			Therefore,  we have there are no less than $s_0\times(q+1)$ elements that larger than
			$
			\|\Phi^*_{S_{s+1}}\|_{\infty}
			$.
			
			\item $\|\Phi^*_{S_{q+1}}\|_{\infty}$ appears in the $(q+1)$-th column.
			In this case, we must have $\|\Phi^*_{S_{q+1}}\|_{\infty} = \Phi^*_{s_0+1,q+1}$.
			Observe that 
			$$
			\Phi^*_{s_0+1,q+1} \le \Phi^*_{s_0+1,q'} \le \Phi^*_{s_0,q'},\quad \forall q' \le q+1.
			$$
			Thus we get that there are at least $(q+1)\cdot s_0$ elements are larger than $\|\Phi^*_{S_{q+1}}\|_{\infty}$ in the first $q+1$ columns.
		\end{enumerate}
	\end{enumerate}
	Overall, we have proved that $\|\Phi^*_{S_q}\|_{\infty} \le \upsilon_q$ for $q \in [m]$. For $q> m$, we obviously have $\|\Phi^*_{S_q}\|_{\infty}\le \upsilon_m$. Arrange all elements of $u$ in the $A_2$ part in descending order of absolute value, and the $i$-th position is noted as $u^*_{A_2,i}$.
	\begin{equation}\label{lemma42II}
		\begin{aligned}
			A_2 & \le \frac{1}{\sqrt{n}}\sum_{q}\sum_{(i,j)\in S_q} U^*_{i,j} \upsilon_{q \wedge m}\\
			& \le \frac{1}{\sqrt{n}}\sum_{q}\sum_{(i,j)\in S_q} U^*_{i,j} \lambda_{q \wedge m} \left(\max_{j\in [m]}\frac{\upsilon_j}{\lambda _j}\right)\\
			& \le \frac{1}{\sqrt{n}}\sum_{i=1}^{m\times (d-s_0)}u^*_{A_2,i}\tilde{\lambda}_{i}\left(\max_{j\in [m]}\frac{\upsilon_j}{\lambda _j}\right)\\
			&\le \frac{1}{\sqrt{n}}\sum_{i=1}^{p}u_{i^*}\tilde{\lambda}_{i}\left(\max_{j\in [m]}\frac{\upsilon_j}{\lambda_j}\right),
		\end{aligned}
	\end{equation}
	where the third inequality follows from the rearrangement inequality.

	{\bf Step 4}:
	When event $\Omega$ defined by Lemma \ref{lem42} satifies that $\mathbf{P}(\Omega)\ge \frac{1}{2}$, \eqref{lemma42I} and \eqref{lemma42II} together show that
	\begin{equation}
		\begin{aligned}
			\sup_{u:N(u)\le 1}\left|\frac{1}{n}\xi^TXu\right| &\le A_1+A_2\\
			&\le\frac{1}{\sqrt{n}}\sum_{j=1}^m\|U_{j^*}\|_2\sqrt{s_0}\lambda_j\left(\max_{j\in[m]}\frac{\Upsilon_j}{\sqrt{s_0}\lambda_j}\right)\\\
			&+\frac{1}{\sqrt{n}}\sum_{i=1}^{p}u_{i^*}\tilde{\lambda}_{i}\left(\max_{j\in[m]}\frac{\upsilon_j}{\lambda_j}\right)\\
			&\le N(u)\cdot \left(\max_{j\in [m]}\frac{\Upsilon_j}{\sqrt{s_0}\lambda_j}\right)\vee\left(\max_{j\in[m]}\frac{\upsilon_ j}{\lambda_j}\right)=4.
		\end{aligned}
	\end{equation}
\end{proof}

\subsection{Proof of Theorem \ref{gauss3}}
The proof of Theorem \ref{gauss3} can be found in Proposition E.3 of \cite{bellec2018slope}, which is added here for the completeness of the article. The core idea of the proof of this theorem is the use of Levy-type inequalities for Lipschitz functions on sub-Gaussian random variables, as specified in \cite{boucheron2013concentration}, Theorem 10.17 or \cite{ledoux1991probability}, Eq. (1.4).
\begin{proof}
	By homogeneity, it is enough to consider the set
	$$
	T\triangleq \left\{\forall u\in \mathbb{R}^p:\max\left(N(u),\frac{1}{L}\|Xu\|_n\right)\le 1\right\},
	$$
	where $L\triangleq (\frac{n}{\sigma^2\log(1/\delta_0)})^{\frac{1}{2}}$.
	Define function $f : \mathbb{R}^n \rightarrow \mathbb{R}$ by
	$$
	f(t) \triangleq \sup_{u\in T}\frac{1}{n}(\sigma t)^{\top}Xu\qquad \text{for all}\ t \in \mathbb{R}^n.
	$$
	It is obvious that $f(t)$ is a Lipschitz function with Lipschitz constant $\frac{\sigma L}{\sqrt{n}}$.
	Thus by \cite{boucheron2013concentration}, with probability at least $1-\delta_0/2$, we have
	$$
	\begin{aligned}
		\sup_{u\in T}\frac{1}{n}\xi^{\top}Xu&\le \text{Med}\left[\sup_{u\in T}\frac{1}{n}\xi^{\top}Xu\right] + \sigma L\sqrt{\frac{2\log (1/\delta_0)}{n }}\\\
		& \le \text{Med}\left[\sup_{u:N(u)\le 1}\frac{1}{n}\xi^{\top}Xu\right] + \sigma L\sqrt{\frac{2\log (1/\delta_0)}{n}}\\\
		& \le 4 + \sigma L\sqrt{\frac{2\log (1/\delta_0)}{n}} = 4+\sqrt{2},
	\end{aligned}
	$$
	where the last inequality uses $P(\Omega_2) \geq \frac{1}{2}$to bound the median.
\end{proof}

\section{Proof of upper bounds for two estimators}
\subsection{Proof of Theorem \ref{sglassomain}}
\begin{proof}
	According to the convex optimization problem \eqref{sglasso} and Lemma \ref{lemma:sglasso}, we have
	$$
	\|X(\hat\beta-\beta^*)\|_n^2 \le \underbrace{\frac{1}{n}\xi^TX(\hat\beta-\beta^*)}_{B_1}+ \underbrace{\frac{1}{2}(h(\beta)-h(\hat{\beta^*}) )}_{B_2}.
	$$ 
	For part $B_1$, recall the definition of $N(u)$ in \eqref{Nu}.
	By the Cauchy-Schwarz inequality, we have
	\begin{align}
		\begin{split}\label{lassomain2}
		\sqrt{n}N(u)\le &\left(\sum_{i=1}^{ss_0}\tilde{\lambda}_i^2\right)^{\frac{1}{2}}\|u\|_2+\sum_{i=ss_0+1}^p\tilde{\lambda}_iu_{i^*}+\left(\sum_{j=1}^{s}\lambda_j^2\right)^{\frac{1}{2}}\sqrt{s_0}\|u\|_2+\\
		&\sum_{j=s+1}^m\sqrt{s_0}\lambda_j\|U_{j^*}\|_2 .
		\end{split}
	\end{align}
	Observe that
	\begin{align}
		\begin{split}\label{Lambda}
		\left(\sum_{i=1}^{ss_0}\tilde{\lambda}_i^2\right)^{\frac{1}{2}} = \sqrt{s_0}\left(\sum_{j=1}^{s}\lambda_j^2\right)^{\frac{1}{2}} =&\sqrt{s_0}\sigma\left(\sum_{j=1}^s\log\frac{2ed}{s_0}+ \frac{2}{s_0}\log \frac{4em}{j}\right)^{\frac{1}{2}}\\
        \leq&C \frac{\gamma\sqrt{nss_0}}{4+\sqrt{2}}\lambda^{\#},
        \end{split}
	\end{align}
	where constant $C \in (1, 2)$. Since the constant term can be ignored, we set $C$ to 1 for convenience.
	On the other hand, $\tilde{\lambda}_{ss_0+1} = \lambda_{s+1}\le \frac{\gamma\sqrt{n}}{4+\sqrt{2}}\lambda^{\#}$.
	Combining this inequality, \eqref{lassomain2} and \eqref{Lambda}, we have
	\begin{align*}
	(4+\sqrt{2})N(u)&\le \gamma \lambda^{\#}\left\{2\sqrt{ss_0}\|u\|_2 + \left(\sum_{i=ss_0+1}^pu_{i^*}+\sum_{j=s+1}^m\sqrt{s_0}\|U_{j^*}\|_2\right)\right\}\\
	&\triangleq F(u).
	\end{align*}	
	By the definition of $\delta(\lambda)$ and $\lambda^{\#}$, we have
	$$
	G(u) = (4+\sqrt{2})\|Xu\|_n\sigma\sqrt{\frac{\log \frac{1}{\delta_0}}{n}} = \lambda^{\#}\gamma\sqrt{ss_0}\left(\frac{\log(1/\delta_0)}{\log( 1/\delta(\lambda^{\#}))ss_0}\right)^{\frac{1}{2}}\|Xu\|_n.
	$$
	From Theorem \ref{gauss3}, with probability at least $1-\frac{\delta_0}{2}$, we have 
	\begin{align}
		\begin{split}\label{lassomain3}
	B_1 &\leq (4+\sqrt{2})\max\left(N(u),\|Xu\|_n\sigma\sqrt{\frac{\log \frac{1}{\delta_0}}{n}}\right)\\
	&\leq \max (F(u), G(u)).
		\end{split}
	\end{align}
	Next, for part $B_2$, it is worth noting that sequences with all entries equal to $\lambda^{\#}$ constitute a special case of non-increasing sequences $\lambda$ and $\tilde{\lambda}$.	
	By Lemma \ref{lemma:sglasso2}, we have
	\begin{align}
		\begin{split}\label{lassomain1}
		B_2&\le \lambda^{\#}\left\{(\|\beta^*\|_1 - \|\hat\beta\|_1) + \sqrt{s_0}(\|\beta^*\|_{1,2}-\|\hat\beta\|_{1,2})\right\}\\
		&\le \lambda^{\#}\left\{\left(\sqrt{ss_0}\|u\|_2-\sum_{i=ss_0+1}^pu_{i^*}\right)+\left(\sqrt{ss_0}\|u\|_2-\sum_{j=s+1}^m\sqrt{s_0}\|U_{j^*}\|_2\right)\right\}\\
		&\le \lambda^{\#}\left\{2\sqrt{ss_0}\|u\|_2 - \left(\sum_{i=ss_0+1}^pu_{i^*}+\sum_{j=s+1}^m\sqrt{s_0}\|U_{j^*}\|_2\right)\right\} .
		\end{split}
	\end{align}
	 Then, combining parts \eqref{lassomain3} and \eqref{lassomain1}, we have
	\begin{align}
		\begin{split}\label{thm1}
		\|Xu\|_n^2 \le& \max\left(F(u),G(u)\right)+\\
		&\lambda^{\#}\left[2\sqrt{ss_0}\|u\|_2 - \left(\sum_{i=ss_0+1}^pu_{i^*}+\sum_{j=s+1}^m\sqrt{s_0}\|U_{j^*}\|_2\right)\right].
		\end{split}
	\end{align}
	The following proof is discussed in two cases:
	\begin{enumerate}
		\item $G(u) \ge F(u)$. In this case, we have
		\begin{equation}\label{lassomain4}
			\|u\|_2 \le \frac{1}{2}\left(\frac{\log(1/\delta_0)}{\log(1/\delta(\lambda^{\#}))ss_0}\right)^{\frac{1}{2}}\|Xu\|_n.
		\end{equation}
		Substituting \eqref{lassomain4} into \eqref{thm1}, we have
		\begin{equation}\label{lassomain5}
		\|Xu\|_n^2 \le (1+\gamma)\lambda^{\#}\sqrt{ss_0}\left(\frac{\log(1/\delta_0)}{\log(1/\delta(\lambda^{\#}))ss_0}\right)^{\frac{1}{2}} \|Xu\|_n.
		\end{equation}
		Combining \eqref{lassomain4} and \eqref{lassomain5}, we have
		\begin{equation*}
			\|u\|_2\le\frac{1}{2}(1+\gamma) \left(\frac{\log(1/\delta_0)}{\log(1/\delta(\lambda^{\#}))ss_0}\right)\sqrt{ss_0}\lambda^{\#}.
		\end{equation*}
		\item $F(u) > G(u)$. Under this assumption, \eqref{thm1} shows that
		\begin{equation}\label{lassomain6}
		\|Xu\|_n^2 \le \lambda^{\#}\left[2(1+\gamma)\sqrt{ss_0}\|u\|_2 - (1-\gamma)\left(\sum_{i=ss_0+1}^pu_{i^*}+\sum_{j=s+1}^m\sqrt{s_0}\|U_ {j^*}\|_2\right)\right] .
		\end{equation}
		Therefore, we have
		$$
		\frac{2(1+\gamma)}{1-\gamma}\sqrt{ss_0}\|u\|_2 > \sum_{i=ss_0+1}^pu_{i^*}+\sum_{j=s+1}^m\sqrt{s_0}\|U_ {j^*}\|_2,
		$$
		which implies that $u$ belongs to cone $\mathcal{C}_{SSGRE}(s, s_0, \frac{4\gamma}{1-\gamma})$.
		Then, according to Assumption \ref{ssgre} with parameter $\theta(s, s_0, \frac{4\gamma}{1-\gamma})$, we have
		$$
		\|u\|_2^2 \le \frac{\|Xu\|_n^2}{\theta(s,s_0,\frac{4\gamma}{1-\gamma})^2}.
		$$
		Substituting \eqref{lassomain6} into above inequality, we have
		$$
		\|u\|_2 \le \frac{2(1+\gamma)\sqrt{ss_0}\lambda^{\#}}{\theta(s,s_0,\frac{4\gamma}{1-\gamma})^2}.
		$$
	\end{enumerate}
	Thus combining these two cases, we conclude that
	$$
	\|\hat{\beta}-\beta^*\|_2 \le (1+\gamma)\sqrt{ss_0}\lambda^{\#}\max\left\{\frac {1}{2}\left(\frac{\log(1/\delta_0)}{\log(1/\delta(\lambda^{\#}))ss_0}\right), \frac{2}{\theta(s,s_0,\frac{4\gamma}{1-\gamma})^2}\right\}.
	$$
\end{proof}

\subsection{Proof of Theorem \ref{sgSlopemain}}
\begin{proof}
	Define $\Lambda(s)$ as
	$$
	\Lambda(s) = \frac{(4+\sqrt{2})}{\sqrt{n}\gamma}\left(\sum_{i=1}^{ss_0}\tilde{\lambda}_i^2\right)^{\frac{1}{2}} = \frac{(4+\sqrt{2})}{ \sqrt{n}\gamma}\sqrt{s_0}\left(\sum_{j=1}^{s}\lambda_j^2\right)^{\frac{1}{2}}.
	$$
	By the definition of $h(\beta)$ and Lemma \ref{lemma:sglasso2}, we have
	\begin{align}
	\begin{split}\label{slope1}
	\frac{1}{2}\left(h(\beta^*)-h(\hat{\beta}) \right)&=\frac{4+\sqrt{2}}{ \sqrt{n}\gamma}\left\{\left(\|\beta^*\|_{\tilde \lambda^*}-\|\hat\beta\|_{\tilde \lambda^*}\right)+\left(\|\beta^*\|_{G, \lambda^*}-\|\hat\beta\|_{G, \lambda^*}\right)\right\}\\
	&\le 2\Lambda(s)\|u\|_2-\frac{4+\sqrt{2}}{\sqrt{n}\gamma}\left(\sum_{i=ss_0+1}^p\tilde{\lambda}_iu_{i^*}+\sqrt{s_0}\sum_{j=s+1}^m\lambda_j\|U_{j^*}\|_2\right).
	\end{split}	
    \end{align}
	Define $K(u) = \frac{4+\sqrt{2}}{\sqrt{n}\gamma}\left(\sum_{i=ss_0+1}^p\tilde{\lambda}_iu_{i^*}+\sqrt{s_0}\sum_{j=s+1} ^m\lambda_j\|U_{j^*}\|_2\right)$, then we have
	$$
	\frac{1}{2}\left(h(\beta^*)-h(\hat{\beta}) \right)\le 2\Lambda(s)\|u\|_2-K(u).
	$$
	According to  \eqref{Nu}, we have
	$$
	(4+\sqrt{2})N(u) \le \gamma\left\{2\Lambda(s)\|u\|_2+ K(u)\right\}.
	$$
	Define $H(u) = \gamma\left\{2\Lambda(s)\|u\|_2+ K(u)\right\}$. By Lemma \ref{lemma:sglasso}, similar to \eqref{thm1}, we get
	\begin{equation}\label{thm2}
		\|Xu\|_n^2 \le \max\{H(u),G(u)\} + 2\Lambda(s)\|u\|_2- K(u),
	\end{equation}
	where $G(u)$ is defined in the proof of Theorem \ref{sglassomain}.
	Similarly, we discuss the same in two cases:
	\begin{enumerate}
		\item $H(u) \le G(u)$. This case implies that
		$$
		2\Lambda(s)\|u\|_2\le\lambda^{\#}\left(\frac{\log(1/\delta_0)}{\log(1/\delta(\lambda^{\#}))}\right)^{\frac{1}{2}}\|Xu\|_n.
		$$
		By the definition of $\lambda^{\#}$, we have
		\begin{equation}\label{Slope3}
			\|u\|_2\le \frac{1}{2}\left(\frac{\log(1/\delta_0)}{\log(1/\delta(\lambda^{\#}))ss_0}\right)^{\frac{1}{2}}\|Xu\|_n.
		\end{equation}
		Substituting above inequality into \eqref{thm2}, we have
		$$
		\|Xu\|_n^2 \le (1+\gamma)\lambda^{\#}\sqrt{ss_0}\left(\frac{\log(1/\delta_0)}{\log(1/\delta(\lambda^{\#}))ss_0}\right)^{\frac {1}{2}}\|Xu\|_n.
		$$
		Combining these two inequalities we have
		\begin{equation*}
			\|u\|_2\le\frac{1+\gamma}{2} \left(\frac{\log(1/\delta_0)}{\log(1/\delta(\lambda^{\#}))ss_0}\right)\sqrt{ss_0}\lambda^{\#}.
		\end{equation*}
		\item $H(u) > G(u)$. In this case, \eqref{thm2} shows that
		\begin{equation}\label{slope2}
			\|Xu\|_n^2 \le 2(1+\gamma)\Lambda(s)\|u\|_2 - (1-\gamma)K(u),
		\end{equation}
		which implies that
		$$
		K(u) < \frac{2(1+\gamma)}{1-\gamma}\Lambda(s)\|u\|_2.
		$$
        Consequently,
		$$
		\begin{aligned}
			\|u\|_{\tilde{\lambda}*}+\sqrt{s_0}\|u\|_{G,\lambda*}&= \sum_{j=1}^s\|U_{j^*}\|\sqrt{s_0}\lambda_j+\sum_{j=1}^{ss_0}u_{j^*}\tilde{\lambda}_j +K(u)\\
			&\le 2\Lambda(s)\|u\|_2 + K(u)\\\
			&\le \left(2+\frac{2(1+\gamma)}{1-\gamma}\right)\Lambda(s)\|u\|_2.
		\end{aligned}
		$$
		It shows that $u$ belongs to cone $\mathcal{C}_{WSGRE}(s, s_0, \frac{2(1+\gamma)}{1-\gamma})$.
		Then, according to Assumption \ref{wsgre}, we have
		$$
		\|u\|_2^2 \le \frac{\|Xu\|_n^2}{\theta(s,s_0,\frac{2(1+\gamma)}{1-\gamma})^2}.
		$$
		Substituting \eqref{slope2} into above inequality and combining \eqref{Lambda}, we have
		$$
		\|u\|_2 \le \frac{2(1+\gamma)\Lambda(s)}{\theta(s,s_0,\frac{2(1+\gamma)}{1-\gamma})^2}\le \frac{2(1+\gamma)\sqrt{ss_0}\lambda^{\#}}{\theta(s,s_0,\frac{2(1+\gamma)}{1-\gamma})^2}.
		$$
	\end{enumerate}
	Combining these two cases, we complete the proof of Theorem \ref{sgSlopemain}.
\end{proof}

\section{Proof of minimax lower bound}

\subsection{Proof of Lemma \ref{lem3}}
\begin{proof}
	We only change the first step in \ref{lem2}, i.e., the group index set. With a little abuse of notations, we denote $\theta_{g_j} \in \mathbb{R}^p$ by only preserving the entries of group j, and setting others to 0.  In other words, $\theta = \sum_{j=1}^{m}\theta_{g_j}$. 
	
	Rethinking Lemma \ref{lem2}, let $\mathcal{M}(s)$ represent the $s$-group sparse packing set by Lemma \ref{lem2}, and $\mathcal{G}(s)$ the group index set of  $\mathcal{M}(s)$. Therefore $\forall \theta \in \mathcal{M}(s),|\{j:\theta_{g_j}\ne 0 \}| = s$; for $\forall j\in [m]$, we have $|\theta_{g_j}|_0 = s_0$. Moreover 
	$\|\theta_i,\theta_k\|_H \ge \frac{ss_0}{4}, \forall i\ne k, \theta_i,\theta_k \in \mathcal{M}(s)$. Reviewing the construction of encoding procedure, all entries of $\theta$ are 1 or 0. 
	
	
	Denote $G(\theta) = \{j:\theta_{g_j}\ne  0\}\subseteq [m]$. We prove the following result: given any $s \in [\frac{m}{2}]$, and for $\forall G \in \mathcal{G}(s)$, there exists a vector $\theta$ with the following properties:
	\begin{itemize}
		\item $G(\theta) = G$, and $\|\theta_{g_j}\|_0= s_0,\forall  G(\theta)_j = 1$;
		\item all the absolute of non-zero entries is $\frac{1}{\sqrt{ss_0}}$;
		\item $\|X\theta\|_{n} \le \vartheta_{\max}$.
	\end{itemize} 
	
	The first property holds by Lemma \ref{lem2} and the sencond is self evident. we prove third by induction. For $ s = 1$ the property is trival and we assume it is right for $s-1$, and for $s$, we give a $G \in \mathcal{G}(s)$, and $G' \subseteq G, |G'| = s-1$. So that there exists a vector $\theta$ with non-zero entries $\frac{1}{\sqrt{ss_0}}$ whose group index $G(\theta)  = G'$. We have
	$$
	\|X\theta\|_n^2 \le \vartheta_{\max}^2\frac{s-1}{s}.
	$$
	
	Now consider two vectors $\theta_1,\theta_2$ that all non-zero entries are $\frac{1}{\sqrt{ss_0}}$, subject to
	$$
	\left\{\begin{array}{ll}
		(\theta_1)_{g_j} = (\theta_2)_{g_j} = \theta_{g_j} &  j \in  G'\\
		(\theta_1)_{g_j} = -(\theta_2)_{g_j}, & j \in G \setminus G'\\
		(\theta_1)_{g_j} = (\theta_2)_{g_j} = 0, & \text{otherwise}
	\end{array}\right.
	$$
	Therefore, we have:
	$$
	\|X\theta_1\|_n^2 + \|X\theta_2\|_n^2 = 2\vartheta_{\max}^2.
	$$
	
	which means there alway a vector, $\theta_1$ or $\theta_2$ that preserve $\|X\theta\|_n \le \vartheta_{\max}$. This procedure inducates that for any group index vector $G$, there is a signed-version $\tilde{G} \in B_0(s)\cap \{-1,1,0\}^m$ making the above properties hold. So we continue the following steps the same as Lemma \ref{lem2}. Because the sign makes the hamming distance not decrease, we finish the proof.
\end{proof}

\subsection{Proof of  Theorem \ref{thm:lower}}
\begin{proof}
Consider the $\frac{ss_0}{4}$-packing set $\{\theta^1, \ldots, \theta^{M}\}$ we derive in \ref{lem3}.
Let $\beta^{(i)} = \delta\theta^i$, where $\delta$ is a parameter that need to be determined below.
For any $\beta^{(i)} \neq \beta^{(j)}$, since $\{\theta^1, \ldots, \theta^{M}\}$ is a $\frac{ss_0}{4}$-packing set of $\widetilde\Theta^{m,d}(s, s_0)$, we have 
\begin{equation}\label{ap3}
	\|\beta^{(i)}- \beta^{(j)}\|^2 \geq \frac{1}{4}ss_0\delta^2,~ \forall i,j \in [M].
\end{equation}
 On the other hand, given design matrix $X$, we have: 
 \begin{equation}\label{ap16}
 	\|X\beta^{(i)}\|_n^2 \leq \vartheta_{\max}^2 ss_0\delta^2,~ \forall i \in [M].
 \end{equation}

 Denote $y^{(i)} = X\beta^{(i)} + \xi_i, \forall i \in [M]$. We consider the Kullback-Leibler divergence
between different distribution pairs as
$$
\begin{aligned}
	KL\left(y^{(i)}|| y^{(j)}\right) &= \frac{1}{2\sigma^2}\|X(\beta^{(i)}-\beta^{(j)})\|_2^2\\
	&\le \frac{n}{\sigma^2}\left(\|X\beta^{(i)}\|_n^2 + \|X\beta^{(j)}\|_n^2 \right)\\
	&\le \frac{2n ss_0\vartheta_{\max}^2\delta^2}{\sigma^2},
\end{aligned}
$$
where the last inequality uses \eqref{ap16}. Denote $B$ as the random vector uniformly distributed over the packing set. 
Observe that
\begin{align}\label{ap6}
	\begin{split}
		I(y;B) \leq& \frac{1}{\binom{M}{2}} \sum_{i < j} KL(y^{(i)} || y^{(j)})\\
		\leq & \frac{1}{\binom{M}{2}} \sum_{i < j} \frac{1}{2\sigma^2}\|X(\beta^{(i)}-\beta^{(j)})\|_2^2\\
		\leq&\frac{2n ss_0\vartheta_{\max}^2\delta^2}{\sigma^2},
	\end{split}
\end{align}

Combining the generalized Fano's Lemma and \eqref{ap6}, we have
\begin{align*}
	\mathbf{P}(B \neq \widetilde\beta)
	&\geq 1 - \frac{I(y;B)+\log 2}{\log M}\\
	&\geq 1 - \frac{\frac{2n\vartheta_{\max}^2}{\sigma^2}ss_0\delta^2+\log 2}{\log M},
\end{align*}
where $\widetilde\beta$ takes value in the packing set.
To guarantee $P(B \neq \widetilde \beta) \geq \frac{1}{2}$, it suffices to choose
$\delta = \frac{1}{2}\sqrt{\frac{\sigma^2}{2n\vartheta_{\max}^2ss_0}\log M}$.
Substituting it into equation \eqref{ap3} and from Lemma \ref{lem2}, we have
\begin{align*}
	&\inf_{\hat{\beta}}\sup_{\beta^*\in {\Theta}^{m,d}(s, s_0) }\mathbf{P}\left(\|\hat\beta-\beta^*\|_2^2 \geq \frac{\sigma^2}{128n\vartheta_{\max}^2}( s \log \frac{em}{s}+ss_0\log \frac{ed}{s_0})\right)\\
	\geq&\inf_{\hat{\beta}}\sup_{\beta^*\in \widetilde{\Theta}^{m,d}(s, s_0) }\mathbf{P}\left(\|\hat\beta-\beta^*\|_2^2 \geq \frac{\sigma^2}{128n\vartheta_{\max}^2}( s \log \frac{em}{s}+ss_0\log \frac{ed}{s_0})\right)\\
	\geq&\mathbf{P}(B \neq \widetilde \beta)\\
	\geq&\frac{1}{2},
\end{align*}
which completes the proof of \ref{thm:lower}.
\end{proof}
\section{proof of weak random design}

\subsection{Proof of Theorem \ref{thm:ssgre}}
\begin{proof}
	Given a vector $\beta \in \mathbb{R}^{d\times m}$, it can be arranged into a group matrix of $d\times m$. where the $j$-th group is denoted as $\beta_{g_j} \in \mathbf{R}^{d}$. This proof uses $\beta_{g_j}$ to denote the parameters of the $j$-th group, and $G$ denotes the random variables to represent group index. We divide the random variable construction into four steps as follows:
	
	{\bf step 1}
	
	Construct the group indicator variable $G$:
	
	Define
	$$
	w_j = \sqrt{s_0}\|\beta_{g_j}\|_2 + \|\beta_{g_j}\|_1,\quad\forall j=1,2,\cdots,m.
	$$
	Then $W = \sum_{j=1}^m w_j$. We define $G$ to obey
	$$
	\mathbf{P}\left(G = j\right) = \frac{w_j}{W},\quad \forall j\in [m].
	$$
	Now let $G_1,G_2\cdots G_s$ be $s$ random vectors that are identically distributed with $G$ and independent of each other.

	{\bf step 2}
	
	Now given $G_k =j$. Define the conditional random variable $Y_G$:
	$$
	\mathbf{P}\left(Y_G = \left(\frac{W}{w_j}\right)\cdot\|\beta_{g_j}\|_1e_{ij}\bigg|G = j\right) = \frac{|\beta_{ij}|}{\|\beta_{g_j}\|_1},\quad \forall i\in [d],
	$$
	where the indicator variable $e_{ij}\in \{0,1\}^p$ means that $1$ is taken only at the $i$-th variable of the $j$-th group and $0$ at other positions.

	Similarly assume that $Y_{G1},Y_{G2},\cdots,Y_{Gs_0}$ are $s_0$ random vectors that are identically distributed with $Y_G$ and independent of each other. Then let $H = \frac{1}{s_0}\sum_{l=1}^{s_0}Y_{Gl}$, it is clear that $H$ has elements only on the $i$-th group and is a $s_0$-sparse vector.
	
	For convenience given $G = j$, we also define $\tilde{Y}_G = \frac{w_j}{W}Y_G$, i.e.
	$$
	\mathbf{P}\left(\tilde{Y}_G = \|\beta_{g_j}\|_1e_{ij}\bigg|G = j\right) = \frac{|\beta_{ij}|}{\|\beta_{g_j}\|_1}\quad \forall i\in [d].
	$$
	Then we have:
	$$
	\mathbf{E}\left[\tilde{Y}_G\bigg|G = j\right] = \beta_{g_j}.
	$$
	and denote $\tilde{H} = \frac{1}{s_0}\sum_{l=1}^{s_0}\tilde{Y}_{Gl}$.
	\begin{equation}\label{H}
		\begin{aligned}
			\mathbf{E}\left[\|\tilde{H}\|^2\big|G=j\right] &= \frac{1}{s_0^2}\sum_{l\ne m}\langle \mathbf{E}[\tilde{Y}_{Gl}\big|G=j],\mathbf{E}[\tilde{Y}_{Gm}\big|G =j]\rangle + \frac{1}{s_0^2}\sum_{l=1}^{s_0} \mathbf{E}[\|\tilde{Y}_{Gl}\|\big|G=j]\\\
			& = (1-\frac{1}{s_0})\|\beta_{g_j}\|_2^2 + \frac{1}{s_0}\sum_{j=1}^d\frac{|\beta_{ij}|}{\|\beta_{g_j}\|_1}\|\beta_{g_j}\|_1^2\\
			&= (1-\frac{1}{s_0})\|\beta_{g_j}\|_2^2 + \frac{1}{s_0}\|\beta_{g_j}\|_1^2.
		\end{aligned}
	\end{equation}
	
	Then for each group of schematic variables given by {\bf step 1}, we are able to generate the corresponding mutually independent conditional distributions $\mathbf{P}(H_k|G_k)$ respectively. In this way we obtain $s$ mutually independent random variables $\{H_k\}$.
	
	At this point we define:
	$$
	Z = \frac{1}{s}\sum_{k=1}^{s}H_k.
	$$

	Then at this point $ Z$ belongs to the set of doubly sparse vectors $ \mathcal{DS}(s,s_0)$. and we have $H_{k_1}$ and $H_{k_2}$ independent of each other at $k_1\ne k_2$, and:
	$$
	\begin{aligned}
		\mathbf{E}[H] &= \sum_{j=1}^{m}\frac{w_i}{W}\mathbf{E}[H|G = j]\\
		&= \sum_{j=1}^{m}\frac{w_i}{W}\mathbf{E}[H|G = j] = \sum_{j=1}^{m}\frac{w_i}{W}\mathbf{E}[Y_G|G = j]\\
		& = \sum_{j=1}^{m}\frac{w_i}{W}\sum_{j=1}^{d}\left(\frac{W}{w_i}\right)\cdot\|\beta_{g_j}\|_1e_{ij}\cdot \frac{|\beta_{ij}|}{\|\beta_{g_j j}\|_1} = \beta.
	\end{aligned}
	$$
	
	{\bf step 3}
	
	Thus under the assumptions of the DSRE condition there are:
	$$
	\mathbf{E}\|XZ\|^2 \ge \theta^2 \mathbf{E}\|Z\|^2.
	$$
	For finding $\mathbf{E}\|XZ\|^2$ we have:
	$$
	\begin{aligned}
		\mathbf{E}\|XZ\|_2^2 &= \frac{1}{s^2}\sum_{k_1\ne k_2 = 1}^{s}\mathbf{E}\langle XH_{k_1},XH_{k_2}\rangle + \frac{1}{s^2}\sum_{k=1}^{s}\mathbf{E}\|XH_k\|^2\\
		&= (1-\frac{1}{s})\|X\beta\|_2^2 + \frac{1}{s}\mathbf{E}\|XH\|_2^2.
	\end{aligned}
	$$
	
	For the second part:
	\begin{equation}
		\begin{aligned}
			\mathbf{E}\|XH\|_2^2 & = \sum_{j=1}^{m}\left(\frac{W}{w_j}\right)^2 \mathbf{E}\left[\|X\tilde{H}\|_2^2\big|G = j\right]\cdot \frac{w_j}{W}\\
			&\le \sum_{j=1}^{m}\left(\frac{W}{w_j}\right)\mathbf{E}\left[\|\tilde{H}\|_2^2\big|G = j\right]\\
			&= \sum_{j=1}^{m}\left(\frac{W}{w_j}\right)\left[(1-\frac{1}{s_0})\|\beta_{g_j}\|_2^2 + \frac{1}{s_0}\|\beta_{g_j}\|_1^2\right]\\
			&\le \frac{1}{s_0}\sum_{j=1}^{m}\left(\frac{W}{w_j}\right)\left[\sqrt{s_0}\|\beta_{g_j}\|_2 + \|\beta_{g_j}\|_1\right]^2\\
			& = \frac{1}{s_0}\sum_{j=1}^m \left(\frac{W}{w_j}\right) w_j^2 = \frac{1}{s_0}W^2.
		\end{aligned}
	\end{equation}
	The first inequality holds because $\tilde{H}$ is a $s_0$ sparse vector over a single group, so according to the sparse group normalization condition \ref{sgnorm}, we have $\|X\tilde{H}\|_2 \le \|\tilde{H}\|_2$. So that :
	\begin{equation}\label{XZ}
		\mathbf{E}\|XZ\|_2^2 \le (1-\frac{1}{s})\|X\beta\|_2^2 + \frac{1}{ss_0}W^2.
	\end{equation}
	Similarly we derive:
	\begin{equation}\label{Z}
		\mathbf{E}\|Z\|_2^2 = (1-\frac{1}{s})\|\beta\|_2^2 + \frac{1}{ss_0}W\sum_{j=1}^{m}\frac{1}{w_j}\left[(s_0-1)\|\beta_{g_j}\|_2^2+\|\beta_{g_j} \|_1^2\right].
	\end{equation}
	Combining \eqref{XZ},\eqref{Z} yields:
	$$
	\|X\beta\|_2^2 \ge \theta^2\|\beta\|_2^2 - \frac{1}{s_0(s-1)}W^2.
	$$
	Thus when $\beta$ satisfies $W^2 \le \frac{s_0(s-1)}{2}\theta^2 \|\beta\|_2^2$, we have $\|X\beta\|_2^2 \ge \frac{\theta^2}{2}\|\beta\|_2^2$. Thus when $X$ satisfies the double sparse RE condition $DSRE(s,s_0,\theta)$, we can let $\tilde{\theta} = \frac{\theta}{\sqrt{2}}$ and $\tilde{s} = \frac{(s-1)\theta^2}{2(2+c_0)^2}$, then $X$ satisfies the strong sparse group RE condition $SSGRE(\tilde{s},s_0,c_0,\tilde{\theta})$.
\end{proof}
\subsection{Proof of Theorem \ref{thm:smball}}
Here we first derive the VC-dimension of the class $\mathcal{F}(s, s_0) = \{ f(x) = x^\top \beta:\beta \in \mathcal{DS}(s, s_0)\}$, then apply the result of corollary 2.5 of \cite{lecue2017sparse}, which derives Theorem \ref{thm:smball} directly.

For brevity, $V$ denotes the VC-dimension of $\mathcal{F}(s, s_0)$.
By the definition of VC-dimension, For any fixed $ss_0$-dimension, the $VC$ of the corresponding set of $ss_0$-dimensional linear classifiers is known to be $ss_0$. 

Then, by Sauer's lemma (e.g., \cite{wainwright2019high}, Proposition 4.18) the maximal number of different labelling of $V$ points in $R ^{d_0}$ that such set of classifiers can produce is upper bounded by $\sum_{k=0}^{ss_0}\left(\begin{array}{l}V \\ k\end{array}\right)$. So considering a union of $\binom{m}{s}\binom{sd}{ss_0}$'s $ss_0$-dimensional linear classification, we have
we have 
\begin{align*}
	2^V \leq& \binom{m}{s}\binom{sd}{ss_0}\sum_{i=0}^{ss_0}\binom{V}{i}\\
        \leq& (\frac{em}{s})^s(\frac{ed}{s_0})^{ss_0}(\frac{eV}{ss_0})^{ss_0}.
\end{align*}
Denote $Z=\frac{V}{ss_0}$. By some simple algebras, we have 
\begin{align}\label{vc}
	2^Z \leq (\frac{em}{s})^s(\frac{ed}{s_0})^{ss_0}eZ.
\end{align}
We need to find the largest integer that satifies \eqref{vc}.
To guarantee this, it is sufficient to choose $Z\leq 2(\frac{1}{s_0}\log\frac{em}{s}+\log\frac{ed}{s_0})$ as an upper bound of $Z$.
Consequently, we derive the upper bound $V \leq 2(s\log\frac{em}{s}+ss_0\log\frac{ed}{s_0})$.
\subsection{Proof of weak moment normalization condition}
\begin{lemma}[Lemma 2.8 of \cite{lecue2017sparse}]\label{lecue:norm}
	There exists an absolute constant $c_0$ for which the following holds. Let $Z$ be a mean-zero random variable, and let $Z_1, Z_2,\cdots,Z_n$ be $n$ independent copies of $Z$. Let $q_0 \ge 2$ and assume that there exist $\kappa_1 > 0$ and $\alpha > \frac{1}{2}$.
	for which $\|Z\|_{L_q} \le \kappa_1 q^{\alpha},~\forall 2\le q \le q_0$. If $n \ge q_0^{\max(2\alpha-1,1)}$, then for every $2 \le q\le q_0$, we have
	$$
	\left\|\frac{1}{\sqrt{n}}\sum_{i=1}^{n}Z_i\right\|_{L_q} \le c_1(\alpha)\kappa_1\sqrt{q}, c_1(\alpha) = \exp(2\alpha -1).
	$$
\end{lemma}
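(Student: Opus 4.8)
The plan is to prove the bound by reducing the $L_q$-norm of the centred sum to the $L_{q/2}$-norm of the sum of squares and then iterating this reduction dyadically in the moment index. First I would symmetrize: writing $\epsilon_1,\dots,\epsilon_n$ for independent Rademacher signs, the mean-zero assumption gives $\|\sum_i Z_i\|_{L_q}\le 2\|\sum_i\epsilon_i Z_i\|_{L_q}$. Conditioning on $(Z_i)$ and applying Khintchine's inequality with its sharp constant of order $\sqrt q$ (Haagerup), one gets $\|\sum_i\epsilon_i Z_i\|_{L_q}\le\sqrt{q-1}\,\|(\sum_i Z_i^2)^{1/2}\|_{L_q}=\sqrt{q-1}\,\|\sum_i Z_i^2\|_{L_{q/2}}^{1/2}$. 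Dividing by $\sqrt n$ yields
\[
\Big\|\tfrac1{\sqrt n}\sum_i Z_i\Big\|_{L_q}^2\le\frac{4(q-1)}{n}\,\Big\|\sum_i Z_i^2\Big\|_{L_{q/2}} .
\]
This single step already isolates the factor $q$ that will become the target $\sqrt q$, provided the sum of squares can be shown to concentrate around its mean $n\mathbf E Z^2$.

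The concentration is where the recursion enters. Writing $W\coloneqq Z^2-\mathbf E Z^2$, a mean-zero variable, I would decompose $\sum_i Z_i^2=n\mathbf E Z^2+\sum_i W_i$ and bound
\[
\Big\|\sum_i Z_i^2\Big\|_{L_{q/2}}\le n\,\mathbf E Z^2+\sqrt n\,\Big\|\tfrac1{\sqrt n}\sum_i W_i\Big\|_{L_{q/2}} .
\]
The leading term $n\,\mathbf E Z^2=n\|Z\|_{L_2}^2$ produces exactly the variance contribution $4q\|Z\|_{L_2}^2\le 4q\kappa_1^2 4^\alpha$ after substitution, which is of the desired form $\propto q\kappa_1^2$. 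For the fluctuation term I would observe that $W$ inherits a moment-growth hypothesis of the same type: since $\|W\|_{L_r}\le\|Z^2\|_{L_r}+\mathbf E Z^2\le 2\|Z\|_{L_{2r}}^2\le 2^{2\alpha+1}\kappa_1^2 r^{2\alpha}$ for $2\le 2r\le q_0$, the variable $W$ satisfies the hypothesis with exponent $2\alpha$, constant of order $\kappa_1^2$, and moment range halved to $q_0/2$. Applying the reduction of the previous paragraph to $W$ at moment index $q/2$, then to the next centred square at index $q/4$, and so on, generates a dyadic recursion $M_k^2\le 4q_k\,a_k+\tfrac{4q_k}{\sqrt n}M_{k+1}$, where $M_k$ is the normalized-sum $L_{q_k}$-norm of the $k$-th iterated centred square, $q_k=q/2^k$, and $a_k=\mathbf E(W^{(k)})^2$. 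The recursion terminates once $q_k\le 2$, where I would use the exact identity $\|\tfrac1{\sqrt n}\sum_i W^{(k)}_i\|_{L_2}=\|W^{(k)}\|_{L_2}$ valid for i.i.d. mean-zero summands.

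The main obstacle is the final step: unrolling this recursion and showing that the deepest level is the binding one. Each descent multiplies the source term by a factor $(q_k/\sqrt n)^{1/2}$, so the $n^{-1/2}$ factors accumulate across the $\sim\log_2 q$ levels, while the iterated centred squares $W^{(k)}$ have $L_2$-norms growing doubly exponentially (exponent $2^k\alpha$, constant $\kappa_1^{2^k}$). The heart of the argument is that these two effects balance precisely: the contribution of the bottom level, which carries the heavy-tail behaviour, is of order $n^{-1/2}\kappa_1 q^{\alpha}$ up to constants, so it is dominated by the variance term $\sqrt q\,\kappa_1$ exactly when $n\gtrsim q^{2\alpha-1}$ --- this is the origin of the hypothesis $n\ge q_0^{\max(2\alpha-1,1)}$, the extra exponent $1$ covering the regime $\tfrac12<\alpha<1$ where the accumulated constants, rather than the power of $q$, dictate the threshold. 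Summing the unrolled inequality then gives $\|\tfrac1{\sqrt n}\sum_i Z_i\|_{L_q}^2\lesssim q\|Z\|_{L_2}^2$; carefully retaining the sharp Khintchine constant and the per-level factors of $2^\alpha$ through the telescoping product is what yields the stated constant $c_1(\alpha)=\exp(2\alpha-1)$, and is the most delicate bookkeeping in the proof.
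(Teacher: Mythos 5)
The paper contains no proof of this lemma: it is imported verbatim as Lemma~2.8 of \cite{lecue2017sparse}, whose proof rests on sharp moment inequalities for sums of i.i.d.\ variables (Lata{\l}a/Johnson--Schechtman--Zinn type), which charge the few largest summands in $\ell_1$, \emph{without} a $\sqrt q$ factor, and only the remainder with the Gaussian-type $\sqrt q\,\ell_2$ term. Your individual steps are correct: symmetrization, conditional Khintchine with constant $\sqrt{q-1}$, the identity $\|(\sum_i Z_i^2)^{1/2}\|_{L_q}^2=\|\sum_i Z_i^2\|_{L_{q/2}}$, centring the squares, the observation that the admissible moment range halves in lockstep with the index so that $W^{(k)}$ may legitimately be treated at level $q_k=q2^{-k}$, and the $L_2$ termination. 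The genuine gap is your closing claim that ``the two effects balance precisely'' so that the bottom level contributes $n^{-1/2}\kappa_1 q^{\alpha}$. Unrolling your recursion $M_k^2\le 4q_k a_k+\frac{4q_k}{\sqrt n}M_{k+1}$ honestly, the terminal term of $M_0$ carries the multiplier
\[
\prod_{j=0}^{K-1}\Bigl(\tfrac{4q_j}{\sqrt n}\Bigr)^{2^{-j-1}}\asymp\Bigl(\tfrac{q}{\sqrt n}\Bigr)^{1-2^{-K}},
\]
and since $M_K^{2^{-K}}\asymp\kappa_1 q^{\alpha}$ when $2^K\asymp q$, the bottom-level contribution is of order $\kappa_1\,q^{1+\alpha}n^{-1/2}$, not $\kappa_1 q^{\alpha}n^{-1/2}$: the per-level Khintchine factors $\sqrt{q_k}$ accumulate to $\prod_k q_k^{2^{-k-1}}\asymp q$, an extra full factor of $q$ that your sketch silently drops.

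Consequently your scheme closes only under $n\gtrsim q_0^{1+2\alpha}$, strictly stronger than the stated $n\ge q_0^{\max(2\alpha-1,1)}$. A concrete test: take $\alpha=1$ and $q=q_0=n$, which the hypothesis allows. The lemma asserts a bound $\lesssim\kappa_1\sqrt q$, and Lata{\l}a's two-sided formula confirms $\asymp\kappa_1(\sqrt q+q/\sqrt n)=2\kappa_1\sqrt q$; your unrolled bound gives $\asymp\kappa_1 q^{3/2}$. Even the variance cascade alone misbehaves there: the $k$-th variance term is $\asymp\kappa_1\sqrt q\,(4q/\sqrt n)^{1-2^{-k}}q^{2^{-k-1}-1/2}2^{k\alpha}$, which at $n\asymp q$ grows to $\asymp\kappa_1\sqrt q\cdot q^{\alpha}$ by $k\approx\log_2 q$, so no choice of stopping level rescues the threshold. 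The structural reason is that plain Khintchine, $\|\sum_i\epsilon_i a_i\|_{L_q}\le\sqrt q\,(\sum_i a_i^2)^{1/2}$, overpays by $\sqrt q$ on the largest coordinates, and in the heavy-tailed regime the maximum dominates; this $\sqrt q$ loss compounds through the dyadic cascade. To reach the sharp exponent you must replace Khintchine by the two-sided Hitczenko--Montgomery-Smith estimate $\|\sum_i\epsilon_i a_i\|_{L_q}\asymp\sum_{i\le q}a_i^*+\sqrt q\,\bigl(\sum_{i>q}(a_i^*)^2\bigr)^{1/2}$ at each level, or dispense with the cascade and apply Lata{\l}a's moment formula (equivalently a sharp Rosenthal inequality) directly to $\sum_i Z_i$ --- which is how the cited source proves the lemma, and where the exponent $\max(2\alpha-1,1)$ and the constant $e^{2\alpha-1}$ actually originate.
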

\begin{proof}[proof of Theorem \ref{weak}]
	Given any $j \in \mathcal{S}$, we define $Z = (X^*_{j})^2 - \|X^*_{j}\|_{L_2}^2$. So that $E[Z] = 0$ and we have $\|Z\|_{L_q}^q \le \|X_j\|_{2q}^{2q} \le \kappa_1^{2q} (2q)^{2\alpha q},~\forall 2\le q \le q_0$, where $q_0$ will be chosen later. By Lemma \ref{lecue:norm}, if $n\ge q_0^{\max(4\alpha-1,1)}$, we have:
	
	\begin{equation}
		\left\|\frac{1}{n} \sum_{i=1}^n(X^*_{ij})^2\right\|_{L_q} \le 1 + 4\alpha c_1(2\alpha) \kappa_{1}^{2} \sqrt{\frac{q}{n}},~ \forall 2\le q \le q_0.
	\end{equation}
	
	For $\alpha,\kappa_1$ are absolute constants, we obtain the second condition $n \ge [4\alpha c_1(2\alpha) \kappa_{1}^{2}]^2 q \asymp q$, and for any $j$, we have $\left\|\frac{1}{n} \sum_{i=1}^n(X^*_{ij})^2\right\|_{L_q} \le 2$. On the other hand, we use Markov's inequality and union bound by the cardinal $|\mathcal{S}| = m\binom{d}{s_0}$:
	\begin{align*}
		\mathbf{P}\left(\max_{j \in \mathcal{S}}\frac{1}{n}\sum_{i=1}^n(X^*_{ij})^2\ge 2e\right) \le |\mathcal{S}| \left(\frac{\left\|\frac{1}{n}\sum_{i=1}^n(X^*_{ij})^2\right\|_{L_q}}{2e}\right)^q \le \exp\left(\log|\mathcal{S}|-q\right).
	\end{align*}
	
 Therefore, we obtain our conclusion by combining the three conditions above. If there exists constant $C_1 > 1$ that $q_0 \ge q > C_1(\log m + s_0\log\frac{ed}{s_0})$ holds, and sample size $n \ge q_0^{\max(4\alpha-1,1)}$, then the sparse group normalization condition \ref{sgnorm} holds with probability larger than $1-\exp[(C_2-1)(\log m + s_0\log\frac{ed}{s_0})]$.
\end{proof}
\section{proof of sub-Gaussian random design}

\subsection{Preliminaries}

At first, we define $\epsilon$-cover as follows:
\begin{definition}[$\epsilon$-cover]
     Given a set $\mathcal{T}$ and a semimetric $\rho$, there exists a set $\{\theta^1,\theta^2,\cdots,\theta^N\}\subset \mathcal{T}$. For any $\theta \in \mathcal{T}$, there exists some $i \in [N]$ such that $\rho(\theta,\theta^i)\le \epsilon$. We call this set as the $\epsilon$-cover of $\mathcal{T}$.
     The $\epsilon$-covering number $N(\epsilon;\mathcal{T},\rho)$ is the cardinality of the smallest $\epsilon$-cover of $\mathcal{T}$.
\end{definition}
Referring to the steps of \cite{zhou2009restricted}, we need the following lemma from \cite{mendelson2008uniform}, which we give without proof.
\begin{lemma}[\cite{mendelson2008uniform}, Lemma 2.2]
Given $m \ge 1$ and $\epsilon > 0$. There exists an $\epsilon$-cover $\Pi \subseteq B_2^{m}$ of $B_2^m$ with respect to the Euclidean metric such that $B_2^{m} \subseteq (1-\epsilon)^ {-1} conv(\Pi)$ and $|\Pi| \le (1 + 2/\epsilon)^m$. 
Similarly, there exists an $\epsilon$-cover $\Pi' \subseteq S^{m-1}$ of the sphere $S^{m-1}$ such that $|\Pi'| \le (1 + 2/\epsilon)^m$.
\end{lemma}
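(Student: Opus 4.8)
The plan is to prove both assertions by coupling a maximal separated-set construction with a volumetric packing bound, and then to extract the convex-hull containment from a geometric iterated-approximation argument. First I would take $\Pi$ to be a \emph{maximal} $\epsilon$-separated subset of $B_2^m$, i.e.\ a set with $\|\pi-\pi'\|_2 \ge \epsilon$ for all distinct $\pi,\pi' \in \Pi$ to which no further point of $B_2^m$ can be adjoined. Maximality immediately forces $\Pi$ to be an $\epsilon$-cover of $B_2^m$: were there an $x \in B_2^m$ with $\rho(x,\pi) > \epsilon$ for every $\pi \in \Pi$, one could add $x$ to $\Pi$, contradicting maximality. This is the construction that simultaneously controls covering and packing.

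For the cardinality bound I would run the standard volume comparison. The open Euclidean balls $B(\pi,\epsilon/2)$, $\pi \in \Pi$, are pairwise disjoint by the $\epsilon$-separation, and each is contained in $(1+\epsilon/2)B_2^m$ since $\|\pi\|_2 \le 1$. Comparing Lebesgue volumes (the unit-ball volume factor cancels) gives $|\Pi|\,(\epsilon/2)^m \le (1+\epsilon/2)^m$, which rearranges to exactly $|\Pi| \le (1+2/\epsilon)^m$. The sphere statement is handled identically: a maximal $\epsilon$-separated subset $\Pi' \subseteq S^{m-1}$ is an $\epsilon$-cover of $S^{m-1}$, and the same disjoint-balls-inside-$(1+\epsilon/2)B_2^m$ comparison yields $|\Pi'| \le (1+2/\epsilon)^m$.

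The containment $B_2^m \subseteq (1-\epsilon)^{-1}\mathrm{conv}(\Pi)$ is the crux and the step I expect to be the main obstacle. For a fixed $x \in B_2^m$ I would iterate the covering property: choose $\pi_1 \in \Pi$ with $\|x-\pi_1\|_2 \le \epsilon$, then apply the net to the rescaled residual $(x-\pi_1)/\epsilon \in B_2^m$ to obtain $\pi_2$ with $\|x-\pi_1-\epsilon\pi_2\|_2 \le \epsilon^2$, and continue, producing $\pi_k \in \Pi$ with $\bigl\|x - \sum_{j=1}^{k}\epsilon^{j-1}\pi_j\bigr\|_2 \le \epsilon^k$. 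Letting $k \to \infty$ gives $x = \sum_{j\ge 1}\epsilon^{j-1}\pi_j$; since the coefficients $\epsilon^{j-1}$ sum to $(1-\epsilon)^{-1}$, the vector $(1-\epsilon)x = \sum_{j\ge 1}(1-\epsilon)\epsilon^{j-1}\pi_j$ is a convex combination of points of $\Pi$. Finiteness of $\Pi$ makes $\mathrm{conv}(\Pi)$ compact, so the limit lies in $\mathrm{conv}(\Pi)$ and hence $x \in (1-\epsilon)^{-1}\mathrm{conv}(\Pi)$. The delicate points here are verifying that each rescaled residual genuinely lies in $B_2^m$ so the net can be reapplied, and justifying that the infinite convex combination remains inside the (closed, compact) convex hull; once these are in place the geometric series closes the argument.
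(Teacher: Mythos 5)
Your proposal is correct, and in fact it supplies a complete proof where the paper provides none: the paper imports this lemma verbatim from Mendelson et al.\ (2008) and explicitly states it is given \emph{without proof}, since its role there is purely as an external tool for bounding covering numbers of $V_{s_0}$ and $\tilde V_{s_0}$ (Lemma \ref{cover}). Your argument is the standard one for this result: a maximal $\epsilon$-separated set is automatically an $\epsilon$-cover, the disjoint-balls volume comparison gives $|\Pi| \le (1+2/\epsilon)^m$ exactly as you compute, and the containment $B_2^m \subseteq (1-\epsilon)^{-1}\mathrm{conv}(\Pi)$ follows from the geometric-series iteration of the net. The one point you flag as delicate does close properly: the partial sums $\sum_{j=1}^{k}(1-\epsilon)\epsilon^{j-1}\pi_j$ have coefficient mass $1-\epsilon^k < 1$, so one should pass to the normalized points $T_k = (1-\epsilon^k)^{-1}\sum_{j=1}^{k}(1-\epsilon)\epsilon^{j-1}\pi_j \in \mathrm{conv}(\Pi)$, which converge to $(1-\epsilon)x$; compactness of the convex hull of the finite set $\Pi$ then yields $(1-\epsilon)x \in \mathrm{conv}(\Pi)$. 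Likewise, your induction that each rescaled residual $\bigl(x-\sum_{j\le k}\epsilon^{j-1}\pi_j\bigr)/\epsilon^k$ lies in $B_2^m$ is valid precisely because $\Pi$ covers all of $B_2^m$, not merely the sphere. No gaps remain.
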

Note that the cardinality of $\mathcal{S}$ is
$
|\mathcal{S}| = m\cdot\binom{d}{s_0}.
$
We provide the lemma corresponding to the $\epsilon$-cover of $V_{s_0}$ and $\tilde{V}_{s_0}$ as follows
\begin{lemma}\label{cover}
For any given $0< \epsilon < \frac{1}{2}$ and $0 <s_0<d$, there exists an $\epsilon$-cover set $\Pi \subseteq B_2^p$ of $\tilde{V}_{s_0}$ such that $\tilde{V}_{s_0}\subseteq 2 conv(\Pi)$ and $|\Lambda|$ is at most
\begin{equation}\label{cover1}
	\quad |\Pi|\le m\cdot\binom{d}{s_0}\left(\frac{5}{2\epsilon}\right)^ {s_0}.
\end{equation}
Moreover, since $V_{s_0}\subseteq \tilde{V}_{s_0}$, $\Pi$ is also an $\epsilon$-cover set of $V_{s_0}$.
\end{lemma}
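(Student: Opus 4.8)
The plan is to decompose $\tilde{V}_{s_0}$ according to the support of its elements and to build the net one support at a time. Every $v\in\tilde{V}_{s_0}$ has its support contained in some $S\in\mathcal{S}$, so that
\[
\tilde{V}_{s_0}=\bigcup_{S\in\mathcal{S}}E_S,\qquad E_S\triangleq\{v\in\mathbb{R}^p:\mathrm{supp}(v)\subseteq S,\ \|\Sigma^{1/2}v\|_2\le 1\}.
\]
Since the coordinates of $v$ outside $S$ vanish, each slice $E_S$ is, after identifying $v$ with its restriction $v_S\in\mathbb{R}^{s_0}$, the ellipsoid $\{v_S:v_S^{\top}\Sigma_S v_S\le 1\}$, where $\Sigma_S$ is the $s_0\times s_0$ principal submatrix indexed by $S$. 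The first step is therefore to recognize each $E_S$ as a linear image of the Euclidean ball $B_2^{s_0}$: writing $w=\Sigma_S^{1/2}v_S$ gives $E_S=\Sigma_S^{-1/2}B_2^{s_0}$ (the map acting on the $S$-coordinates and zero elsewhere), and under the semimetric $\rho(u,v)=\|\Sigma^{1/2}(u-v)\|_2$ this identification is an isometry onto $(B_2^{s_0},\|\cdot\|_2)$.

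Next I would invoke the quoted covering lemma (Lemma 2.2 of \cite{mendelson2008uniform}) on each of these Euclidean balls. For each $S$ it yields a finite set $\Pi_S^0\subseteq B_2^{s_0}$ with $|\Pi_S^0|\le(1+2/\epsilon)^{s_0}$ that is an $\epsilon$-net of $B_2^{s_0}$ and satisfies $B_2^{s_0}\subseteq(1-\epsilon)^{-1}\mathrm{conv}(\Pi_S^0)$. Pulling this back through $\Sigma_S^{-1/2}$ and embedding into $\mathbb{R}^p$ produces $\Pi_S\subseteq E_S$ which is an $\epsilon$-net of $E_S$ in the semimetric $\rho$ and obeys $E_S\subseteq(1-\epsilon)^{-1}\mathrm{conv}(\Pi_S)$, because a nonsingular linear map sends nets to nets and commutes with the formation of convex hulls. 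Setting $\Pi=\bigcup_{S\in\mathcal{S}}\Pi_S$ then gives the candidate net, and $\Pi\subseteq\bigcup_S E_S=\tilde{V}_{s_0}$ lives inside the relevant unit ball.

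It remains to verify the three asserted properties, which is routine bookkeeping. The net property is inherited slicewise: any $v\in\tilde{V}_{s_0}$ lies in some $E_S$ and is thus within $\epsilon$ of a point of $\Pi_S\subseteq\Pi$. The containment uses $\mathrm{conv}(\Pi_S)\subseteq\mathrm{conv}(\Pi)$ together with the elementary bound $(1-\epsilon)^{-1}\le 2$, valid precisely because $\epsilon<\tfrac12$, giving $\tilde{V}_{s_0}\subseteq(1-\epsilon)^{-1}\mathrm{conv}(\Pi)\subseteq 2\,\mathrm{conv}(\Pi)$. The cardinality bound follows from $|\mathcal{S}|=m\binom{d}{s_0}$ (established in the preliminaries) and the per-slice estimate, since $1+2/\epsilon\le 5/(2\epsilon)$ whenever $\epsilon\le\tfrac12$; hence $|\Pi|\le m\binom{d}{s_0}(1+2/\epsilon)^{s_0}\le m\binom{d}{s_0}\bigl(5/(2\epsilon)\bigr)^{s_0}$. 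The final sentence, that $\Pi$ also covers $V_{s_0}$, is immediate from $V_{s_0}\subseteq\tilde{V}_{s_0}$.

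The only genuinely delicate point is the geometric bookkeeping in the first two steps: the quoted lemma is stated for the round ball $B_2^{s_0}$, whereas each slice $E_S$ carries the ellipsoidal $\Sigma_S$-geometry. One must therefore fix the semimetric (equivalently, the identification $v_S\mapsto\Sigma_S^{1/2}v_S$) so that the $\epsilon$-net property, the convex-hull blow-up, and the membership $\Pi\subseteq B_2^p$ are \emph{simultaneously} preserved; choosing $\rho(u,v)=\|\Sigma^{1/2}(u-v)\|_2$ is exactly what makes all three transfer through the linear map without introducing any dependence on the eigenvalues of $\Sigma_S$. Everything else — the union bound on cardinalities and the two numerical inequalities governed by $\epsilon<\tfrac12$ — is routine.
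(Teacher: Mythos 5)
Your proposal follows the same skeleton as the paper's own proof, which is in fact only a citation: the paper says the argument is that of Lemma 2.3 of Mendelson et al.\ and Lemma B.4 of Zhou, with the number of unit balls replaced by $|\mathcal{S}| = m\binom{d}{s_0}$. Your support-wise decomposition $\tilde{V}_{s_0}=\bigcup_{S\in\mathcal{S}}E_S$, the per-slice use of the quoted Lemma 2.2, and the union bound together with the numerical facts $1+2/\epsilon\le 5/(2\epsilon)$ and $(1-\epsilon)^{-1}\le 2$ for $\epsilon<\tfrac12$ are exactly that argument, written out in full.

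There is, however, one implementation choice that puts your construction at odds with the literal statement. You cover in the \emph{pre-image} geometry: you pull the net on $B_2^{s_0}$ back through $\Sigma_S^{-1/2}$, so your $\Pi_S$ lies in the ellipsoid $E_S\subseteq\tilde{V}_{s_0}$, not in the Euclidean ball $B_2^p$ as the lemma asserts; when $\Sigma_S$ has small eigenvalues, points of $\Pi_S$ have Euclidean norm as large as $\lambda_{\min}(\Sigma_S)^{-1/2}$. The cited references cover in the \emph{image} space instead: one covers $W_S=\Sigma^{1/2}E_S$, which is isometric to a Euclidean ball of dimension at most $s_0$ and sits inside $B_2^p$, so that $\Pi\subseteq B_2^p$ is automatic and the containment reads $\Sigma^{1/2}\tilde{V}_{s_0}\subseteq 2\,\mathrm{conv}(\Pi)$. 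Relatedly, your pullback needs every $\Sigma_S$ to be nonsingular; if some $\Sigma_S$ is singular, $E_S$ is unbounded and \emph{no} finite $\Pi$ can satisfy $\tilde{V}_{s_0}\subseteq 2\,\mathrm{conv}(\Pi)$ as a containment in $\mathbb{R}^p$ — your construction (and the paper's statement) then only make sense after applying $\Sigma^{1/2}$, i.e.\ modulo the null space, which is the semimetric reading you allude to. Neither point is fatal for how the lemma is used: in the proof of Theorem \ref{sgnorm gaussian} one needs only the cardinality bound \eqref{cover1}, the variance bound $\mathbf{E}|\langle v,\Sigma^{1/2}g\rangle|^2=\|\Sigma^{1/2}v\|_2^2\le 1$ for $v\in\Pi$ (which your $\Pi\subseteq\tilde{V}_{s_0}$ gives directly, without the eigenvalue assumption that the $\Pi\subseteq B_2^p$ version requires), and the convex-hull domination of $\sup\langle\cdot,\Sigma^{1/2}g\rangle$, which survives applying $\Sigma^{1/2}$. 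So your argument is sound for the purpose the lemma serves, but to obtain the statement exactly as written you should run your two steps on $\Sigma^{1/2}E_S$ rather than on $E_S$.
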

\begin{proof}
The proof is the same as Lemma 2.3 of \cite{mendelson2008uniform} and Lemma B.4 of \cite{zhou2009restricted}. Just need to replace the number of unit balls according to $|\mathcal{S}|$.
\end{proof}
Finally we need to incorporate the following lemma:
\begin{lemma}[\cite{ledoux1991probability}]
Let $X_1,X_2,\cdots,X_N$ be $p$-dimensional Gaussian random vectors. Then,
$$
\mathbf{E}\max_{i=1,2,\cdots,N}|X_i| \le 3\sqrt{\log N}\max_{i=1,2,\cdots,N}\sqrt{\mathbb{E}\|X_i\|_2^2}.
$$
\end{lemma}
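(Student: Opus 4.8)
The plan is to control the moment generating function (MGF) of each individual norm $\|X_i\|_2$ and then combine the $N$ estimates through the standard Chernoff union bound for a maximum. Write $\sigma_i^2 = \mathbf{E}\|X_i\|_2^2$ and $\sigma = \max_{i\le N}\sigma_i$; the target inequality is $\mathbf{E}\max_i\|X_i\|_2 \le 3\sqrt{\log N}\,\sigma$ (the statement is vacuous for $N=1$, so I read it for $N\ge 2$). The whole argument is a classical maximal inequality, so I do not expect a genuine obstacle; the only step requiring care is producing a clean sub-Gaussian MGF bound for the \emph{norm} $\|X_i\|_2$ whose variance proxy is expressed through $\sigma_i^2=\mathbf{E}\|X_i\|_2^2$.

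First I would fix one vector $X=X_i$ with mean $\mu$ and covariance $\Sigma$, and represent it as $X=\mu+\Sigma^{1/2}g$ with $g\sim\mathcal N(0,I_p)$. The map $g\mapsto\|\mu+\Sigma^{1/2}g\|_2$ is Lipschitz with constant $\|\Sigma^{1/2}\|_{op}=\sqrt{\|\Sigma\|_{op}}$, so the Gaussian Lipschitz-concentration (Borell--TIS) estimate, of exactly the type recorded in \cite{ledoux1991probability}, gives for every $\lambda>0$
$$
\mathbf{E}\exp\!\big(\lambda(\|X\|_2-\mathbf{E}\|X\|_2)\big)\le\exp\!\Big(\tfrac{\lambda^2}{2}\|\Sigma\|_{op}\Big).
$$
The two facts that convert this into a bound in terms of $\sigma_i$ alone are $\mathbf{E}\|X\|_2\le(\mathbf{E}\|X\|_2^2)^{1/2}=\sigma_i$ (Jensen) and $\|\Sigma\|_{op}\le\operatorname{tr}\Sigma\le\mathbf{E}\|X\|_2^2=\sigma_i^2$. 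Together they yield $\mathbf{E}\exp(\lambda\|X_i\|_2)\le\exp(\lambda\sigma_i+\tfrac12\lambda^2\sigma_i^2)\le\exp(\lambda\sigma+\tfrac12\lambda^2\sigma^2)$.

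The remaining step is routine optimization. By Jensen and a union bound over the $N$ vectors,
$$
\exp\!\big(\lambda\,\mathbf{E}\max_i\|X_i\|_2\big)\le\mathbf{E}\max_i\exp(\lambda\|X_i\|_2)\le\sum_{i=1}^N\mathbf{E}\exp(\lambda\|X_i\|_2)\le N\exp\!\big(\lambda\sigma+\tfrac12\lambda^2\sigma^2\big),
$$
so that $\mathbf{E}\max_i\|X_i\|_2\le\lambda^{-1}\log N+\sigma+\tfrac12\lambda\sigma^2$. Choosing $\lambda=\sqrt{2\log N}/\sigma$ balances the two $\log N$ terms and gives $\mathbf{E}\max_i\|X_i\|_2\le\sigma\big(1+\sqrt2\,\sqrt{\log N}\big)$. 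Since $\log N\ge\log 2$ for $N\ge 2$, one checks that $1+\sqrt2\,\sqrt{\log N}\le 3\sqrt{\log N}$, which closes the bound with the stated constant $3$.

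The place where I would concentrate effort is the first display: getting the sub-Gaussian MGF for the norm (as opposed to a single coordinate) and pinning its variance proxy to $\operatorname{tr}\Sigma=\mathbf{E}\|X_i\|_2^2$ through $\|\Sigma\|_{op}\le\operatorname{tr}\Sigma$. If one prefers to avoid invoking Borell--TIS, the identical MGF bound follows from a direct $L_q$-moment estimate for $\|X_i\|_2$ combined with $\mathbf{E}\max_i\|X_i\|_2\le N^{1/q}\max_i(\mathbf{E}\|X_i\|_2^q)^{1/q}$ and optimization over $q$, producing a constant of the same order; either route reaches the claimed inequality.
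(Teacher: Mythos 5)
Your proof is correct. Note first that the paper does not actually prove this lemma: it is stated as an imported result, cited to \cite{ledoux1991probability}, and used as a black box in the sub-Gaussian random design section, so there is no internal proof to compare against. Your argument is the standard self-contained derivation and every step checks out: the Gaussian Lipschitz concentration (Borell--TIS) bound applied to $g\mapsto\|\mu+\Sigma^{1/2}g\|_2$ with Lipschitz constant $\|\Sigma^{1/2}\|_{op}$ is exactly the tool the paper itself invokes elsewhere (cf.\ its proof of Theorem \ref{gauss3}, citing \cite{boucheron2013concentration} and \cite{ledoux1991probability}); the reduction of the variance proxy to $\sigma_i^2=\mathbf{E}\|X_i\|_2^2$ via $\|\Sigma\|_{op}\le\operatorname{tr}\Sigma\le\operatorname{tr}\Sigma+\|\mu\|_2^2=\mathbf{E}\|X_i\|_2^2$ is right and has the nice side effect of covering non-centered Gaussians; and the Chernoff--union--optimization step with $\lambda=\sqrt{2\log N}/\sigma$ gives $\sigma(1+\sqrt{2\log N})$, which is indeed at most $3\sigma\sqrt{\log N}$ once $\sqrt{\log N}\ge 1/(3-\sqrt{2})\approx 0.63$, satisfied for all $N\ge 2$ since $\sqrt{\log 2}\approx 0.83$. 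Your reading that the inequality must be interpreted for $N\ge 2$ is also the correct caveat, as the right-hand side vanishes at $N=1$ while the left-hand side need not; the paper's application (a union over $|\Pi|\ge 2$ covering points) is unaffected.
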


\subsection{Proof of Theorem \ref{sgnorm gaussian}}
\begin{proof}
Let $g$ be a $p$-dimensional random vector with each entry draws from $\mathcal{N}(0, 1)$ independently.
Since $V_{s_0} \subseteq 2conv(\Pi)$ and according to the convexity, we have
$$
\sup_{v \in conv(\Pi)}\langle v, \Sigma^{\frac{1}{2}}g\rangle = \sup_{v \in \Pi}\langle v, \Sigma^{ \frac{1}{2}}g\rangle.
$$
Therefore,
\begin{equation}\label{tildel}
\begin{aligned}
\tilde{\ell}^*(V_{s_0})&\le \mathbf{E}\sup_{v \in 2conv(\Pi)}\langle v, \Sigma^{\frac{1}{2}}g\rangle \\
& = 2\mathbf{E}\sup_{v \in \Pi}\langle v, \Sigma^{\frac{1}{2}}g\rangle\\
&\le 6\sqrt{\log |\Pi|}\max_{S\in \mathcal{S}}\sqrt{\mathbf{E}|\langle v_S, \Sigma^{\frac{1}{2 }}g\rangle|^2}\\
& \le 6\left(\log m + s_0\log \frac{5ed}{s_0}\right)^{\frac{1}{2}}.
\end{aligned}
\end{equation}

We plug \eqref{tildel} into the Lemma \ref{mendelson1}, when satisfying the formula \eqref{nsgnorm}, for $\forall v \in V_{s_0}$, $S$ is recorded as the support set of $v$, there are:
$$
\frac{\|Xv\|_n}{\sqrt{n}\|v\|_2}=\frac{\|Xv\|_n}{\sqrt{n}\|\Sigma^{\frac{1 }{2}}_Sv\|_2}\cdot\frac{\|\Sigma^{\frac{1}{2}}_Sv\|_2}{\|v\|_2}\le (1+\theta )\cdot \frac{1}{(1+\theta)} = 1.
$$
\end{proof}

\subsection{Proof of Theorem \ref{randomwsgre}}
\begin{proof}
According to Lemma \ref{mendelson1}, we need to find a certain parameter space $V$, and derive the Gaussian complexity $\tilde{\ell}^*(V)$. We consider the parameter space as follows:
$$
V_2 = \mathcal{C}_{WSGRE}(s,s_0,c_0) \cap \{v,\|\Sigma^{\frac{1}{2}}v\|_2 = 1\}.
$$

So to define $\tilde{V}_2$, let's assume $\sigma = 1$:
\begin{equation}
\tilde{V}_2 \triangleq\left\{v:v\in R^p:\|v\|_*\le\frac{(2+c_0)}{\theta}\sqrt{\sum_{i =1}^{ss_0}\tilde{\lambda}_i^{2}},c_0 > 0,\|\Sigma^{\frac{1}{2}}u\|_2 \le 1\right\},
\end{equation}
where we can derive
\begin{equation}
\left(\sum_{i=1}^{ss_0}\tilde{\lambda}_i^2\right)^{\frac{1}{2}} =\sqrt{ss_0\log\frac{2ed}{ s_0}+ 2s\log \frac{4em}{s}}.
\end{equation}

Then $V_2\subseteq \tilde{V}_2$. Define $r = \frac{(2+c_0)}{\kappa}\sqrt{ss_0\log\frac{2ed}{s_0}+ 2s\log \frac{4em}{s}}$. Because $\Sigma$ satisfies the sparse group normalization condition, that is, for $\forall S\in \mathcal{S}$ there is $\|\Sigma_{S}\|\le 1$, so according to Lemma \ref{lem42} and Theorem \ref{gauss2}, since $\|v\|_{*} \equiv \sqrt{n}N(v)$, we have:
$$
Med\left(\sup_{v:\|v\|_{*} \le 1}\left|\langle\xi,\Sigma^{\frac{1}{2}} v\rangle \right|\right) \le 4.
$$

\begin{remark}
Combining the conclusions in the Lemma \ref{lem42} and Theorem \ref{gauss2} and the positive homogeneity of $N(u)$, we can obtain:
$$
Med\left(\sup_{v:N(v) \le \frac{1}{\sqrt{n}}}\left|\langle\xi,\frac{1}{\sqrt{n}} Xv\rangle \right|\right) \le 4.
$$

So the proof of median here is exactly the same, only need to replace $\frac{1}{\sqrt{n}}X$ with $\Sigma^{\frac{1}{2}}$ in the corresponding conditions and conclusions.
\end{remark}
Therefore for the parameter space $\tilde{V}_2$, we have:
$$
Med\left(\sup_{v \in \tilde{V}_2}\left|\langle\xi,\Sigma^{\frac{1}{2}} v\rangle \right|\right) \le 4r,
$$

and because there is $\forall v\in \tilde{V}_2\|,\Sigma^{\frac{1}{2}}v\|\le 1$ in $\tilde{V}_2$, so $f(\xi) = \sup_{v:\|v\|_{*} \in \tilde{V}_2}\left|\langle\xi,\Sigma^{\frac{1}{2} } v\rangle \right|$ is a 1-Lipschitz function. According to Lemma A.3 of \cite{bellec2018slope}:

\begin{equation}\label{wsgregauss}
\mathbf{E}\left(\sup_{v\in \tilde{V}_2}\left|\langle\xi,\Sigma^{\frac{1}{2}} v\rangle \right|\right ) \le 4r+\sqrt{\frac{2}{\pi}}.
\end{equation}

Plugging \eqref{wsgregauss} into the Lemma \ref{mendelson1}, we make the conclusion as follows: if there is a constant $c',\theta>0$ such that the sample size $n$ satisfies:
$$
n > c'\alpha^4 \theta^2\left(ss_0\log\frac{2ed}{s_0}+ s\log \frac{4em}{s}\right),
$$
then there exists $\bar{c}$, with probability at least $1 - \exp(-\bar{c}\theta^2n/\alpha^4)$, for all $v \in\mathcal{C}_ {WSGRE}(s,s_0,c_0)\setminus \{0\}$, according to homogeneity we have
$$
\frac{\|Xv\|_n}{\sqrt{n}\|\Sigma^{\frac{1}{2}}v\|_2} \ge (1-\theta),
$$
so that
$$
\frac{\|Xv\|_n}{\sqrt{n}\|v\|_2} \ge (1-\theta)\kappa.
$$
\end{proof}

\subsection{Proof of Corollary \ref{cor1}}
\begin{proof}
Because $\mathcal{C}_{SSGRE}(s,s_0,c_0)\subseteq \mathcal{C}_{WSGRE}(s,s_0,2+c_0)$, so we can immediately get the Gaussian complexity on $\mathcal{C }_{SSGRE}(s,s_0,c_0) \cap \{u:\|\Sigma^{\frac{1}{2}}u \|_2= 1\}$. The rest of the proof is exactly the same as the Theorem \ref{randomwsgre}.
\end{proof}
\end{appendix}

\bibliographystyle{unsrtnat}
\bibliography{reference}

\end{document}